\documentclass{amsproc}
\setcounter{page}{1}

\usepackage{amsmath,amssymb,latexsym,amsfonts,cite,hyperref}
\usepackage[all]{xy}
\usepackage{amsthm}
\usepackage{enumerate}
\usepackage{color}
\usepackage{graphicx}
\usepackage{fullpage}

\hypersetup{unicode=false, pdftoolbar=true, pdfmenubar=true, pdffitwindow=false, pdfstartview={FitH}, pdfnewwindow=true, colorlinks=false, linkcolor=blue, citecolor=green}

\renewcommand{\O}{\mathcal O}

\newtheorem{theorem}{Theorem}[section]
\newtheorem{definition/proposition}[theorem]{Definition/Proposition}
\newtheorem{definition}[theorem]{Definition}
\newtheorem{proposition}[theorem]{Proposition}
\newtheorem{conjecture}[theorem]{Conjecture}
\newtheorem{corollary}[theorem]{Corollary}

\newtheorem{lemma}[theorem]{Lemma}
\numberwithin{equation}{section}
\theoremstyle{definition}

\addtolength{\topmargin}{-.18in}
\addtolength{\textheight}{.25in}

\title[Quantum Affine Schubert Cells]{Quantum Affine Schubert Cells and FRT-Bialgebras: The $E_6^{(1)}$ Case}
\author{Garrett Johnson and Christopher Nowlin}
\email{gwjohns3@ncsu.edu\\ cnowlin@math.ucsb.edu}
\subjclass[2000]{16T20; 17B37}
\keywords{quantum algebras, FRT-bialgebras, quantum Schubert cells}

\begin{document}

\begin{abstract} De Concini, Kac, and Procesi defined a family of subalgebras ${\mathcal U}_q^+[w]\subseteq{\mathcal U}_q(\mathfrak{g})$ associated to elements $w$ in the Weyl group of a simple Lie algebra $\mathfrak{g}$. These algebras are called quantum Schubert cell algebras. We show that, up to a mild cocycle twist, quotients of certain quantum Schubert cell algebras of types $E_6$ and $E_6^{(1)}$ map isomorphically onto distinguished subalgebras of the Faddeev-Reshetikhin-Takhtajan universal bialgebra associated to the braiding on the quantum half-spin representation of ${\mathcal U}_q(\mathfrak{so}_{10})$. We identify the quotients as those obtained by factoring out the quantum Schubert cell algebras by ideals generated by certain submodules  with respect to the adjoint action of ${\mathcal U}_q(\mathfrak{so}_{10})$.
 \end{abstract}

\maketitle

\section{Introduction}

In a seminal ICM address, Drinfeld \cite{Dr} introduced the term ``quantum group'' and sparked a flurry of research which continues to this day.  Quantum groups are sometimes defined as Hopf algebras which are neither commutative nor cocommutative, but the phrase has been used loosely to describe a large class of related algebras which can be thought of as deformations of classical algebraic structures.  Two important classes of quantum groups are quantized enveloping algebras and quantized coordinate rings, considered as deformations of universal enveloping algebras of Lie algebras and coordinate rings of algebraic groups, respectively.

The original path to constructing the quantized coordinate ring $\mathcal{O}_q(G)$ for a reductive algebraic group $G$ is through a universal bialgebra construction due to Faddeev, Reshetikhin, and Takhtajan  \cite{FRT88}.  Let $\mathfrak{g}=\operatorname{Lie}(G)$ and denote by $\mathcal{U}_q(\mathfrak{g})$ the corresponding quantized enveloping algebra, where $q$ is a nonzero deformation parameter in the base field and not a root of unity.  To each finite-dimensional representation $V$ of ${\mathcal U}_q(\mathfrak{g})$, we associate a universal $R$-matrix $\widehat{R}$, a linear endomorphism of $V\otimes V$ satisfying the quantum Yang-Baxter equation.  There is a unique bialgebra $\mathcal A$ with $V$ as a comodule, universal with respect to the property that $\widehat{R}:V\otimes V\to V\otimes V$ is a map of ${\mathcal A}$-comodules. The bialgebra ${\mathcal A}$ has a standard presentation: if $\{v_1,...,v_n\}$ is a basis of $V$, then the standard generators of ${\mathcal A}$ are $X_{ij}$ for $i,j=1,...,n$. Thus, we colloquially refer to $X_{ij}$ as the generator in the $i$-th row and $j$-th column. The bialgebra ${\mathcal A}$ is graded with $\text{deg}(X_{ij})\addtolength{\textheight}{1.75in}=1$ for all $i,j=1,..,n$ and the defining relations are all quadratic, i.e. homogeneous of degree two. The FRT-bialgebra $\mathcal A$ is an intermediate step toward constructing $\O_q(G)$, which is realizable as a quotient of $\mathcal A$.  The most famous and well-studied examples of FRT-bialgebras are the algebras of $n\times n$ quantum matrices, denoted $\mathcal{O}_q(M_n)$.  Setting the quantum determinant of ${\mathcal O}_q(M_n)$ equal to $1\in{\mathcal O}_q(M_n)$ yields the Hopf algebra ${\mathcal O}_q(SL_n)$ (for details see, e.g., \cite[\textsection  9.1-9.2]{KS97}).

Our goal is to relate the FRT-bialgebras to quantum Schubert cells, distinguished subalgebras of quantized enveloping algebras first studied by De Concini, Kac, and Procesi \cite{DKP}. To each element $w$ in the Weyl group of a simple Lie algebra $\mathfrak{g}$, there is a standard construction that produces a set of quantum root vectors in $U_q(\mathfrak{g})$ which forms a Poincare-Birkhoff-Witt (PBW) basis for a quantum Schubert cell subalgebra ${\mathcal U}_q^+[w]$ (see, e.g., \cite[I.6.7]{BG}, \cite[\textsection 9.1.B]{CP}, or \cite[Ch. 37]{L}).  The algebra ${\mathcal U}_q^+[w]$ is a deformation of the universal enveloping algebra ${\mathcal U}(\mathfrak{n}_+\cap w\mathfrak{n}_-)$, where $\mathfrak{n}_{\pm}$ are a pair of opposite nilpotent subalgebras of $\mathfrak{g}$. For this reason, quantum Schubert cells have also been referred as to quantizations of nilpotent Lie algebras. From another perspective,  ${\mathcal U}_q^+[w]$ is a quantization of the coordinate ring of the Schubert cell $B_+w\cdot B_+$ in the full flag variety $G/B_+$ equipped with the standard Poisson structure \cite{GY}.  These algebras have also played an important role in recent years in the context of quantum cluster algebras \cite{GLS} and braided symmetric algebras \cite{BZ},\cite{Z}.

From a ring-theoretic perspective, quantum Schubert cell algebras have been of particular interest in an effort to develop a more general framework for studying iterated skew-polynomial rings. The algebras ${\mathcal U}_q^+[w]$ are noetherian and admit natural rational actions of algebraic tori $H$ by algebra automorphisms. Hence, the prime spectra of ${\mathcal U}_q^+[w]$ are partitioned into Goodearl-Letzter strata indexed by $H$-prime ideals. Each stratum is isomorphic to the prime spectrum of a Laurent polynomial ring \cite{GL}.  A description of the $H$-prime ideals of ${\mathcal U}_q^+[w]$ is given in \cite{Y}, where it was shown that the posets of $H$-primes, ordered under inclusion, are isomorphic to the Bruhat intervals $W^{\leq w}$. Finite generating sets for the $H$-prime ideals are also given in \cite{Y} in terms of Demazure modules. Formulas for the dimensions of the corresponding Goodearl-Letzter strata are given in \cite{BKL}, \cite{Y3}. 

In this paper, quantum Schubert cell algebras that have been twisted by a $2$-cocycle play an important role in the main results of Section \ref{section_mainresults}. Cocycle-twisted (or multiparameter) quantum Schubert cell algebras are included in a large family of algebras that extend the Artin-Schelter-Tate algebras of multiparameter quantum matrices \cite{AST91}. Several ring-theoretic results concerning these algebras have been obtained in \cite{Y2}. For instance, in \cite{Y2} a description of the prime spectra of multiparameter quantum Schubert cell algebras is given as well as formulas for the dimensions of the corresponding Goodearl-Letzter strata.

This paper can be considered a sequel to the authors' previous paper \cite{JN} where it was shown that, up to a mild cocycle twist, certain quantum affine Schubert cell algebras of types $A_n^{(1)}$ and $D_n^{(1)}$, or quotients thereof, map isomorphically onto certain distinguished subalgebras of types $A_{n-1}$ and $D_{n-1}$ FRT-bialgebras respectively.  In this paper, we turn our attention to the smallest of the exceptional types having a cominiscule root, namely the $E_6^{(1)}$ case, and prove an analogous result. In a forthcoming paper, we will cover the $E_7^{(1)}$ case.

To recall the details in \cite{JN}, we start with an extremal cominiscule root $\alpha$ in the root systems of types $A_n$ or $D_n$. We then consider the parabolic elements $w$ in the associated Weyl groups $W$ such that the sets of radical roots of $w$ are precisely $\{\beta\in\Phi_+ :\beta-\alpha\in Q_+\}$, where $\Phi_+$ is the full set of positive roots of type $A_n$ or $D_n$, and $Q_+=\mathbb{Z}_{\geq 0}\Phi_+$ is the corresponding positive root lattice. In these examples the corresponding quantum Schubert cell algebras ${\mathcal U}_q^+[w]$ are isomorphic to the well-studied quantum algebras known respectively as the $n$-dimensional quantum plane in the type $A_n$ case and even-dimensional quantum Euclidean space for the type $D_n$ case. Since the algebras ${\mathcal U}_q^+[w]$ are constructed from a cominiscule root, it follows that they are deformations of abelian nilradicals. Hence, the defining relations of ${\mathcal U}_q^+[w]$ are quadratic. Setting the deformation parameter $q$ equal to $1$ in these examples yields polynomial rings in commuting variables.

Next, we associate to $w$ an element $\widehat{w}$ in the corresponding affine Weyl group $\widehat W$ so that the set of radical roots of $\widehat{w}$ is precisely $\{\beta :\beta-\alpha\in Q_+\}\cup\{\beta +\delta : \beta-\alpha\in Q_+\}$, where $\delta$ is the null root in the corresponding affine root system. In \cite[Theorems 4.4 and 6.4]{JN}, we prove that the quantum affine Schubert cell algebras ${\mathcal U}_q^+[\widehat{w}]$ can be constructed via a quantized version of the semi-direct product Levi decomposition $\mathfrak{p} = \mathfrak{l}\ltimes rad(\mathfrak{p})$, where $\mathfrak{p}$ is the maximal parabolic subalgebra of $\mathfrak{sl}_{n+1}$ or $\mathfrak{so}_{2n}$ obtained by deleting the negative root $-\alpha$.

Finally, we let ${\mathcal A}$ denote the FRT-bialgebra associated to the braiding on the vector representation of ${\mathcal U}_q([\mathfrak{l},\mathfrak{l}])$, and let $T$ denote the subalgebra of ${\mathcal A}$ generated by a certain pair of ``rows" (for details, see the statement before Propostion 5.1 in \cite{JN}). Let $\left({\mathcal U}_q^+[\widehat{w}]\right)^\prime$ denote the algebra obtained from ${\mathcal U}_q^+[\widehat{w}]$ by twisting by a cocycle (for an explicit formula for the cocycle used, see the paragraph before Thm 5.2 in \cite{JN}). The cocycle-twisted algebra $\left({\mathcal U}_q^+[\widehat{w}]\right)^\prime$ is almost exactly like its untwisted counterpart except an extra power of $q$ is inserted into some of its defining relations. In the type $D_n^{(1)}$ case, we showed that there is a surjective algebra homomorphism $\Psi:\left({\mathcal U}_q^+[\widehat{w}]\right)^\prime\to T$ \cite[Prop. 5.1]{JN}. Furthermore, we identity the kernel of $\Psi$ as the ideal generated by elements which we labeled $\Omega_1$, $\Omega_2$, and $\Upsilon$. Although not explicitly mentioned, $\Omega_1$, $\Omega_2$, and $\Upsilon$ are certain invariants with respect to the adjoint action of ${\mathcal U}_q([\mathfrak{l},\mathfrak{l}])$ on ${\mathcal U}_q^+[\widehat{w}]$. For the type $A_n^{(1)}$ case, we simply have the cocycle-twisted quantum affine Schubert cell algebras mapping isomorphically onto $T$ \cite[Thm. 6.5]{JN}.

At present, we turn our attention to the type $E_6^{(1)}$ case.  Let $\mathfrak{e}_6$ denote the Lie algebra of exceptional type $E_6$, and let ${\bf I}$ be an index set of simple roots of $\mathfrak{e}_6$ (refer to Figure \ref{Dynkin diagram} for the numbering used throughout this paper). Up to an automorphism of the Dynkin diagram of $\mathfrak{e}_6$, there is one cominiscule root $\alpha_1$. The parabolic subalgebra $\mathfrak{p}\subseteq\mathfrak{e}_6$ associated to $\alpha_1$ decomposes $\mathfrak{p}=\mathfrak{l}\ltimes rad(\mathfrak{p})$ into a Levi subalgebra $\mathfrak{l}$ and a $16$-dimensional abelian nilradical $rad(\mathfrak{p})$.  We let ${\mathcal B}$ denote the set of even cardinality subsets of $\{1,...,5\}$ and index the quantum root vectors of ${\mathcal U}_q^+[w]$ naturally by elements of ${\mathcal B}$. For instance, the highest root vector gets the label $Y_\emptyset\in{\mathcal U}_q^+[w]$. The algebra ${\mathcal U}_q^+[w]$ is a left module algebra over ${\mathcal U}_q([\mathfrak{l},\mathfrak{l}])\cong{\mathcal U}_q(\mathfrak{so}_{10})$. We use the notation ${\bf I}^\prime=\{2,...,6\}$ as an index set for the simple roots of $\mathfrak{so}_{10}$. The algebra ${\mathcal U}_q^+[w]$ is a $\mathbb{Z}\{\varpi_i\}_{i\in{\bf I}^\prime}$-graded module, where the $\varpi_i$'s denote the fundamental weights. We refer to a nonzero element $x\in{\mathcal U}_q^+[w]$ as a highest weight vector of weight $\lambda\in\mathbb{Z}\{\varpi_i\}_{i\in{\bf I}^\prime}$ if $x$ is homogeneous of degree $\lambda$ and is annihilated by the positive root vectors of ${\mathcal U}_q(\mathfrak{l},\mathfrak{l}])$. For instance, the highest root vector $Y_\emptyset$ is a highest weight vector of weight $\varpi_2$, and
\[ \Theta:=Y_{[1234]}Y_{\emptyset}-qY_{[34]}Y_{[12]}+q^2Y_{[24]}Y_{[13]}-q^3Y_{[23]}Y_{[14]}\in{\mathcal U}_q^+[w]
\]
is a highest weight vector of weight $\varpi_6$. In Section \ref{adjoint action section}, we prove the following.

\begin{theorem}
As a ${\mathcal U}_q(\mathfrak{so}_{10})$-module, ${\mathcal U}_q^+[w]$ decomposes into a direct sum of submodules as follows:
\begin{equation}
{\mathcal U}_q^+[w] = \bigoplus_{m,n\geq 0} {\mathcal U}_q(\mathfrak{so}_{10}).(Y_{\emptyset}^m\Theta^n).
\end{equation}
Moreover, for every $m,n\geq 0$, ${\mathcal U}_q(\mathfrak{so}_{10}).(Y_{\emptyset}^m\Theta^n)$ is a finite-dimensional irreducible ${\mathcal U}_q(\mathfrak{so}_{10})$-module of highest weight $m\varpi_2+n\varpi_6$.
\end{theorem}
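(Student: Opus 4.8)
The plan is to establish the decomposition by combining a Hilbert-series/character comparison with the structure of $\mathcal{U}_q^+[w]$ as a $\mathbb{Z}\{\varpi_i\}$-graded module. First I would recall that, since $rad(\mathfrak{p})$ is a $16$-dimensional abelian nilradical, the algebra $\mathcal{U}_q^+[w]$ is a quantum polynomial ring: it has a PBW basis of ordered monomials in the $16$ quantum root vectors $Y_S$, $S\in\mathcal{B}$, with quadratic straightening relations, so its graded Hilbert series (in the total-degree grading) is that of a polynomial ring in $16$ variables. Under the $\mathcal{U}_q(\mathfrak{so}_{10})$-module algebra structure, the degree-$d$ component is a $\mathcal{U}_q(\mathfrak{so}_{10})$-submodule, and I would identify the degree-$1$ component $\bigoplus_S \Bbbk Y_S$ with the $16$-dimensional half-spin representation $V(\varpi_2)$ of $\mathcal{U}_q(\mathfrak{so}_{10})$ — this is forced by the weight $Y_\emptyset\mapsto\varpi_2$ together with the known weight structure of the PBW generators coming from the radical roots of $w$.

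Next I would run a classical comparison. The symmetric algebra $S(V(\varpi_2))$ of the $16$-dimensional half-spin module of $\mathfrak{so}_{10}$ has the well-known multiplicity-free decomposition $\bigoplus_{m,n\geq 0} V(m\varpi_2+n\varpi_6)$ as an $\mathfrak{so}_{10}$-module (this is the classical statement that $SO_{10}$-orbit closure / the cone over the spinor variety in $\mathbb{P}^{15}$ is arithmetically Cohen--Macaulay with this coordinate-ring decomposition; equivalently it follows from a $(\mathfrak{gl}_1,\mathfrak{so}_{10})$-type Howe duality). Because $q$ is not a root of unity, $\mathcal{U}_q(\mathfrak{so}_{10})$ is semisimple on the relevant category and $\mathcal{U}_q^+[w]$ has the same graded dimension as $S(V(\varpi_2))$ (same PBW Hilbert series), so as a $\mathcal{U}_q(\mathfrak{so}_{10})$-module $\mathcal{U}_q^+[w]$ must be $\bigoplus_{m,n\geq 0} V(m\varpi_2+n\varpi_6)$ — the characters match degree by degree once one checks that $V(m\varpi_2+n\varpi_6)$ sits in total degree $m+2n$, which is exactly the degree of $Y_\emptyset^m\Theta^n$.

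It then remains to show each isotypic piece $V(m\varpi_2+n\varpi_6)$ is actually \emph{generated}, under the $\mathcal{U}_q(\mathfrak{so}_{10})$-action, by the explicit vector $Y_\emptyset^m\Theta^n$. For this I would verify that $Y_\emptyset^m\Theta^n$ is a highest weight vector of weight $m\varpi_2+n\varpi_6$: $Y_\emptyset$ is highest weight of weight $\varpi_2$ and $\Theta$ is highest weight of weight $\varpi_6$ (given in the excerpt), so it suffices to check that the product of two highest weight vectors is again highest weight — i.e.\ that the positive root vectors $E_i$, $i\in\mathbf{I}'$, annihilate $Y_\emptyset^m\Theta^n$. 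Using the module-algebra axiom $E_i.(xy)=(E_i.x)(K_i.y)+x(E_i.y)$ and the fact that $E_i.Y_\emptyset=E_i.\Theta=0$ while $K_i$ acts on $Y_\emptyset,\Theta$ by the appropriate power of $q$, this is an immediate induction on $m+n$. Since the $q$-deformed module $V(m\varpi_2+n\varpi_6)$ is irreducible (non-root-of-unity $q$) and cyclic on its highest weight vector, $\mathcal{U}_q(\mathfrak{so}_{10}).(Y_\emptyset^m\Theta^n)\cong V(m\varpi_2+n\varpi_6)$; a dimension count against the character computation of the previous paragraph then forces these submodules to be linearly independent and to exhaust $\mathcal{U}_q^+[w]$, giving the direct-sum decomposition.

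The main obstacle I anticipate is the classical input: pinning down precisely that $S(V(\varpi_2))\cong\bigoplus_{m,n}V(m\varpi_2+n\varpi_6)$ with $V(m\varpi_2+n\varpi_6)$ occurring in degree $m+2n$, and matching this cleanly against the PBW Hilbert series of $\mathcal{U}_q^+[w]$. One has to be careful that $\Theta$ is genuinely a new generator (not already in $\mathcal{U}_q(\mathfrak{so}_{10}).Y_\emptyset^2$), which is exactly the statement that the degree-$2$ component contains $V(2\varpi_2)\oplus V(\varpi_6)$ and not $V(2\varpi_2)$ alone; this can be read off either from the classical plethysm $S^2(V(\varpi_2))=V(2\varpi_2)\oplus V(\varpi_6)$ for $\mathfrak{so}_{10}$ or directly from the quadratic straightening relations among the $Y_S$ together with the explicit form of $\Theta$. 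Everything else is a formal consequence of semisimplicity and the module-algebra structure.
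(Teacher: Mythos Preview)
Your proposal is correct and follows essentially the same architecture as the paper's proof: show that $Y_\emptyset^m\Theta^n$ is a highest weight vector (via the module-algebra axiom, exactly as you outline), invoke irreducibility at generic $q$, and then run a degree-by-degree dimension count against the Hilbert series $\sum_d\binom{15+d}{15}t^d$ of a polynomial ring in $16$ variables.

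The one substantive difference is in how the dimension identity is supplied. You import the classical plethysm $S(V(\varpi_2))\cong\bigoplus_{m,n\ge 0}V(m\varpi_2+n\varpi_6)$ for $\mathfrak{so}_{10}$ (spinor variety / Howe duality) and then transfer it across the PBW character match. The paper instead stays self-contained: it writes down the Weyl dimension formula for $V(m\varpi_2+n\varpi_6)$ explicitly as $\frac{m+3}{105}\binom{m+5}{5}\binom{n+4}{4}\binom{n+m+7}{4}$ and proves directly, as a separate lemma, the combinatorial identity
\[
\sum_{\substack{m,n\ge 0\\ m+2n=d}} \frac{m+3}{105}\binom{m+5}{5}\binom{n+4}{4}\binom{n+m+7}{4}=\binom{15+d}{15},
\]
by observing both sides are degree-$15$ polynomials in $d$ and checking $16$ values. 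Your route is conceptually cleaner and explains \emph{why} the identity holds; the paper's route avoids any external representation-theoretic input (and in particular sidesteps any worry about the base field, since the identity is purely numerical). Either way the argument closes the same way, so what you have is fine---just be aware that the ``main obstacle'' you flag is exactly the content the paper isolates and proves by hand.
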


We give a conjecture for the decomposition of the quantum affine Schubert cell ${\mathcal U}_q^+[\widehat{w}]$ as a left ${\mathcal U}_q(\mathfrak{so}_{10})$-module.

\begin{conjecture}
The  vectors $\Omega_1,...,\Omega_{13}$ listed in Table \ref{HWVs table} are highest weight vectors for the adjoint action of ${\mathcal U}_q(\mathfrak{so}_{10})$ on ${\mathcal U}_q^+[\widehat{w}]$. As a ${\mathcal U}_q(\mathfrak{so}_{10})$-module, ${\mathcal U}_q^+[\widehat{w}]$ decomposes into a direct sum of finite-dimensional irreducible submodules as follows:
\[ \mathcal{U}_q^+[\widehat{w}] = \bigoplus_{{\bf r}} \mathcal{U}_q(\mathfrak{so}_{10}).(\Omega_1^{r_1}\cdots\Omega_{13}^{r_{13}}),\]
where the sum runs over all $r_1,...,r_{13}\in\mathbb{Z}_{\geq 0}$ such that $r_5r_9=0$.
\end{conjecture}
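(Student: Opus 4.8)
The plan is to prove the conjecture in four stages. \textbf{(I)} First I would verify that each of $\Omega_1,\dots,\Omega_{13}$ is a highest weight vector for the adjoint action of $\mathcal{U}_q(\mathfrak{so}_{10})$ on $\mathcal{U}_q^+[\widehat{w}]$. This is a finite check: from the explicit formulas in Table~\ref{HWVs table} one reads off that each $\Omega_i$ is homogeneous for the $\mathbb{Z}\{\varpi_j\}_{j\in{\bf I}'}$-grading, which gives $\operatorname{wt}(\Omega_i)$, and one confirms that $E_j.\Omega_i=0$ for all $j\in{\bf I}'$. The computation is best organized by the ``affine bidegree'' recording the number of $\delta$-shifted radical root vectors occurring in a term: the vectors living in bidegree $(\ast,0)$ are exactly the finite-type highest weight vectors $Y_\emptyset$ and $\Theta$ already identified in the Theorem, while the remaining $\Omega_i$ are handled using the skew-derivation property of the $E_j$ on products together with their explicit action on the PBW generators, available from the quantized Levi decomposition of \cite[Thms.~4.4 and 6.4]{JN} transported to the $E_6^{(1)}$ case.

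\textbf{(II)} Next I would work out Levendorskii--Soibelman-type straightening relations among the $\Omega_i$: for a fixed convex order and $i<j$, the element $\Omega_j\Omega_i-q^{\,c_{ij}}\Omega_i\Omega_j$ should lie in the span of monomials in $\Omega_1,\dots,\Omega_{j-1}$. The key point is the single incompatibility responsible for the constraint $r_5r_9=0$, namely that $\Omega_5\Omega_9$ is congruent, modulo sums of $\mathcal{U}_q(\mathfrak{so}_{10})$-submodules already generated by other $\Omega$-monomials, to a combination of such monomials, so that it yields no new highest weight vector. Granting this, every ordered monomial $\Omega_1^{r_1}\cdots\Omega_{13}^{r_{13}}$ with $r_5r_9=0$ is again a highest weight vector of weight $\sum_i r_i\operatorname{wt}(\Omega_i)$, since it is built from highest weight vectors by multiplications that intertwine the $E_j$-action up to scalars.

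\textbf{(III)} To obtain irreducibility of $\mathcal{U}_q(\mathfrak{so}_{10}).(\Omega_1^{r_1}\cdots\Omega_{13}^{r_{13}})$ and directness of the sum, I would pass to the associated graded algebra $\operatorname{gr}\mathcal{U}_q^+[\widehat{w}]$ for the PBW filtration attached to a convex order \emph{adapted} to the parabolic decomposition $\mathfrak{p}=\mathfrak{l}\ltimes rad(\mathfrak{p})$ (rather than an arbitrary De~Concini--Kac--Procesi order), so that the Levi action is filtered. The associated graded is then a quantum affine space on the $32$ radical root vectors, still a $\mathcal{U}_q(\mathfrak{so}_{10})$-module algebra, in which the leading term of $\Omega_1^{r_1}\cdots\Omega_{13}^{r_{13}}$ is a monomial generating precisely its Cartan component $V(\sum_i r_i\operatorname{wt}\Omega_i)$; a standard lifting argument then returns this to $\mathcal{U}_q^+[\widehat{w}]$, and directness follows once graded dimensions are matched in stage (IV). Alternatively, one could transport the whole question, via the main results of Section~\ref{section_mainresults} and the controlled effect of the cocycle twist, to the subalgebra $T$ of the FRT-bialgebra $\mathcal{A}$, where the comodule structure may render the bookkeeping more transparent.

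\textbf{(IV)} The heart of the proof is the character identity
\[
\operatorname{ch}\mathcal{U}_q^+[\widehat{w}]=\sum_{{\bf r}:\,r_5r_9=0}\operatorname{ch}V\!\Big(\textstyle\sum_i r_i\operatorname{wt}(\Omega_i)\Big)
\]
of $\mathfrak{so}_{10}$-modules (possibly refined by an auxiliary variable tracking affine degree). The left-hand side is computed from the PBW basis as the product over the $32$ radical roots $\beta$ of $\widehat{w}$ of $1/(1-e^{\bar\beta})$, where $\bar\beta$ is the image of $\beta$ in the $\mathfrak{so}_{10}$-weight lattice; it is manifestly independent of $q$, so it suffices to prove the identity at $q=1$, where $\mathcal{U}_q^+[\widehat{w}]$ degenerates to $\mathcal{U}(\mathfrak{n}_+\cap\widehat{w}\,\mathfrak{n}_-)$ and the adjoint action to the $\mathfrak{so}_{10}$-action. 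This reduces the conjecture to a classical branching statement for the coordinate ring of the cone built from two copies of the $16$-dimensional half-spin module glued along the $\delta$-shift, which I would derive from the known multiplicity-free decomposition of $\mathbb{C}[E_6/P_1]$ (the Cayley plane) together with the $\mathfrak{so}_{10}$ Littlewood--Richardson rule for the extra block, equivalently by producing a standard-monomial basis indexed by the constrained monomials. Since the irreducible characters $\operatorname{ch}V(\lambda)$ are linearly independent, matching characters forces the decomposition, and together with (I)--(III) this finishes the proof. I expect stage (IV) to be the genuine obstacle: pinning down that $r_5r_9=0$ is the \emph{only} constraint, and that every multiplicity is one, is a concrete but delicate representation-theoretic computation; a secondary technical nuisance is ensuring in (III) that the Levi action really descends to the associated graded, which is why the convex order must be chosen compatibly with the parabolic grading.
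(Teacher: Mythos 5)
The first thing to say is that the paper does not prove this statement: it appears verbatim as Conjecture \ref{conj1}, and the authors offer only the remark that ``some evidence'' for part (ii) can be obtained by a dimension count in the style of Theorem \ref{Uqw decomposition}, together with the conjectured dependence relation $\Omega_5\Omega_9=-q^{-6}\Omega_3\Omega_{11}+q^{-2}\Omega_4\Omega_{10}$ explaining the constraint $r_5r_9=0$. So there is no proof in the paper to compare yours against; your write-up must stand on its own, and as it stands it is a program rather than a proof. Your stage (IV) is essentially the authors' own suggested dimension-counting strategy, upgraded from ``evidence'' to a purported proof via the (correct and useful) observation that the PBW character of $\mathcal{U}_q^+[\widehat{w}]$ is independent of $q$. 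But the resulting classical identity --- the $\mathfrak{so}_{10}$-decomposition of the symmetric algebra on \emph{two} copies of the $16$-dimensional half-spin module --- is not the decomposition of $\mathbb{C}[E_6/P_1]$, which governs one copy and is exactly Theorem \ref{Uqw decomposition}; the two-copy branching is a genuinely harder plethysm and you do not derive it. This is the heart of the matter and it is left open.

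Beyond that, two more specific gaps. In stage (II) you assert, but do not exhibit, the single dependence relation that makes $\Omega_5\Omega_9$ redundant; more importantly, directness of the sum requires not only that this one product is redundant but that the constrained ordered monomials are linearly independent in $A[\widehat{w}]$ \emph{and} exhaust it --- precisely part (i) of Conjecture \ref{conj1} --- and your straightening relations, even granted, only give spanning of the subalgebra generated by the $\Omega_i$, not that this subalgebra is all of $A[\widehat{w}]$. In stages (III)--(IV) the phrase ``every multiplicity is one'' is wrong as stated: the decomposition is visibly not multiplicity-free ($\Omega_1$ and $\Omega_2$ both have weight $\varpi_2$, and $\Omega_3,\Omega_4,\Omega_5$ all have weight $\varpi_6$), so matching characters alone cannot force directness; one needs linear independence of the highest weight vectors of equal weight, which is exactly the unproven point above, and your associated-graded argument for it is only sketched (including the claim that a parabolic-adapted convex order exists for $\widehat{w}$ and keeps the Levi action filtered). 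Stage (I) is a finite verification and unobjectionable in principle, but it too is deferred. In short: the architecture is sound and consistent with what the authors hint at, but every load-bearing step --- the highest-weight checks, the dependence relation, the linear independence, and the classical branching identity --- remains to be carried out, so the conjecture is not proved here.
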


The universal FRT-bialgebra ${\mathcal A}$ of interest in this paper is the one obtained from the braiding on the quantum half-spin representation of ${\mathcal U}_q([\mathfrak{l},\mathfrak{l}])\cong{\mathcal U}_q(\mathfrak{so}_{10})$. The bialgebra ${\mathcal A}$ has a standard presentation in terms of a generating set $\{X_{IJ} : I,J\in{\mathcal B}\}$.  For $S\in{\mathcal B}$, let ${\mathcal A}_S$ be the subalgebra of ${\mathcal A}$ generated by $X_{SJ}$ ($J\in{\mathcal B}$).  Our first main result is that each ``row" of ${\mathcal A}$ is isomorphic to a quotient of ${\mathcal U}_q^+[w]$.

\begin{theorem}
For every $S\in{\mathcal B}$, there is a surjective algebra homomorphism  $\psi_S:  {\mathcal U}_q^+[w]\to {\mathcal A}_S$ given by
\begin{align*}
 Y_I  &\longmapsto X_{SI} .
\end{align*}
The kernel of $\psi_S$ is the ideal generated by the $10$-dimensional vector space ${\mathcal U}_q(\mathfrak{so}_{10}).\Theta$.
\end{theorem}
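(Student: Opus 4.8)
The plan is to reduce the statement, for an arbitrary fixed $S\in{\mathcal B}$, to a comparison of quadratic data inside the quantum half‑spin module $V:=\bigoplus_{I\in{\mathcal B}}k\,Y_I\cong V(\varpi_2)$, using its tensor‑square decomposition $V^{\otimes 2}\cong V(2\varpi_2)\oplus V(\varpi_4)\oplus V(\varpi_6)$ (the half‑spin plethysm for $D_5$, with $\mathrm{Sym}^2V\cong V(2\varpi_2)\oplus V(\varpi_6)$ and $\Lambda^2V\cong V(\varpi_4)$), and then to invert the resulting map using a quotient of ${\mathcal A}$ itself rather than any Hilbert‑series or flat‑deformation input.

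First I would pin down the two relevant spaces of quadratic relations. Since ${\mathcal U}_q^+[w]$ is a quadratic ${\mathcal U}_q(\mathfrak{so}_{10})$‑module algebra, its multiplication gives a ${\mathcal U}_q(\mathfrak{so}_{10})$‑equivariant surjection $V^{\otimes 2}\to ({\mathcal U}_q^+[w])_2$; by the first theorem stated above (the decomposition of ${\mathcal U}_q^+[w]$, whose degree‑two part is the $(m,n)=(2,0)$ and $(0,1)$ summands ${\mathcal U}_q(\mathfrak{so}_{10}).Y_\emptyset^2\oplus{\mathcal U}_q(\mathfrak{so}_{10}).\Theta\cong V(2\varpi_2)\oplus V(\varpi_6)$) the kernel of this surjection is the unique submodule $R\subseteq V^{\otimes 2}$ with $R\cong V(\varpi_4)$, and this $R$ is the relation space of ${\mathcal U}_q^+[w]$. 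On the FRT side, writing $\widehat R(v_J\otimes v_L)=\sum_{A,B}\widehat R^{AB}_{JL}\,v_A\otimes v_B$, the defining relations of ${\mathcal A}$ take the form $\sum_{A,B}\widehat R^{AB}_{JL}X_{PA}X_{QB}=\sum_{I,K}\widehat R^{PQ}_{IK}X_{IJ}X_{KL}$. Because $\widehat R$ is ${\mathcal U}_q(\mathfrak{so}_{10})$‑linear it preserves weights, and the weight $2\,\mathrm{wt}(v_S)$ (all five of its $\tfrac12$‑coordinates equal to $\pm 1$) occurs in neither $V(\varpi_4)$ nor $V(\varpi_6)$ and has a one‑dimensional weight space in $V^{\otimes 2}$; hence $v_S\otimes v_S\in V(2\varpi_2)$, it is a $\widehat R$‑eigenvector with the Cartan eigenvalue $\lambda$, and $\widehat R^{PQ}_{SS}=\lambda\,\delta_{PS}\delta_{QS}$. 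Specializing $P=Q=S$ in the FRT relations and using this, I obtain — inside ${\mathcal A}$ — the purely row‑$S$ identities $\sum_{A,B}\widehat R^{AB}_{JL}X_{SA}X_{SB}=\lambda\,X_{SJ}X_{SL}$; equivalently, $v_A\otimes v_B\mapsto X_{SA}X_{SB}$ annihilates $\mathrm{Im}(\widehat R-\lambda\,\mathrm{id})=V(\varpi_4)\oplus V(\varpi_6)$ (the two $\widehat R$‑eigenspaces other than the Cartan one, which are distinct from $\lambda$ since $q$ is not a root of unity).

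Next, since $R\cong V(\varpi_4)\subseteq V(\varpi_4)\oplus V(\varpi_6)$, the defining relations of ${\mathcal U}_q^+[w]$ are killed by $Y_I\mapsto X_{SI}$, so $\psi_S$ is a well‑defined algebra homomorphism; it is surjective because the $X_{SI}$ generate ${\mathcal A}_S$. The copy of $V(\varpi_6)$ annihilated above is exactly the image of ${\mathcal U}_q(\mathfrak{so}_{10}).\Theta\subseteq({\mathcal U}_q^+[w])_2$, so $\ker\psi_S$ already contains $K:=\langle{\mathcal U}_q(\mathfrak{so}_{10}).\Theta\rangle$. For the reverse inclusion, combining the previous paragraph with the definition of $K$ gives ${\mathcal U}_q^+[w]/K=T(V)/\langle V(\varpi_4)\oplus V(\varpi_6)\rangle=:A$, so $\psi_S$ descends to a surjection $\overline\psi_S:A\to{\mathcal A}_S$, $v_I\mapsto X_{SI}$; I claim this has a left inverse. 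Let $\pi_S:{\mathcal A}\to{\mathcal A}/\langle X_{IJ}:I\neq S\rangle=:\overline{\mathcal A}$ be the quotient map. Pushing the FRT relations through $\pi_S$: those with $(P,Q)\neq(S,S)$ become $0=0$ (again using $\widehat R^{PQ}_{SS}=0$ off the diagonal), while those with $P=Q=S$ become $\sum_{A,B}\widehat R^{AB}_{JL}\overline X_{SA}\overline X_{SB}=\lambda\,\overline X_{SJ}\overline X_{SL}$; hence $\overline{\mathcal A}$ is generated by the sixteen elements $\overline X_{SJ}$ modulo exactly the relations $\mathrm{Im}(\widehat R-\lambda\,\mathrm{id})=V(\varpi_4)\oplus V(\varpi_6)$, i.e.\ $\overline{\mathcal A}\cong A$ via $\overline X_{SJ}\leftrightarrow v_J$. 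Restricting $\pi_S$ to the subalgebra ${\mathcal A}_S$ then yields an algebra homomorphism ${\mathcal A}_S\to\overline{\mathcal A}\cong A$ whose composite with $\overline\psi_S$ sends each $v_I$ to $v_I$, hence is the identity of $A$. Therefore $\overline\psi_S$ is injective, so it is an isomorphism, and $\ker\psi_S=K$.

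The part I expect to be most delicate is the explicit handling of the braiding on the quantum half‑spin module underlying the two previous paragraphs: verifying that $\widehat R$ has three distinct eigenvalues on $V(2\varpi_2),V(\varpi_4),V(\varpi_6)$ for $q$ not a root of unity (so that $\mathrm{Im}(\widehat R-\lambda\,\mathrm{id})$ is precisely $V(\varpi_4)\oplus V(\varpi_6)$), checking the two weight‑theoretic facts that drive everything — that $v_S\otimes v_S$ always lands in the Cartan component and that $\widehat R^{PQ}_{SS}$ is diagonal — and reconciling the index conventions of the FRT presentation of ${\mathcal A}$ with the PBW labelling of the quantum root vectors $Y_I$ and the explicit quadratic relations of ${\mathcal U}_q^+[w]$. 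All of this is finite representation‑theoretic bookkeeping, and notably no step requires knowing the Hilbert series of ${\mathcal A}_S$ or invoking an external flat‑deformation result.
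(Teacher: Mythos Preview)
Your argument is correct and takes a genuinely different route from the paper's. The paper proceeds computationally: it first establishes Theorem~\ref{FRT relations1} (a presentation of ${\mathcal A}_S$) by specializing the FRT relations to a single row, splitting into cases according to whether $|[(I,J)]|$ is $1$, $2$, or $8$, and in the size-$8$ case row-reducing an explicit $8\times 8$ matrix built from the $R$-matrix coefficients of Theorem~\ref{R-matrix coefficients}; the corollary then follows by matching the resulting list against the known presentation of ${\mathcal U}_q^+[w]$, the ten ``extra'' relations (one per size-$8$ equivalence class) being exactly the orbit $\mathrm{ad}({\mathcal U}_q(\mathfrak{g}_{{\bf I^\prime}}))\Theta$. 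You instead work directly with the eigenspace decomposition of $\widehat R$ on $V^{\otimes 2}$ and bypass the case analysis entirely. What your approach buys is a self-contained proof of the inclusion $\ker\psi_S\subseteq K$: the quotient $\overline{\mathcal A}={\mathcal A}/\langle X_{IJ}:I\neq S\rangle$ is visibly presented by $\mathrm{Im}(\widehat R-\lambda)$, and restricting $\pi_S$ to ${\mathcal A}_S$ furnishes a left inverse to $\overline\psi_S$; the paper's corresponding step rests on Theorem~\ref{FRT relations1} being a \emph{presentation} of the subalgebra ${\mathcal A}_S$, but the proof there only exhibits relations that hold and does not separately argue that no further relations occur among the row-$S$ generators, so your quotient trick in fact makes that point transparent. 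What the paper's approach buys is the explicit relation list, which feeds directly into the two-row analysis of Theorem~\ref{FRT relations2}. One small index slip: to get the ``purely row-$S$ identities inside ${\mathcal A}$'' from $P=Q=S$ you actually need $\widehat R^{SS}_{IK}=\lambda\,\delta_{IS}\delta_{KS}$ (so that the right-hand side collapses to $\lambda X_{SJ}X_{SL}$), whereas you only state the transpose fact $\widehat R^{PQ}_{SS}=\lambda\,\delta_{PS}\delta_{QS}$; both, however, follow immediately from the one-dimensional weight-space observation you give.
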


Our second main result involves the quantum affine Schubert cell algebra ${\mathcal U}_q^+[\widehat{w}]$. We label the quantum root vectors of  ${\mathcal U}_q^+[\widehat{w}]$ by $Z_I$ and $Z_{I+\delta}$ (for $I\in{\mathcal B}$). Following Artin, Schelter, and Tate \cite{AST91}, we twist the multiplication in ${\mathcal U}_q^+[\widehat{w}]$ by a $2$-cocycle to obtain a new algebra $({\mathcal U}_q^+[\widehat{w}])^\prime$, which has a slightly different multiplicative structure than ${\mathcal U}_q^+[\widehat{w}]$. We give a description of certain pairs of rows of generators of ${\mathcal A}$ in terms of the defining relations of $({\mathcal U}_q^+[\widehat{w}])^\prime$. For $S,T\in{\mathcal B}$ with $|(S\cup T)\backslash (S\cap T)|=2$ and $S<T$ (${\mathcal B}$ inherits a partial ordering from the root lattice),  let ${\mathcal A}_{(S,T)}$ denote the subalgebra of ${\mathcal A}$ generated by $X_{SJ}$, $X_{TJ}$ ($J\in{\mathcal B}$). We prove the following.

\begin{theorem}
 There is a surjective algebra homomorphism  $\psi_{(S,T)}:({\mathcal U}_q^+[\widehat{w}])^\prime \to {\mathcal A}_{(S,T)}$ given by
\begin{align*}
&(Z_I)^\prime \longmapsto X_{SI}, \\
&(Z_{I+\delta})^\prime\longmapsto X_{TI}.
\end{align*}
The kernel of $\psi_{(S,T)}$ is the ideal  generated by three $10$-dimensional vector spaces: $\left({\mathcal U}_q(\mathfrak{so}_{10}).\Omega_j\right)^\prime$ for $j=3,4,5$.
\end{theorem}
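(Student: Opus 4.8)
The plan is to adapt the argument of \cite[Proposition~5.1]{JN}, the new ingredients being the braiding $\widehat{R}$ on the $16$-dimensional half-spin ${\mathcal U}_q(\mathfrak{so}_{10})$-module and the twisted Levendorskii--Soibelman relations of $({\mathcal U}_q^+[\widehat{w}])^{\prime}$, both of which are recorded in Section~\ref{adjoint action section}. First I would check that $\psi_{(S,T)}$ is a well-defined algebra homomorphism by verifying that each defining relation of $({\mathcal U}_q^+[\widehat{w}])^{\prime}$ maps to a relation holding in $\A_{(S,T)}$. These relations split into three families: the relations among the $(Z_I)^{\prime}$, the relations among the $(Z_{I+\delta})^{\prime}$, and the cross relations between the two halves. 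The first two families map to relations among the $X_{SJ}$, resp.\ among the $X_{TJ}$, and hold because $\psi_{(S,T)}$ restricted to the subalgebra generated by the $(Z_I)^{\prime}$, resp.\ the $(Z_{I+\delta})^{\prime}$, coincides with the homomorphism $\psi_S$, resp.\ $\psi_T$, of the preceding theorem (the $2$-cocycle, being a bicharacter on the weight lattice, leaves these ``single-row'' relations unchanged). For the cross relations one reads off the RTT relations $\widehat{R}_{12}\,(X\otimes X)_{12}=(X\otimes X)_{12}\,\widehat{R}_{12}$ of $\A$ restricted to the rows $S$ and $T$; the hypothesis $|(S\cup T)\setminus(S\cap T)|=2$ together with $S<T$ guarantees that $\widehat{R}$ couples exactly these two rows and in the correct order, and the extra powers of $q$ introduced by the twist are precisely those produced by the diagonal part of $\widehat{R}$ on the relevant weight spaces. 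Surjectivity is then immediate, since $\A_{(S,T)}$ is generated by the $X_{SI}=\psi_{(S,T)}((Z_I)^{\prime})$ and the $X_{TI}=\psi_{(S,T)}((Z_{I+\delta})^{\prime})$.

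Next I would show that the three spaces $\big({\mathcal U}_q(\mathfrak{so}_{10}).\Omega_j\big)^{\prime}$ for $j=3,4,5$ lie in $\ker\psi_{(S,T)}$. The map $\psi_{(S,T)}$ is a morphism of ${\mathcal U}_q(\mathfrak{so}_{10})$-module algebras: the source carries the (twisted) adjoint action, the target $\A_{(S,T)}$ inherits a module-algebra structure under which $\operatorname{span}\{X_{SJ}\}_J$ and $\operatorname{span}\{X_{TJ}\}_J$ are each a copy of the half-spin module, and $\psi_{(S,T)}$ is equivariant on generators weight-by-weight, hence everywhere. Since each $\Omega_j$ is a highest weight vector, it suffices to check $\psi_{(S,T)}(\Omega_j^{\prime})=0$ for $j=3,4,5$. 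Two of the three are essentially free: up to a scalar they are the images of the vector $\Theta$ of the preceding theorem in the two ``finite'' subalgebras generated by the $(Z_I)^{\prime}$ and by the $(Z_{I+\delta})^{\prime}$, and are therefore killed by $\psi_S$ and by $\psi_T$ respectively. Only the remaining one, which is bihomogeneous of degree $(1,1)$ with respect to the two halves, requires a genuine computation: one expands its image into an explicit quadratic expression in the $X_{SJ}$ and $X_{TK}$ and reduces it to zero using the two-row RTT relations established in the first step.

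The reverse inclusion is the main obstacle. Let $J\subseteq({\mathcal U}_q^+[\widehat{w}])^{\prime}$ be the ideal generated by $\big({\mathcal U}_q(\mathfrak{so}_{10}).\Omega_j\big)^{\prime}$, $j=3,4,5$; by the above there is a surjection $\overline{\psi}\colon({\mathcal U}_q^+[\widehat{w}])^{\prime}/J\twoheadrightarrow\A_{(S,T)}$, and we must prove it injective. I would compare Hilbert series with respect to the $\mathbb{Z}^2$-grading recording the degrees in the two halves (equivalently, the numbers of row-$S$ and row-$T$ generators). On the source, $({\mathcal U}_q^+[\widehat{w}])^{\prime}$ is a flat quadratic deformation of $\operatorname{Sym}(\mathbb{C}^{32})$ which is free over the subalgebra generated by the $(Z_I)^{\prime}$ with an ordered-monomial basis in the $(Z_{I+\delta})^{\prime}$; since the $\mathbb{Z}^2$-grading forces the pure components $J_{(\bullet,0)}$ and $J_{(0,\bullet)}$ to be exactly the $\Theta$-ideals inside the two finite Schubert cell subalgebras, the $(\bullet,0)$ and $(0,\bullet)$ parts of $({\mathcal U}_q^+[\widehat{w}])^{\prime}/J$ are $\A_S$ and $\A_T$ by the preceding theorem, and for the mixed components one uses the degree-$(1,1)$ generator together with the ${\mathcal U}_q(\mathfrak{so}_{10})$-module decomposition of ${\mathcal U}_q^+[w]$ proved in Section~\ref{adjoint action section} to produce a normal-form spanning set and count it. On the target, I would bound $\dim\A_{(S,T)}$ from below by exhibiting sufficiently many linearly independent elements inside the full bialgebra $\A$, using a PBW basis of $\A$ --- available because the half-spin braiding has only a few eigenvalues, so that the quadratic relations of $\A$ are confluent and Bergman's diamond lemma applies. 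The crux is to show that this lower bound already reaches the normal-form count of $({\mathcal U}_q^+[\widehat{w}])^{\prime}/J$; this amounts to the three submodules $\big({\mathcal U}_q(\mathfrak{so}_{10}).\Omega_j\big)^{\prime}$ generating $J$ efficiently --- behaving like a regular sequence of submodules --- so that no relations beyond those three families are needed, and then surjectivity of $\overline{\psi}$ forces equality of dimensions and hence injectivity. As a consistency check I would run the same count on the level of ${\mathcal U}_q(\mathfrak{so}_{10})$-isotypic components, verifying that $\overline{\psi}$ is an isomorphism on each component using the decomposition of ${\mathcal U}_q^+[w]$ restricted to the bidegrees that actually survive in the quotient.
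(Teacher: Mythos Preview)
The paper does not write out a separate proof; the theorem is meant to be read off from Theorem~\ref{FRT relations2}, which gives an explicit presentation of $\A_{(S,T)}$ obtained by reducing all of the RTT relations of $\A$ that involve only rows $S$ and $T$. With that presentation in hand the argument is a one-line comparison: the commutation relations (i)--(ii) of Theorem~\ref{FRT relations2} are exactly the images, under $(Z_I)^\prime\mapsto X_{SI}$ and $(Z_{I+\delta})^\prime\mapsto X_{TI}$, of the twisted Levendorskii--Soibelman relations of $({\mathcal U}_q^+[\widehat{w}])^\prime$, while the thirty ``other relations'' (iii) are precisely the basis vectors of $\bigl({\mathcal U}_q(\mathfrak{so}_{10}).\Omega_j\bigr)^\prime$ for $j=3,4,5$. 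No separate reverse-inclusion step is needed once both sides are known by generators and relations. Your steps 1 and 2 reproduce this content --- step 1 is essentially the computation in the proof of Theorem~\ref{FRT relations2} run backwards --- so through that point you are on the paper's track.

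Your step 3, the Hilbert-series comparison, is where you diverge, and it is also where the real gaps lie. The assertion that $\A$ has a PBW basis ``because the half-spin braiding has only a few eigenvalues, so that the quadratic relations of $\A$ are confluent and Bergman's diamond lemma applies'' is not an argument: a short eigenvalue list does not imply confluence of the rewriting system, and for a $256$-generator quadratic algebra that verification is substantial and you have not carried it out. On the source side, producing and counting a normal-form spanning set for $({\mathcal U}_q^+[\widehat{w}])^\prime/J$ in the mixed bidegrees is precisely the hard part; your sketch does not explain how the three $10$-dimensional submodules interact to cut the dimensions correctly, and the ``regular sequence of submodules'' behaviour you invoke is asserted rather than established. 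The paper sidesteps this entirely: the rank computations on the $8\times 8$ and $16\times 16$ coefficient matrices in the proof of Theorem~\ref{FRT relations2} are what stand in for your dimension count.
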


We remark that $\Omega_3$, $\Omega_4$, and $\Omega_5$ each have weight $\varpi_6$ (see Table \ref{HWVs table}). Hence, ${\mathcal U}_q(\mathfrak{so}_{10}).\Omega_3$, ${\mathcal U}_q(\mathfrak{so}_{10}).\Omega_4$, and ${\mathcal U}_q(\mathfrak{so}_{10}).\Omega_5$ are each isomorphic to the $10$-dimensional vector representation of ${\mathcal U}_q(\mathfrak{so}_{10})$. It is interesting to note that highest weight vectors of weight $0$ played an analogous role in the $D_n^{(1)}$ case.

\section{Preliminaries}

Let $\mathfrak{g}$ be a complex simple Lie algebra with Cartan subalgebra $\mathfrak{h}$, and let $\Phi\subseteq\mathfrak{h}^*$ be the root system of $\mathfrak{g}$. We denote the set of positive roots by $\Phi_+$, the negative roots by $\Phi_-$, and the set of simple roots by $\{\alpha_i\}_{i\in{\bf I}}$, where ${\bf I}$ is an index set. Let $\theta\in\Phi_+$ denote the highest root. Let $\{\varpi_i:i\in{\bf I}\}$ denote the set of fundamental weights of $\mathfrak{g}$. Let $Q=\mathbb{Z}\Phi$, $P=\mathbb{Z}\{\varpi_i:i\in{\bf I}\}$, and $Q_+=\mathbb{Z}_{\geq 0}\Phi$. Recall the partial ordering on $P$: $\mu\leq\mu^\prime$ if and only if $\mu^\prime-\mu\in Q_+$.  For $i\in{\bf I}$, let $s_i$ denote the simple reflection corresponding to $\alpha_i$, and let $W=\langle s_i:i\in{\bf I}\rangle\subseteq\text{Aut}_{\mathbb{Z}}(Q)$ denote the Weyl group of $\mathfrak{g}$. Let $\langle\,\,,\,\,\rangle$ be the invariant symmetric bilinear form on $\mathbb{R}\Phi$ such that $\langle\alpha,\alpha\rangle=2$ for short roots $\alpha\in\Phi$.

$ $\\
Throughout this paper, $k$ will denote a base field of arbitrary characteristic, and $q\in k^\times$ is not a root of unity.
$ $\\

Denote by ${\mathcal U}_q(\mathfrak{g})$ the quantized universal enveloping algebra over $k$ with deformation parameter $q$. The algebra ${\mathcal U}_q(\mathfrak{g})$ has generators $E_i$, $F_i$, and $K_i^{\pm1}$ ($i\in{\bf I}$) and defining relations given in \cite[\textsection 9.1]{CP}. Furthermore, ${\mathcal U}_q(\mathfrak{g})$  has a Hopf algebra structure with comultiplication $\Delta$, antipode $S$, and counit $\epsilon$ given by
\begin{align}
&\label{comultiplication formula}\Delta (E_i) = K_i^{-1}\otimes E_i+E_i\otimes 1,&          &\Delta (K_i ) = K_i\otimes K_i, &             &\Delta(F_i)= 1\otimes F_i+F_i\otimes K_i,\\
&S (E_i) = -K_iE_i,&                                                                  &S(K_i )=K_i^{-1},&                                               &S (F_i) = -F_iK_i^{-1},\\
&\epsilon (E_i) = 0,&                                                                                  &\epsilon (K_i ) =1,&                                              &\epsilon (F_i)=0.\end{align}

Let $B_{\mathfrak{g}}$ be the braid group of $\mathfrak{g}$ and let $\{T_{s_i}:i\in{\bf I}\}$ denote its standard generating set.  There is a standard action by algebra automorphisms of $B_{\mathfrak{g}}$ on $U_q(\mathfrak{g})$ due to Lusztig; we use the version of Lusztig's action given by the formulas in \cite[\textsection 2.3]{Y}.
To each reduced expression of the longest element $w_0\in W$, the braid group action is used to construct a PBW basis of ${\mathcal U}_q(\mathfrak{g})$, see, e.g., \cite[I.6.7]{BG}, \cite[\textsection 9.1.B]{CP}, or \cite[Ch. 37]{L}.

Fix $w\in W$. We recall how the braid group action is used to construct a PBW basis for the quantum Schubert cell algebra ${\mathcal U}_q^+[w]$. For a reduced expression \begin{equation}w = s_{i_1}\cdots s_{i_t}\end{equation} define the roots
\begin{equation}
\beta_1 = \alpha_{i_1}, \beta_2=s_{i_1}\alpha_{i_2},...,\beta_t = s_{i_1}\cdots s_{i_{t-1}}\alpha_{i_t},\end{equation}
and positive root vectors
\begin{equation}
X_{\beta_1}=E_{i_1}, X_{\beta_2}=T_{s_{i_1}}E_{i_2},...,X_{\beta_t}=T_{s_{i_1}}\cdots T_{s_{i_{t-1}}}E_{i_t}.
\end{equation}
Let $\Phi_w$ denote the set of radical roots $\{\beta_1,...,\beta_t\}$. The radical roots of $w$ are precisely the positive roots that get sent to negative roots by the action of $w^{-1}$.
Following \cite{DKP}, let ${\mathcal U}_q^+[w]$ denote the subalgebra of ${\mathcal U}_q(\mathfrak{g})$ generated by the positive root vectors $X_{\beta_1},...,X_{\beta_t}$. The algebra ${\mathcal U}_q(\mathfrak{g})$ is $Q$-graded by setting $\text{deg}(K_i)=0$, $\text{deg}(E_i)=\alpha_i$, and $\text{deg}(F_i)=-\alpha_i$. This induces a $Q$-grading on ${\mathcal U}_q^+[w]$. De Concini, Kac, and Procesi \cite[Proposition 2.2]{DKP} proved that the algebra ${\mathcal U}_q^+[w]$ does not depend on the reduced expression for $w$. Furthermore, ${\mathcal U}_q^+[w]$ has the PBW basis
\begin{equation}
X_{\beta_1}^{n_1}\cdots X_{\beta_t}^{n_t},\hspace{.4cm} n_1,...,n_t\in\mathbb{Z}_{\geq 0}.
\end{equation}

Beck later proved the analogous result for the case when $\mathfrak{g}$ is an affine Kac-Moody Lie algebra \cite{B1,B2}.  For $char(k)=0$, the Levendorskii-Soibelmann Straightening Rule gives commutation relations in ${\mathcal U}_q^+[w]$.

\begin{theorem}\cite[Prop. 5.5.2]{LS} For $i<j$,
\begin{equation}
X_{\beta_i}X_{\beta_j} = q^{\langle\beta_i,\beta_j\rangle}X_{\beta_j}X_{\beta_i}+\sum_{n_{i+1},...,n_{j-1}\geq 0} z(n_{i+1},...,n_{j-1})X_{\beta_{i+1}}^{n_{i+1}}\cdots X_{\beta_{j-1}}^{n_{j-1}},
\end{equation}
where $z(n_{i+1},...,n_{j-1})\in\mathbb{Q}[q^{\pm 1}]$
\end{theorem}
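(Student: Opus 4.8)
The plan is to read the straightening rule directly off the De~Concini--Kac--Procesi PBW basis, using only the $Q$-grading together with elementary facts about radical roots, and to isolate the leading coefficient into a single low-rank computation.

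First I would reduce to the case $i=1$. A contiguous suffix $s_{i_i}\cdots s_{i_t}$ of a reduced word is again reduced, and the algebra automorphism $T_{s_{i_1}}\cdots T_{s_{i_{i-1}}}$ of ${\mathcal U}_q(\mathfrak{g})$ carries the PBW root vectors of $w'':=s_{i_i}\cdots s_{i_t}$ onto $X_{\beta_i},\dots,X_{\beta_t}$ while preserving $\langle\,,\,\rangle$ and shifting weights by $w_1^{-1}$ ($w_1:=s_{i_1}\cdots s_{i_{i-1}}$); applying the inverse automorphism turns the claimed identity for the pair $(i,j)$ in $w$ into the identity for $(1,\,j-i+1)$ in $w''$. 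So I may assume $i=1$, hence $X_{\beta_1}=E_{i_1}$. By the DKP theorem $X_{\beta_j}E_{i_1}\in{\mathcal U}_q^+[w]$, so I expand it in the PBW basis, $X_{\beta_j}E_{i_1}=\sum_{\mathbf n}c_{\mathbf n}X_{\beta_1}^{n_1}\cdots X_{\beta_t}^{n_t}$, with all $c_{\mathbf n}\in\mathbb{Q}[q^{\pm1}]$ by the integrality built into the PBW construction. Two constraints then locate the support: since $X_{\beta_j}E_{i_1}$ is $Q$-homogeneous of degree $\beta_1+\beta_j$, every $\mathbf n$ with $c_{\mathbf n}\ne0$ satisfies $\sum_k n_k\beta_k=\beta_1+\beta_j$; and since $X_{\beta_j}E_{i_1}$ already lies in the subalgebra attached to the reduced prefix $s_{i_1}\cdots s_{i_j}$ (both factors are among its generators $X_{\beta_1},\dots,X_{\beta_j}$, whose PBW basis is a sub-family of that of ${\mathcal U}_q^+[w]$), every such $\mathbf n$ has $n_k=0$ for $k>j$.

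Next I would run the degree argument. Suppose $c_{\mathbf n}\ne0$ and $n_1\ne 0$ or $n_j\ne 0$; consider $n_1\beta_1+\sum_{1<k<j}n_k\beta_k+n_j\beta_j=\beta_1+\beta_j$. If $n_1,n_j\ge1$, subtracting $\beta_1+\beta_j$ makes a nonnegative combination of positive roots vanish, forcing $\mathbf n=e_1+e_j$. If $n_1\ge1$ and $n_j=0$, the equation exhibits $\beta_j$ as a nonnegative integer combination of $\beta_1,\dots,\beta_{j-1}$; but $(s_{i_1}\cdots s_{i_{j-1}})^{-1}$ sends each $\beta_k$ with $k\le j-1$ into $\Phi_-$ and sends the positive root $\beta_j$ — which is not a radical root of $s_{i_1}\cdots s_{i_{j-1}}$ — into $\Phi_+$, so a positive root would land in $-Q_+$, impossible. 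If $n_1=0$ and $n_j\ge1$, the equation exhibits the simple root $\alpha_{i_1}$ as a nonnegative integer combination of $\beta_2,\dots,\beta_j$; but $s_{i_1}$ sends each $\beta_k$ with $k\ge2$ into $\Phi_+$ and sends $\alpha_{i_1}$ to $-\alpha_{i_1}$, so $-\alpha_{i_1}$ would land in $Q_+$, again impossible. Hence the only PBW monomial in the expansion with $n_1\ne0$ or $n_j\ne0$ is $E_{i_1}X_{\beta_j}$, and solving the resulting relation for $E_{i_1}X_{\beta_j}$ produces exactly the shape asserted in the theorem, with all correction coefficients in $\mathbb{Q}[q^{\pm1}]$.

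The remaining point — and the one I expect to be genuine work rather than bookkeeping — is to identify the leading coefficient as $c_{e_1+e_j}=q^{-\langle\beta_1,\beta_j\rangle}$, in particular to see that it is nonzero. I would argue by induction on $j$: when $j=2$ the correction sum is empty, and the clean commutation $X_{\beta_1}X_{\beta_2}=q^{\langle\beta_1,\beta_2\rangle}X_{\beta_2}X_{\beta_1}$ for consecutive PBW root vectors is a computation inside a rank-at-most-two sub-quantum-group using Lusztig's explicit formulas for $T_{s_a}(E_b)$; alternatively, and more uniformly, one invokes the standard relation comparing $E_aT_{s_a}(y)$ with $T_{s_a}(y)E_a$ for a homogeneous $y\in{\mathcal U}_q^+$, whose power of $q$ is governed by the weight of $y$ and evaluates here to $q^{-\langle\alpha_{i_1},\beta_j\rangle}$. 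Everything else — the PBW expansion, the grading, the radical-root manipulations — is routine; the honest obstacles are this leading coefficient and the accompanying Laurent-polynomial integrality, both of which are classical.
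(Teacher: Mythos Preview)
The paper does not supply its own proof of this statement: it is quoted as \cite[Prop.~5.5.2]{LS} and used as a black box, so there is no in-paper argument to compare against.

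Your sketch is nonetheless the standard proof one finds in the literature (the original Levendorskii--Soibelmann argument, or the expositions in Jantzen's \emph{Lectures on Quantum Groups}, \S\S8.20--8.24, or the De~Concini--Procesi notes). The reduction to $i=1$ via the braid automorphism $T_{s_{i_1}}\cdots T_{s_{i_{i-1}}}$, the PBW expansion inside the prefix subalgebra ${\mathcal U}_q^+[s_{i_1}\cdots s_{i_j}]$, and the root-lattice argument ruling out monomials with $n_1\ne 0$ or $n_j\ne 0$ other than $X_{\beta_1}X_{\beta_j}$ are all correct and are exactly the steps in the classical proof.

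One small caution on the point you yourself flag. The phrase ``by induction on $j$'' is misleading for the leading coefficient: knowing the straightening rule for all $j'<j$ gives you control over the intermediate monomials $X_{\beta_{i+1}}^{n_{i+1}}\cdots X_{\beta_{j-1}}^{n_{j-1}}$, but does not by itself produce the scalar in front of $X_{\beta_1}X_{\beta_j}$. What actually pins that scalar down is your second suggestion --- the commutation of $E_{i_1}$ with an element of $T_{s_{i_1}}({\mathcal U}_q^+)\cap{\mathcal U}_q^+$ of weight $\beta_j$, which is a genuine (though standard) lemma about Lusztig's braid operators rather than an inductive consequence of the $j=2$ case. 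With that lemma in hand your outline is complete; without it the induction does not close.
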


It follows that ${\mathcal U}_q^+[w]$ is an iterated Ore extension over the base field $k$. We remark that a straightening rule holds for the algebras of interest in this paper, even though we do not assume $char(k)=0$.

\section{Root Data for Types $D_5$, \texorpdfstring{$E_6$}{E_6}, and \texorpdfstring{$E_6^{(1)}$}{E_6^(1)}}\label{section root system E6}

In this section we will give an explicit realization of the root systems in which we are interested. Let
\[  {\bf I^\prime}=\{2,...,6\}, \hspace{.6in} {\bf I}=\{1,...,6\}, \hspace{.6in}{\bf I_0}=\{0,1,...,6\}.\]
These sets are index sets for the Lie algebras of type $D_5$, $E_6$, and $E_6^{(1)}$ respectively. For ${\bf J}={\bf I^\prime},{\bf I}$ or ${\bf I_0}$, let ${\mathcal U}_q(\mathfrak{g}_{{\bf J}})$ denote the corresponding quantized enveloping algebra with generators $E_i$, $F_i$, $K_i^{\pm 1}$ ($i\in{\bf J}$) and defining relations given in \cite[\textsection 9.1]{CP}.  Since  ${\bf I^\prime}\subseteq{\bf I}\subseteq{\bf I_0}$, this yields a natural chain of inclusions ${\mathcal U}_q(\mathfrak{g}_{{\bf I^\prime}})\subseteq{\mathcal U}_q(\mathfrak{g}_{{\bf I}})\subseteq{\mathcal U}_q(\mathfrak{g}_{{\bf I_0}})$.

$ $\\

Throughout this paper we treat all subalgebras and sub-Hopf algebras of ${\mathcal U}_q(\mathfrak{g}_{{\bf I^\prime}})$ and ${\mathcal U}_q(\mathfrak{g}_{{\bf I}})$ as subalgebras and sub-Hopf algebras of ${\mathcal U}_q(\mathfrak{g}_{{\bf I_0}})$.

$ $\\

Let $e_1,...,e_6$ denote an orthonormal basis of a six-dimensional Euclidean space $E$ with respect to an inner product $\left<\,\,\,,\,\,\,\right>$, and define a subset $\Phi\subset E$ as follows:
\[ \Phi := \left\{ x=(a_1,a_2,a_3,a_4,a_5,\sqrt{3}a_6)\in E: (a_1,...,a_5,a_6)\in\mathbb{Z}^6\cup\left(\mathbb{Z}+\frac{1}{2}\right)^6,a_6=-16a_1a_2a_3a_4a_5, |x|^2=2\right\}. \]
The set $\Phi$ is a root system of type $E_6$. The roots $\alpha_1=\frac{1}{2}(e_1-e_2-e_3-e_4-e_5-\sqrt{3}e_6)$, $\alpha_2=e_1+e_2$, and $\alpha_i= e_{i-1}-e_{i-2}$ for $3\leq i\leq 6$ are a basis of positive simple roots for $\Phi$. This choice of positive roots yields a decomposition $\Phi =\Phi_+ \cup \Phi_-$ into positive and negative roots.  Let $Q=\mathbb{Z}\Phi$, $Q_+=\mathbb{Z}_{\geq 0}\Phi_+$, and $P=\mathbb{Z}\{\varpi_i:i\in{\bf I}\}$.  Recall the partial ordering on $P$ is defined by $\beta\leq\beta^\prime$ if and only if $\beta^\prime-\beta\in Q_+$. The partial ordering on $P$ induces on ordering on any subset of $P$. In particular, this gives a partial ordering on $\Phi$ and $Q$.

A basis for the root system of type $E_6^{(1)}$ is obtained from the basis $\{\alpha_i\}_{i\in{\bf I}}$ of $\Phi$ by including an additional root $\alpha_0:=-\theta+\delta$, where $\theta=\alpha_1+2\alpha_2+2\alpha_3+3\alpha_4+2\alpha_5+\alpha_6\in\Phi$ is the highest root and $\delta$ is isotropic. The numbering of the simple roots is compatible with the numbering given by the Dynkin diagram in Figure \ref{Dynkin diagram}.

\begin{figure}[t]\caption{Dynkin Diagram for $E_6^{(1)}$}\label{Dynkin diagram}
\begin{picture}(200,110)\linethickness{2pt}
	\put(3,14){\circle*{5}}
	\put(53,14){\circle*{5}}
	\put(103,14){\circle*{5}}
	\put(153,14){\circle*{5}}
	\put(203,14){\circle*{5}}
	\put(103,57){\circle*{5}}
	\put(103,100){\circle*{5}}
	\put(5,14){\line(1,0){200}}
	\put(103,14){\line(0,1){86}}
	\put(107,97){0}
	\put(0,0){1}
	\put(107,54){2}
	\put(50,0){3}
	\put(100,0){4}
	\put(150,0){5}
	\put(200,0){6}
\end{picture}
\end{figure}
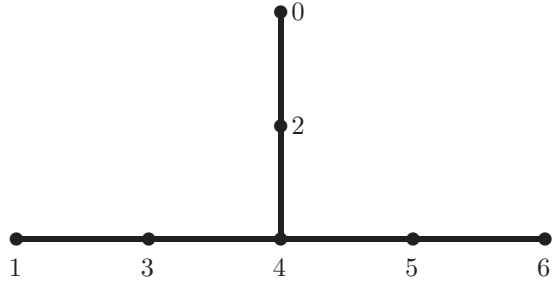

Denote by $W_{{\bf I_0}}=\langle s_i:i\in{\bf I_0}\rangle$ the affine Weyl group of type $E_6^{(1)}$. For any subset ${\bf J}\subseteq{\bf I_0}$, let $W_{{\bf J}}$ be the subgroup of $W_{{\bf I_0}}$ generated by $\{s_j: j\in{\bf J}\}$. Thus $W_{{\bf I}}$ is the finite Weyl group of type $E_6$ and $W_{{\bf I^\prime}}$ is the Weyl group of type $D_5$. Let $w_0^{\bf J}$ denote the longest element of $W_{\bf J}$ (if it exists).

\section{The Quantum Schubert Cell Algebras ${\mathcal U}_q^+[w]$ and \texorpdfstring{${\mathcal U}_q^+[\widehat{w}]$}{U_q^+[hat w]}}\label{section quantum schubert cells}

Let
\begin{equation}
w=w_0^{{\bf I^\prime}}w_0^{\bf I}\in W_{{\bf I}}\subseteq W_{{\bf I_0}}.
\end{equation}
We have the following reduced expression
\begin{equation}
w=(s_1s_3s_4s_2s_5s_4s_3s_1)(s_6s_5s_4s_3s_2s_4s_5s_6)\in W_{{\bf I}},
\end{equation}
and radical roots $\Phi_w:=\{\beta\in\Phi:\beta\geq\alpha_1\}$.  We associate to $w$ a related Weyl group element $\widehat{w}$ as follows. Let  $\widehat{w}\in W_{{\bf I_0}}$ be the Weyl group element so that the set of  radical roots of $\widehat{w}$ is
\[ \Phi_{\widehat{w}}:=\{\beta+n\delta:\beta\in\Phi_w,n=0,1\}.\]
The length of $\widehat{w}$ is twice the length of $w$. We can write $\widehat{w}$ as follows:
\begin{equation}
 \widehat{w} = w(s_0s_2s_4s_5s_3s_4s_2s_0)(s_1s_3s_4s_2s_5s_4s_3s_1)\in W_{{\bf I_0}}.
 \end{equation}
More generally, if $u$ is an arbitrary element of a \emph{finite} Weyl group, then there is a $\widehat{u}$ in the corresponding affine Weyl group so that $\Phi_{\widehat{u}}=\{\beta+n\delta:\beta\in\Phi_u,n=0,1\}$ if and only if $\Phi_u=\{\beta : \beta\geq \alpha_i\}$ for some cominiscule root $\alpha_i$. Furthermore, the element $\widehat{u}$ constructed from $u$ is unique whenever it exists.

Denote by ${\mathcal U}_q^+[w]$ and ${\mathcal U}_q^+[\widehat{w}]$ the quantum Schubert cell algebras corresponding to $w$ and $\widehat{w}$. Observe first that the highest root $\theta\in\Phi$ is equal to $\varpi_2$ and
\[\Phi_w=\{\varpi_2 - \sum_{i\in I}e_i:I\in{\mathcal B}\},
\]
where ${\mathcal B}$ is the set of all subsets of $\{1,2,3,4,5\}$ having even cardinality.
Hence, we will identify a root $\beta\in\Phi_w$ (or a root vector $X_\beta\in{\mathcal U}_q^+[w]$) with the corresponding set $I\in{\mathcal B}$ whenever $\beta=\varpi_2-\sum_{i\in I}e_i\in\Phi_w$.  The poset structure on $\Phi_w$ induces a poset structure on  ${\mathcal B}$ (see Figure \ref{poset B}).  Define the weight of a subset $I\in{\mathcal B}$ by $wt(I)=\varpi_2-\sum_{i\in I}e_i\in\Phi_w$.

We define an equivalence relation on ${\mathcal B}\times{\mathcal B}$ by the rule $(I,J)\sim (K,L)$ if and only if $wt(I)+wt(J)=wt(K)+wt(L)$. There are $16$ equivalence classes of size $1$. They are each of the form $\{(I,I)\}$ for $I\in{\mathcal B}$. Furthermore, for every $I,J\in{\mathcal B}$ such that $|(I\cup J)\backslash (I\cap J)|=2$, $\{(I,J),(J,I)\}$ is an equivalence class. There are $80$ such equivalence classes of this form. Finally there are $10$ classes of size $8$. In fact $(I,J)$ is in an equivalence class of size $8$ if and only if $|(I\cup J)\backslash (I\cap J)|=4$ (see Table \ref{large classes}).  Recalling that the weight function on $\mathcal B$ is injective, we define a partial ordering $\preceq$ on ${\mathcal B}\times{\mathcal B}$ by $(I,J)\preceq (K,L)$ if and only if $(I,J)\sim (K,L)$ and $I\leq K$. The partial ordering makes ${\mathcal B}\times{\mathcal B}$ a graded poset. We will let $ht:{\mathcal B}\times{\mathcal B}\to\mathbb{N}$ denote the height function on this poset such that the minimal elements have height $1$.

The following proposition gives another description of the equivalence relation on ${\mathcal B}\times{\mathcal B}$ and a straightforward method to determine the size of an equivalence class of $(I,J)\in{\mathcal B}\times{\mathcal B}$ based on the size of the symmetric difference $(I\cup J)\backslash (I\cap J)$.

\begin{proposition}
For all $I,J,K,L\in{\mathcal B}$, we have the following:
\begin{enumerate}[(i)]
\item $(I,J)\sim (K,L)$ if and only if $I\cup J=K\cup L$ and $I\cap J=K\cap L$.
\item $\left<wt(I),wt(J)\right>=2-\frac{1}{2}|(I\cup J)\backslash (I\cap J)|$, which is equal to $0$, $1$, or $2$ whenever the cardinality of the equivalence class $[(I,J)]$ is $8$, $2$, or $1$ respectively.
\end{enumerate}
\end{proposition}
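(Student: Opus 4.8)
The plan is to prove everything by direct computation using the explicit coordinate description of $\Phi_w$ and the inner product on $E$. Recall that $wt(I) = \varpi_2 - \sum_{i\in I} e_i = \theta - \sum_{i\in I}e_i$, and that $\theta = \varpi_2 = e_1+e_2$ in the coordinates fixed in Section \ref{section root system E6}. First I would record the formula $wt(I) = e_1+e_2 - \sum_{i\in I}e_i$, valid for all $I\in{\mathcal B}$, and compute the inner product directly:
\[
\left<wt(I),wt(J)\right> = \left<e_1+e_2,e_1+e_2\right> - \left<e_1+e_2,\textstyle\sum_{j\in J}e_j\right> - \left<\textstyle\sum_{i\in I}e_i,e_1+e_2\right> + \left<\textstyle\sum_{i\in I}e_i,\textstyle\sum_{j\in J}e_j\right>.
\]
Using orthonormality of $e_1,\dots,e_5$, the first term is $2$, the cross terms contribute $-|I\cap\{1,2\}| - |J\cap\{1,2\}|$, and the last term is $|I\cap J|$. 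A short bookkeeping argument then needs to show that $2 - |I\cap\{1,2\}| - |J\cap\{1,2\}| + |I\cap J|$ equals $2 - \tfrac12|(I\cup J)\backslash(I\cap J)|$; this is where the even-cardinality constraint on members of ${\mathcal B}$ is essential, since it forces the parity relations that make the two expressions agree. I expect this parity reconciliation to be the only genuinely delicate point — the rest is routine — and I would handle it by writing $a = |I\cap\{1,2\}|$, $b=|J\cap\{1,2\}|$ and observing that $|I|,|J|$ even restricts the combinatorics enough (there are only finitely many cases for $a,b\in\{0,1,2\}$) to verify the identity directly, or, more cleanly, by checking it first on the generators/atoms of the poset and then arguing both sides are unchanged under replacing a pair $(i,j)\notin I$, $i,j\in I$ by $(i,j)\in I$, $i,j\notin I$ type moves that preserve ${\mathcal B}$.

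Once part (ii)'s formula $\left<wt(I),wt(J)\right> = 2 - \tfrac12|(I\cup J)\backslash(I\cap J)|$ is established, the numerical consequences are immediate: since $I,J\in{\mathcal B}$ both have even cardinality, $|(I\cup J)\backslash(I\cap J)| = |I| + |J| - 2|I\cap J|$ is even, and it is at most $4$ because $I,J\subseteq\{1,\dots,5\}$ have even size at most $4$ with $|(I\cup J)\backslash(I\cap J)|$ of the same parity — so it lies in $\{0,2,4\}$, giving $\left<wt(I),wt(J)\right>\in\{2,1,0\}$ respectively. The correspondence with the class sizes $1,2,8$ stated in the text then follows from the classification of equivalence classes already recorded in the paragraph preceding the proposition (the $16$ singletons, $80$ pairs, $10$ octads), so it suffices to match the symmetric-difference size $0,2,4$ with class size $1,2,8$, which part (i) will supply.

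For part (i), I would argue both directions. The ``if'' direction is trivial: $I\cup J = K\cup L$ and $I\cap J = K\cap L$ imply $\sum_{i\in I}e_i + \sum_{j\in J}e_j = \sum_{i\in I\cup J}e_i + \sum_{i\in I\cap J}e_i = \sum_{i\in K\cup L}e_i + \sum_{i\in K\cap L}e_i = \sum_{k\in K}e_k + \sum_{l\in L}e_l$, hence $wt(I)+wt(J) = wt(K)+wt(L)$, i.e.\ $(I,J)\sim(K,L)$. For ``only if'', note $wt(I)+wt(J) = 2(e_1+e_2) - \sum_{i}(\mathbf 1_I(i)+\mathbf 1_J(i))e_i$, so $(I,J)\sim(K,L)$ is equivalent to the multiset equality $\mathbf 1_I + \mathbf 1_J = \mathbf 1_K + \mathbf 1_L$ as functions $\{1,\dots,5\}\to\{0,1,2\}$. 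From such a function $f = \mathbf 1_I+\mathbf 1_J$ one recovers $I\cap J = f^{-1}(2)$ and $I\cup J = f^{-1}(\{1,2\})$, and these depend only on $f$; hence $I\cup J = K\cup L$ and $I\cap J = K\cap L$. This completes the argument; the only real work is the parity bookkeeping in part (ii), which I flag as the main obstacle but one that is entirely elementary given the even-cardinality hypothesis defining ${\mathcal B}$.
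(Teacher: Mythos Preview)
The paper states this proposition without proof, so there is no argument to compare against; your overall strategy of direct computation is the natural one and your treatment of part (i) is correct. However, your computation for part (ii) contains a genuine error: you have conflated the fundamental weight $\varpi_2$ with the simple root $\alpha_2 = e_1+e_2$. In the coordinates fixed in the paper, the highest root is
\[
\theta = \varpi_2 = \alpha_1+2\alpha_2+2\alpha_3+3\alpha_4+2\alpha_5+\alpha_6 = \tfrac{1}{2}(e_1+e_2+e_3+e_4+e_5) - \tfrac{\sqrt{3}}{2}e_6,
\]
not $e_1+e_2$. With the correct $\varpi_2$ one has $\langle\varpi_2,e_j\rangle = \tfrac12$ for every $j\in\{1,\dots,5\}$, so the cross terms in your expansion of $\langle wt(I),wt(J)\rangle$ contribute $-\tfrac12|I| - \tfrac12|J|$ rather than $-|I\cap\{1,2\}|-|J\cap\{1,2\}|$, and the desired identity
\[
\langle wt(I),wt(J)\rangle \;=\; 2 - \tfrac12|I| - \tfrac12|J| + |I\cap J| \;=\; 2 - \tfrac12\bigl(|I|+|J|-2|I\cap J|\bigr) \;=\; 2 - \tfrac12\,|(I\cup J)\setminus(I\cap J)|
\]
follows immediately, with no parity argument needed at all.

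The ``parity reconciliation'' you flag as the main obstacle is in fact a symptom of the wrong formula for $\varpi_2$: with $I=J=\{3,4\}$ your expression $2 - |I\cap\{1,2\}| - |J\cap\{1,2\}| + |I\cap J|$ evaluates to $4$, whereas $\langle wt(I),wt(I)\rangle = 2$, so the reconciliation you propose cannot succeed. Once $\varpi_2$ is corrected the proof is a one-line computation; your argument for part (i) goes through unchanged, since it uses only that $\varpi_2$ appears as a common additive constant on both sides.
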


We label the root vectors of ${\mathcal U}_q^+[w]$ by $Y_I$ (for $I\in{\mathcal B}$) and the root vectors of ${\mathcal U}_q^+[\widehat{w}]$ by $Z_I$ and $Z_{I+\delta}$ (for $I\in{\mathcal B}$). In order to succinctly write the defining relations of ${\mathcal U}_q^+[w]$ and ${\mathcal U}_q^+[\widehat{w}]$, we first need to introduce some notation. We define a function $N:{\mathcal B}\to\mathbb{N}\cup\{0\}$ as follows: if $I=\{i_1>\cdots >i_\ell\}\in{\mathcal B}$, then $N(I)=\sum_{1\leq j\leq \ell} 10^{j-1}i_j$. We use the convention that $N(\emptyset)=0$. Define the lexicographic ordering $\leq_{lexico.}$ on ${\mathcal B}$ by the rule $I\leq_{lexico.}J$ if and only if $N(I)\leq N(J)$. We define the following for every $I,J\in{\mathcal B}$:
\begin{align}
\label{epsilondefn}&\epsilon(I,J):=\begin{cases}1, &\text{if }I<J\text{ and }\langle wt(I),wt(J)\rangle=0,\\
0, &\text{otherwise.}\end{cases}
\end{align}

The following theorems give the defining relations of ${\mathcal U}_q^+[w]$ and ${\mathcal U}_q^+[\widehat{w}]$.

\begin{theorem} The algebra ${\mathcal U}_q^+[w]$ is generated by $\{Y_I:I\in{\mathcal B}\}$ and has defining relations
\begin{equation}
Y_IY_J =q^{\langle wt(I),wt(J)\rangle}Y_JY_I+\hat{q}\sum_{\tiny\begin{array}{c}(I,J)\prec (L,M),\\ M\leq_{lexico.} L\end{array}}  (-q)^{ht(L,M)-ht(I,J)-1}Y_LY_M,
\end{equation}
for every $I,J\in{\mathcal B}$ such that $I\not \geq J$.
\end{theorem}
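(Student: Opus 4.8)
The plan is to read the relations off the Levendorskii--Soibelmann straightening rule for the fixed reduced expression $w=(s_1s_3s_4s_2s_5s_4s_3s_1)(s_6s_5s_4s_3s_2s_4s_5s_6)$, using the cominiscule hypothesis to force all relations to be quadratic. First I would list the sixteen quantum root vectors $X_{\beta_1},\dots,X_{\beta_{16}}$ attached to this word, relabel $X_\beta=Y_I$ via $\beta=wt(I)$, and invoke \cite{DKP} to see that these generate ${\mathcal U}_q^+[w]$. Since $\mathfrak{n}_+\cap w\mathfrak{n}_-=rad(\mathfrak{p})$ is the degree-one component of the $\mathbb{Z}$-grading of $\mathfrak{e}_6$ by the coefficient of $\alpha_1$, every $\beta\in\Phi_w$ has $\alpha_1$-coefficient $1$; hence any monomial $X_{\beta_{i+1}}^{n_{i+1}}\cdots X_{\beta_{j-1}}^{n_{j-1}}$ of $Q$-degree $\beta_i+\beta_j$ must have $n_{i+1}+\cdots+n_{j-1}=2$. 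Therefore the straightening rule (which, by the remark following its statement, applies to our algebras for arbitrary $char(k)$) degenerates, for $i<j$, to a quadratic identity
\[ X_{\beta_i}X_{\beta_j}=q^{\langle\beta_i,\beta_j\rangle}X_{\beta_j}X_{\beta_i}+\textstyle\sum_{L,M}c_{L,M}\,Y_LY_M, \]
with sum over PBW-ordered quadratic monomials of $Q$-degree $\beta_i+\beta_j$. This already yields the leading term $q^{\langle wt(I),wt(J)\rangle}Y_JY_I$ and reduces the theorem to identifying the index set and the coefficients $c_{L,M}$.

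Next I would pin down the correction monomials. The $Q$-grading forces $wt(L)+wt(M)=wt(I)+wt(J)$, i.e. $(L,M)\sim(I,J)$ (using the proposition above characterizing $\sim$), and the straightening rule forces $\beta_L,\beta_M$ to lie strictly between $\beta_i$ and $\beta_j$ in the convex order of the reduced word, occurring in PBW order. Matching this with the statement is a finite combinatorial lemma, verified on the explicit list of the $\beta_i$: under $I\leftrightarrow wt(I)$ the convex order on $\Phi_w$ is the reverse of $\leq_{lexico.}$ on ${\mathcal B}$, each $\sim$-class is a $\preceq$-chain running from $(I\cup J,I\cap J)$ up to $(I\cap J,I\cup J)$, and ``$\beta_L,\beta_M$ strictly between $\beta_i,\beta_j$, PBW-ordered'' becomes exactly ``$(I,J)\prec(L,M)$ and $M\leq_{lexico.}L$''. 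The hypothesis $I\not\ge J$ amounts to asking that $(I,J)\prec(J,I)$ in its chain, i.e. that $Y_IY_J$ is the ordering to be rewritten. (One can cut the bookkeeping substantially by using that ${\mathcal U}_q^+[w]$ is a ${\mathcal U}_q(\mathfrak{so}_{10})$-module algebra: the set of relations is stable under the $E_k,F_k$-action, so it suffices to treat one relation per ${\mathcal U}_q(\mathfrak{so}_{10})$-orbit, and ${\mathcal B}$ is the minuscule poset of the half-spin representation, which organizes the orbits cleanly. Alternatively, one may note that for a cominiscule $w$ the algebra ${\mathcal U}_q^+[w]$ is the braided symmetric algebra of the half-spin module in the sense of \cite{Z}, \cite{BZ}, so the relations are the matrix entries of the braiding $\widehat R$ on the $16$-dimensional half-spin representation.)

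It then remains to compute the coefficients $\hat{q}\,(-q)^{ht(L,M)-ht(I,J)-1}$, and this is where I expect the real work to be. I would proceed by induction on the height gap $d=ht(L,M)-ht(I,J)$: the base case is the pairs $\beta_i,\beta_{i+1}$ adjacent in the reduced word, for which the correction sum is empty and $X_{\beta_i}X_{\beta_{i+1}}=q^{\langle\beta_i,\beta_{i+1}\rangle}X_{\beta_{i+1}}X_{\beta_i}$; the inductive step expands $(X_{\beta_i}X_{\beta_{i+1}})X_{\beta_j}=X_{\beta_i}(X_{\beta_{i+1}}X_{\beta_j})$ using the already-known relations of smaller gap, collects everything in PBW form, and lets the geometric pattern in $(-q)$ telescope out. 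An alternative organization: once the first two steps show that the stated relations \emph{hold} in ${\mathcal U}_q^+[w]$, one checks by Bergman's diamond lemma that the algebra they present has the PBW Hilbert series, so the natural surjection onto ${\mathcal U}_q^+[w]$ is an isomorphism and the coefficients are then forced by uniqueness of straightening.

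The genuine difficulty is concentrated here, and within it in the ten size-$8$ equivalence classes: there the extreme relation carries six correction terms, and the crux is to verify that the abstract output of the straightening rule is faithfully encoded by the height function together with the reverse-lexicographic PBW order --- that is, that all the signs and powers of $q$ come out exactly as claimed. Everything else --- quadraticity, the overall shape of the relations, and the leading coefficient --- is essentially formal once the reduced expression and the identification $\Phi_w\leftrightarrow{\mathcal B}$ are in place.
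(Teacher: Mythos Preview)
The paper states this theorem without proof, so there is no in-paper argument to compare against; your outline is the natural strategy and is largely sound. The reduction to quadratic relations via the cominiscule hypothesis (every $\beta\in\Phi_w$ has $\alpha_1$-coefficient~$1$), the identification of the correction support with the $\sim$-class of $(I,J)$, and the plan to pin down the $\hat q(-q)^{\cdots}$ coefficients by an induction or a diamond-lemma count are all correct in spirit.

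There is, however, a concrete error in the combinatorial matching step. For the paper's reduced expression $w=(s_1s_3s_4s_2s_5s_4s_3s_1)(s_6s_5s_4s_3s_2s_4s_5s_6)$ the convex order on $\Phi_w$ is \emph{not} the reverse of $\leq_{lexico.}$: one computes $\beta_1,\dots,\beta_{16}$ to correspond to $[2345],[1345],[1245],[45],[1235],[35],[25],[15],[1234],[34],[24],[14],[23],[13],[12],\emptyset$ (the rows of Figure~\ref{poset B} read bottom-to-top), whereas reverse-lex places e.g.\ $[1235]$ and $[1234]$ before $[45]$. Relatedly, the size-$8$ $\sim$-classes are not $\preceq$-chains: each has two incomparable elements at height~$4$ (for instance $(23,14)$ and $(14,23)$ in the class of $(1234,\emptyset)$). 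Thus ``PBW-ordered'' and ``$M\leq_{lexico.}L$'' do not coincide in general; e.g.\ in the class of $(2345,14)$ the PBW representative of $\{1234,45\}$ is $(45,1234)$ while lex selects $(1234,45)$.

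The gap is easy to close in either of two ways. First, since $\alpha_1$ is cominiscule, all reduced expressions for $w$ are related by $2$-term (commuting) braid moves, so the labelled root vectors $Y_I$ are independent of the choice; and one checks directly on the covering relations of Figure~\ref{poset B} that reverse-lex \emph{is} a linear extension of ${\mathcal B}$, hence arises from some reduced expression. Running your argument with that expression makes PBW order and $M\leq_{lexico.}L$ agree on the nose. Second, even with the paper's word, the only discrepancy is at the two height-$4$ members of a size-$8$ class, and these always form an incomparable pair $(a_4,A_4)$ with $\langle wt(a_4),wt(A_4)\rangle=0$ whose correction sum under the stated rule is empty; hence $Y_{a_4}Y_{A_4}=Y_{A_4}Y_{a_4}$ and either ordering gives the same term. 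With one of these observations inserted, your proof goes through.
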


\begin{theorem} The algebra ${\mathcal U}_q^+[\widehat{w}]$ is generated by $\{Z_I,Z_{I+\delta}:I\in{\mathcal B}\}$ and has defining relations
 \begin{align}
\label{Uqw_hat relns1}&Z_{I+n\delta}Z_{J+n\delta} =q^{\langle wt(I),wt(J)\rangle} Z_{J+n\delta}Z_{I+n\delta}+\hat{q}\!\!\!\!\!\!\!\sum_{\tiny\begin{array}{c}(I,J)\prec (L,M),\\ M\leq_{lexico.} L\end{array}}\!\!\!\!\!\!\!  (-q)^{ht(L,M)-ht(I,J)-1}Z_{L+n\delta}Z_{M+n\delta},\\
&Z_IZ_{J+\delta} =q^{\langle wt(I),wt(J)\rangle} Z_{J+\delta} Z_I+\hat{q}\left(\sum_{(I,J)\prec(L,M)}\!\!\!\!\!\!\!(-q)^{ht(L,M)-ht(I,J)-1}Z_LZ_{M+\delta}+q^{-1}\epsilon(I,J)Z_JZ_{I+\delta}\right),
\end{align}
for every $I,J\in{\mathcal B}$ and $n\in\{0,1\}$ (with $I\not\geq J$ in Eqn. \ref{Uqw_hat relns1} above).
\end{theorem}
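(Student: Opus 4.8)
The plan is to read all three families of relations off the Levendorskii--Soibelman straightening rule of \cite{LS} (valid here regardless of $\mathrm{char}(k)$, as noted above) applied to the fixed reduced expression $\widehat{w}=w\,(s_0s_2s_4s_5s_3s_4s_2s_0)(s_1s_3s_4s_2s_5s_4s_3s_1)$ of length $32$. Write $v:=(s_0s_2s_4s_5s_3s_4s_2s_0)(s_1s_3s_4s_2s_5s_4s_3s_1)$, so that $\widehat{w}=wv$ is a reduced factorization. The first task is bookkeeping: compute the roots $\beta_k=s_{i_1}\cdots s_{i_{k-1}}\alpha_{i_k}$ along this word for $k=1,\dots,32$. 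Since the word restricts to a reduced word for $w$ on its first $16$ letters, $\{\beta_1,\dots,\beta_{16}\}=\Phi_w=\{wt(I):I\in{\mathcal B}\}$, and since $\Phi_{\widehat{w}}\setminus\Phi_w=\{wt(I)+\delta:I\in{\mathcal B}\}$ the remaining roots are the $wt(I)+\delta$. I would then verify that the ordering of the generators $Z_{I+\delta}$ induced by the last $16$ letters is compatible with that of the $Z_I$ in the precise sense needed for the single graded poset $({\mathcal B}\times{\mathcal B},\preceq)$ and the single height function $ht$ to govern all three families, and record the dictionary $Z_I\leftrightarrow X_{\beta_k}$, $Z_{I+\delta}\leftrightarrow X_{\beta_{k+16}}$.

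With the indexing fixed, the relations among the $Z_I$'s (the $n=0$ case) are immediate: a reduced word for $w$ is an initial segment of the chosen word for $\widehat{w}$, so the subalgebra generated by $X_{\beta_1},\dots,X_{\beta_{16}}$ is precisely ${\mathcal U}_q^+[w]$ with $Z_I=Y_I$, and the relations are those of the preceding theorem. For the $n=1$ case, set $T_w:=T_{s_{i_1}}\cdots T_{s_{i_{16}}}$; then $X_{\beta_{16+k}}=T_w(X'_{\gamma_k})$ where the $X'_{\gamma_k}$ are the PBW generators of the quantum Schubert cell ${\mathcal U}_q^+[v]$, and since $T_w$ is an algebra automorphism of ${\mathcal U}_q(\mathfrak{g}_{{\bf I_0}})$ it suffices to identify the relations inside ${\mathcal U}_q^+[v]$. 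Here I would exploit the self-similarity of the construction — the two eight-letter blocks of $v$ are, after commuting moves among non-adjacent $s_i$'s, of the same ``$w_0^{\mathrm{sub}}\,w_0$-for-a-cominiscule-node'' shape as the blocks of $w$, now attached to $D_5$-subdiagrams of $E_6^{(1)}$ — to transport the computation (via the identification, for $\alpha,\alpha'\in\Phi_{w^{-1}}$, $\langle-\alpha+\delta,-\alpha'+\delta\rangle=\langle\alpha,\alpha'\rangle$, together with $w$ being an isometry) down to the $D_5$ level, where the relevant quantum Schubert cell is even-dimensional quantum Euclidean space with known defining relations; this yields for the $Z_{I+\delta}$'s the same relations as for the $Z_I$'s, using $\langle wt(I)+\delta,\,wt(J)+\delta\rangle=\langle wt(I),wt(J)\rangle$ since $\delta$ is isotropic and orthogonal to $Q$. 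Absent a clean symmetry argument, the straightening computation can simply be redone for $v$ directly.

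The substantive part is the mixed family $Z_IZ_{J+\delta}=q^{\langle wt(I),wt(J)\rangle}Z_{J+\delta}Z_I+\cdots$, obtained by applying the straightening rule to a pair $(\beta_a,\beta_b)$ with $\beta_a=wt(I)$, $a\le 16$, and $\beta_b=wt(J)+\delta$, $b>16$; the leading coefficient is again $q^{\langle\beta_a,\beta_b\rangle}=q^{\langle wt(I),wt(J)\rangle}$ by isotropy of $\delta$. To control the correction, grade ${\mathcal U}_q^+[\widehat{w}]$ by $Q\oplus\mathbb{Z}\delta$: every monomial on the right-hand side has degree $wt(I)+wt(J)+\delta$, hence $\delta$-degree exactly $1$, so it is a product of a single $Z_{\bullet+\delta}$-generator with $Z_\bullet$-generators whose weights sum to $wt(I)+wt(J)$; reducing such a monomial to ordered form $Z_LZ_{M+\delta}$ via the relations already established, and tracking which $(L,M)$ index PBW generators lying strictly between positions $a$ and $b$, one sees that only the pairs with $(I,J)\prec(L,M)$ survive, together with the ``swapped'' pair $Z_JZ_{I+\delta}$. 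The coefficients — the $\hat{q}$ prefactor, the $(-q)^{ht(L,M)-ht(I,J)-1}$ weights, and above all the extra $q^{-1}\epsilon(I,J)Z_JZ_{I+\delta}$ contribution — I would pin down by induction on $ht(I,J)$ from the minimal-height cases, cross-checked against the analogous $D_n^{(1)}$ relations of \cite{JN}. I expect this last point to be the main obstacle: by the definition of $\epsilon$, the anomalous term appears exactly when $(I,J)$ lies in a size-$8$ equivalence class, i.e.\ $\langle wt(I),wt(J)\rangle=0$, and establishing its precise coefficient — and confirming that no further monomials evade the $\delta$-grading and poset constraints — seems to require careful sign and height bookkeeping through the $D_5$-reflection combinatorics of the middle block of the word rather than any formal argument.
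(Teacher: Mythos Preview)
The paper does not prove this theorem: it is stated in Section~\ref{section quantum schubert cells} as a description of the defining relations of ${\mathcal U}_q^+[\widehat{w}]$, with no accompanying proof environment, and no later section returns to justify it. In context the authors treat it as a computational fact that the reader may verify from the Levendorskii--Soibelman straightening rule applied to the given reduced expression for $\widehat{w}$. So there is no ``paper's own proof'' to compare your proposal against.

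As to your plan itself: the overall architecture is sound. The $n=0$ block is indeed immediate from the preceding theorem since the first sixteen letters of the reduced word for $\widehat{w}$ form the chosen reduced word for $w$. Your use of the braid group automorphism $T_w$ to transport the $n=1$ relations to a computation inside ${\mathcal U}_q^+[v]$ is correct in principle, and the isotropy of $\delta$ handles the exponents of $q$ exactly as you say. Your $Q\oplus\mathbb{Z}\delta$-grading argument for the mixed family correctly forces any correction monomial to have $\delta$-degree one and hence to be of the shape $Z_LZ_{M+\delta}$ after straightening.

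Two cautions. First, the ``self-similarity'' shortcut for the $n=1$ block is less clean than you suggest: the diagram automorphisms of $E_6^{(1)}$ permute the three two-node arms meeting at node $4$, but matching the specific reduced word for $v=(s_0s_2s_4s_5s_3s_4s_2s_0)(s_1s_3s_4s_2s_5s_4s_3s_1)$ to that for $w$ via such an automorphism requires commuting-move manipulations and a check that the induced bijection on radical roots respects the poset and height structure on ${\mathcal B}$; you should expect to fall back on your ``redo the straightening for $v$ directly'' option. Second, in the mixed family your inductive sketch does not yet explain \emph{why} the swapped term $Z_JZ_{I+\delta}$ carries exactly the coefficient $\hat{q}q^{-1}$ and appears only when $\langle wt(I),wt(J)\rangle=0$ with $I<J$; the $\delta$-grading and poset constraints you invoke allow this term but do not by themselves pin down its coefficient or rule it out in the size-$2$ classes. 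That step genuinely requires carrying out the braid-group computation through the middle block $(s_0s_2s_4s_5s_3s_4s_2s_0)$, not just bookkeeping, and is where a proof written out in full would spend most of its effort.
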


\begin{table}[t]\caption{All pairs of sets $(I,J)\in{\mathcal B}\times{\mathcal B}$ such that the size of the equivalence class $[(I,J)]$ is $8$. Each row is an equivalence class.}\label{large classes}
\begin{tabular}{|c c c c c c c c|}
\hline
{\bf Height=1} & {\bf Height=2} & {\bf Height=3} & {\bf Height=4} &{\bf Height=4} &{\bf Height=5} &{\bf Height=6} &{\bf Height=7}\\
\hline \hline
 (1234,$\emptyset$)    & (34,12)    & (24,13)    & (23,14)    & (14,23)    & (13,24)     & (12,34)    & ($\emptyset$,1234) \\ \hline
 (1235,$\emptyset$)    & (35,12)    & (25,13)    & (23,15)    & (15,23)    & (13,25)     & (12,35)    & ($\emptyset$,1235) \\ \hline
 (1245,$\emptyset$)    & (45,12)    & (25,14)    & (24,15)    & (15,24)    & (14,25)     & (12,45)    & ($\emptyset$,1245) \\ \hline
 (1345,$\emptyset$)    & (45,13)    & (35,14)    & (34,15)    & (15,34)    & (14,35)     & (13,45)    & ($\emptyset$,1345) \\ \hline
 (2345,$\emptyset$)    & (45,23)    & (35,24)    & (34,25)    & (25,34)    & (24,35)     & (23,45)    & ($\emptyset$,2345) \\ \hline
 (1345,12)    & (1245,13)    & (1235,14)    & (1234,15)    & (15,1234)    & (14,1235)     & (13,1245)    & (12,1345) \\ \hline
 (2345,12)    & (1245,23)    & (1235,24)    & (1234,25)    & (25,1234)    & (24,1235)     & (23,1245)    & (12,2345) \\ \hline
 (2345,13)    & (1345,23)    & (1235,34)    & (1234,35)    & (35,1234)    & (34,1235)     & (23,1345)    & (13,2345) \\ \hline
 (2345,14)    & (1345,24)    & (1245,34)    & (1234,45)    & (45,1234)    & (34,1245)     & (24,1345)    & (14,2345) \\ \hline
 (2345,15)    & (1345,25)    & (1245,35)    & (1235,45)    & (45,1235)    & (35,1245)     & (25,1345)    & (15,2345) \\ \hline
\end{tabular}\end{table}

\section{The Adjoint Action of \texorpdfstring{${\mathcal U}_q(\mathfrak{g}_{{\bf I^\prime}})$}{Uq(g_{I^\prime})} on ${\mathcal U}_q^+[w]$ and \texorpdfstring{${\mathcal U}_q^+[\widehat{w}]$}{U_q^+[hat w]}}\label{adjoint action section}

In this section we identify certain highest weight vectors for the adjoint action of ${\mathcal U}_q(\mathfrak{g}_{{\bf I^\prime}})$ on the quantum Schubert cell algebras ${\mathcal U}_q^+[w]$ and ${\mathcal U}_q^+[\widehat{w}]$. We will later see that these highest weight vectors will play  an important role in the main results of Section \ref{section_mainresults}. We begin with the following theorem.
\begin{theorem}\label{action is defined}
The algebras ${\mathcal U}_q^+[w]$ and ${\mathcal U}_q^+[\widehat{w}]$ are stable under the adjoint action of ${\mathcal U}_q(\mathfrak{g}_{{\bf I^\prime}})$. Moreover, ${\mathcal U}_q^+[w]$ and ${\mathcal U}_q^+[\widehat{w}]$ are left ${\mathcal U}_q(\mathfrak{g}_{{\bf I^\prime}})$-module algebras with respect to this action.
\end{theorem}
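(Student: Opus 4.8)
The plan is to separate the statement into a formal Hopf-algebraic reduction and an explicit computation on generators. For the formal part, recall that for any Hopf algebra $H$ the left adjoint action $\mathrm{ad}(h)(x)=\sum h_{(1)}xS(h_{(2)})$ makes $H$ into a left $H$-module algebra: the module-algebra axioms reduce to coassociativity, the identity $\sum S(h_{(1)})h_{(2)}=\epsilon(h)1$, and the fact that $\Delta$ and $\epsilon$ are algebra maps. Restricting this action along the Hopf subalgebra inclusion ${\mathcal U}_q(\mathfrak{g}_{{\bf I^\prime}})\hookrightarrow{\mathcal U}_q(\mathfrak{g}_{{\bf I_0}})$ makes ${\mathcal U}_q(\mathfrak{g}_{{\bf I_0}})$ a left ${\mathcal U}_q(\mathfrak{g}_{{\bf I^\prime}})$-module algebra, and any subalgebra invariant under the action inherits a module-algebra structure. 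Hence the theorem reduces to proving that $\mathrm{ad}\,{\mathcal U}_q(\mathfrak{g}_{{\bf I^\prime}})$ preserves ${\mathcal U}_q^+[w]$ and ${\mathcal U}_q^+[\widehat{w}]$.

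I would next reduce this stability to a check on the root-vector generators. From $\Delta(E_j)=K_j^{-1}\otimes E_j+E_j\otimes 1$ and $\Delta(F_j)=1\otimes F_j+F_j\otimes K_j$, the module-algebra rule yields
\[ \mathrm{ad}(E_j)(ab)=\mathrm{ad}(K_j^{-1})(a)\,\mathrm{ad}(E_j)(b)+\mathrm{ad}(E_j)(a)\,b,\qquad \mathrm{ad}(F_j)(ab)=a\,\mathrm{ad}(F_j)(b)+\mathrm{ad}(F_j)(a)\,\mathrm{ad}(K_j)(b). \]
Since $\mathrm{ad}(K_j^{\pm1})$ scales the $Q$-homogeneous component of degree $\mu$ by $q^{\pm\langle\mu,\alpha_j\rangle}$, it preserves both algebras, so a routine induction on the length of a monomial in the generators shows that if, for every $j\in{\bf I^\prime}$, $\mathrm{ad}(E_j)$ and $\mathrm{ad}(F_j)$ carry each $Y_I$ into ${\mathcal U}_q^+[w]$ and each $Z_I,Z_{I+\delta}$ into ${\mathcal U}_q^+[\widehat{w}]$, then they preserve these algebras.

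The key structural input for the generator computation is that $\alpha_1$ is a cominiscule root of $E_6$: every root has $\alpha_1$-coefficient in $\{-1,0,1\}$, the set $\Phi_w$ consists exactly of the positive roots of $\alpha_1$-coefficient $1$, and $\Phi_{\widehat{w}}$ consists of the positive real roots whose $(\alpha_0,\alpha_1)$-coefficients are $(0,1)$ or $(1,2)$. Since $j\notin\{0,1\}$, the operators $\mathrm{ad}(E_j)$ and $\mathrm{ad}(F_j)$ leave these coefficients unchanged, so $\mathrm{ad}(E_j)(Y_I)$ and $\mathrm{ad}(F_j)(Y_I)$ are $Q$-homogeneous of degree $wt(I)\pm\alpha_j$, still of $\alpha_1$-coefficient $1$; and by the PBW theorem the only basis monomials of $\alpha_1$-coefficient $1$ in ${\mathcal U}_q^+[w]$ are the single root vectors $Y_\beta$ ($\beta\in\Phi_w$) — similarly in ${\mathcal U}_q^+[\widehat{w}]$. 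Thus the homogeneous component of ${\mathcal U}_q^+[w]$ in any degree of $\alpha_1$-coefficient $1$ is $kY_\beta$ if that degree is $\beta\in\Phi_w$ and is $0$ otherwise. Consequently, once we know that $\mathrm{ad}(E_j)(Y_I)$ and $\mathrm{ad}(F_j)(Y_I)$ lie in ${\mathcal U}_q^+[w]$, they are automatically forced to equal a scalar multiple of the root vector $Y_{I^\prime}$ with $wt(I^\prime)=wt(I)\pm\alpha_j$ (and $0$ if there is no such $I^\prime$) — the quantum shadow of the classical inclusion $[\mathfrak{g}_{{\bf I^\prime}},\mathrm{rad}(\mathfrak{p})]\subseteq\mathrm{rad}(\mathfrak{p})$, with $\mathrm{rad}(\mathfrak{p})$ the half-spin module.

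The remaining point, which I expect to be the main obstacle, is the containment itself. Writing $\mathrm{ad}(E_j)(x)=E_jx-q^{-\langle wt(x),\alpha_j\rangle}xE_j$ and $\mathrm{ad}(F_j)(x)=(F_jx-xF_j)K_j^{-1}$, one evaluates these on the root vectors using the Levendorskii--Soibelmann straightening relations in ${\mathcal U}_q^+(\mathfrak{g}_{{\bf I}})$, respectively ${\mathcal U}_q^+(\mathfrak{g}_{{\bf I_0}})$, together with the explicit presentations of ${\mathcal U}_q^+[w]$ and ${\mathcal U}_q^+[\widehat{w}]$ established above. The $\mathrm{ad}(E_j)$ case amounts to showing that the straightening correction relating $E_jY_I$ and $Y_IE_j$ involves no root vector from the Levi part; the $\mathrm{ad}(F_j)$ case is more delicate, since a priori $\mathrm{ad}(F_j)(Y_I)$ lies only in ${\mathcal U}_q^+{\mathcal U}_q^0$, and one must check both that its Cartan part vanishes and that what remains already lies in ${\mathcal U}_q^+[w]$. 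A convenient way to organize this is a downward induction on the height function on ${\mathcal B}$, whose base case is $Y_\emptyset$, the highest-root vector: it is annihilated by every $\mathrm{ad}(E_i)$ ($i\in{\bf I}$), being the highest weight vector of the adjoint summand of ${\mathcal U}_q(\mathfrak{g}_{{\bf I}})$, and every lower root vector is produced from it by operators $\mathrm{ad}(F_j)$, $j\in{\bf I^\prime}$; the relevant $\mathrm{ad}(F_j)$-relations are then read off from the presentation, using that $\Phi_w$ and $\Phi_{\widehat{w}}$ are stable under $W_{{\bf I^\prime}}$, which is immediate from cominiscularity since $s_j$ with $j\neq 1$ does not change the $\alpha_1$-coefficient of a root. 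All of this uses only the straightening relations, so the argument goes through over a base field of arbitrary characteristic, consistent with the conventions of the paper.
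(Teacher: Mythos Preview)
Your proposal is correct and follows essentially the same strategy as the paper: reduce to the adjoint action on the root-vector generators and invoke the general Hopf-algebra fact that ${\mathcal U}_q(\mathfrak{g}_{{\bf I_0}})$ is a module algebra over itself under $\mathrm{ad}$. The paper is terser, simply asserting the explicit formulas $\mathrm{ad}(E_j)X_\beta=-qX_{\beta+\alpha_j}$ and $\mathrm{ad}(F_j)X_\beta=-q^{-1}X_{\beta-\alpha_j}$ (or $0$ when the target root is not radical) as directly verifiable, whereas you supply the structural scaffolding---cominiscularity of $\alpha_1$, the resulting one-dimensionality of the relevant weight spaces, and an inductive scheme via straightening---that explains why such formulas must hold before outlining their verification.
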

\begin{proof} For $\beta\in\Phi_w$ or $\Phi_{\widehat{w}}$, let $X_\beta$ be the root vector (in ${\mathcal U}_q^+[w]$ or ${\mathcal U}_q^+[\widehat{w}]$) of degree $\beta$. One can verify that
\begin{align}\label{action on X_betas}
ad(E_j) X_\beta =\begin{cases} -qX_{\beta+\alpha_j}, &\text{if }\beta+\alpha_j\in\Phi_w (\text{or }\Phi_{\widehat{w}}),\\ 0, & otherwise, \end{cases}& &ad(F_j) X_\beta =\begin{cases} -q^{-1}X_{\beta-\alpha_j}, &\text{if }\beta-\alpha_j\in\Phi_w (\text{or }\Phi_{\widehat{w}}),\\ 0, & otherwise, \end{cases}
 \end{align}
for all $j\in {\bf I^\prime}$. Since ${\mathcal U}_q(\mathfrak{g}_{{\bf I_0}})$ is a left ${\mathcal U}_q(\mathfrak{g}_{{\bf I_0}})$-module algebra with respect to the adjoint action, then the equations above, together with the fact that the $K_i$'s act diagonally, prove the desired result.
\end{proof}

The adjoint action induces a $\mathbb{Z}\{\varpi_i\}_{i\in{\bf I^\prime}}$-grading on the algebras ${\mathcal U}_q^+[w]$ and ${\mathcal U}_q^+[\widehat{w}]$. For $\lambda\in\mathbb{Z}\{\varpi_i\}_{i\in{\bf I^\prime}}$, the homogeneous component of degree $\lambda$ is given by
\begin{align*}
&{\mathcal U}_q^+[w] _\lambda = \{x\in {\mathcal U}_q^+[w] : ad(K_i).x=q^{\left<\alpha_i,\lambda\right>}x\text{ for all } i\in{\bf I^\prime}\},\\
&{\mathcal U}_q^+[\widehat{w}] _\lambda = \{x\in {\mathcal U}_q^+[\widehat{w}] : ad(K_i).x=q^{\left<\alpha_i,\lambda\right>}x\text{ for all } i\in{\bf I^\prime}\}.
\end{align*}

We are interested in obtaining explicit decompositions of ${\mathcal U}_q^+[w]$ and ${\mathcal U}_q^+[\widehat{w}]$ into irreducible submodules because certain highest weight vectors play an important role in the main results of Section \ref{section_mainresults}. Define the subalgebras $A[w]\subseteq{\mathcal U}_q^+[w]$ and $A[\widehat{w}]\subseteq{\mathcal U}_q^+[\widehat{w}]$ as follows:
\begin{align*}
&A[w]:=\{x\in{\mathcal U}_q^+[w]:ad(E_i).x=0\text{ for all }i\in{\bf I^\prime}\},\\
&A[\widehat{w}]:=\{x\in{\mathcal U}_q^+[\widehat{w}]:ad(E_i).x=0\text{ for all }i\in{\bf I^\prime}\}.
\end{align*}

\begin{table}[t]
\caption{A list of conjectured generators for the algebra $A[\widehat{w}]$.}
\label{HWVs table}
\begin{tabular}{|l c c|}
\hline
  {\bf Highest Weight Vector} & \begin{tabular}{c}{\bf degree w.r.t.}\\ $\mathbb{Z}\{\varpi_i\}_{i\in{\bf I}^\prime}${\bf -grading}\end{tabular} &\begin{tabular}{c} {\bf degree w.r.t.}\\$\mathbb{Z}_{\geq 0}${\bf -grading}\end{tabular} \\
\hline \hline
$\Omega_1:=Z_{\emptyset}$ & $\varpi_2$    & 1  \\ \hline
$\Omega_2:=Z_{\emptyset+\delta}$ & $\varpi_2$    & 1  \\ \hline
$\Omega_3:=Z_{[1234]}Z_{\emptyset}-qZ_{[34]}Z_{[12]}+q^2Z_{[24]}Z_{[13]}-q^3Z_{[23]}Z_{[14]}$ & $\varpi_6$    & 2  \\ \hline
$\Omega_4:=\sum_{(I,J)\sim (1234,\emptyset)} (-q)^{ht(I,J)}Z_IZ_{J+\delta}$& $\varpi_6$ & 2  \\ \hline
$\!\!\!\begin{array}{ll}\Omega_5:=&\!\!\!Z_{[1234]+\delta}Z_{\emptyset+\delta}-qZ_{[34]+\delta}Z_{[12]+\delta}\\&+q^2Z_{[24]+\delta}Z_{[13]+\delta}-q^3Z_{[23]+\delta}Z_{[14]+\delta}\end{array}$& $\varpi_6$ & 2  \\ \hline
$\Omega_6:=(ad(F_2)\Omega_1)\cdot\Omega_2-q\Omega_1\cdot(ad(F_2)\Omega_2)$ & $\varpi_4$ & 2 \\ \hline
$\!\!\!\begin{array}{ll}\Omega_7:=&\!\!\!(ad(F_2F_4F_5F_6)\Omega_3)\cdot\Omega_2\\&-q(ad(F_4F_5F_6)\Omega_3)\cdot(ad(F_2)\Omega_2)\\&+q^2(ad(F_5F_6)\Omega_3)\cdot(ad(F_4F_2)\Omega_2)\\&-q^3(ad(F_6)\Omega_3)\cdot(ad(F_5F_4F_2)\Omega_2)\\&+q^4\Omega_3\cdot(ad(F_6F_5F_4F_2)\Omega_2)\end{array}$ & $\varpi_3$ & 3 \\ \hline
$\!\!\!\begin{array}{ll}\Omega_8:=&\!\!\!\Omega_1\cdot(ad(F_2F_4F_5F_6)\Omega_5)\\
&-q^{-1}(ad(F_2)\Omega_1)\cdot(ad(F_4F_5F_6)\Omega_5)\\
&+q^{-2}(ad(F_4F_2)\Omega_1)\cdot(ad(F_5F_6)\Omega_5)\\
&-q^{-3}(ad(F_5F_4F_2)\Omega_1)\cdot(ad(F_6)\Omega_5)\\
&+q^{-4}(ad(F_6F_5F_4F_2)\Omega_1)\cdot\Omega_5
\end{array}$ &$\varpi_3$ & 3 \\ \hline
$\Omega_9:=\Omega_3\cdot(ad(F_6)\Omega_4)-q^{-1}(ad(F_6)\cdot\Omega_3)\Omega_4$ & $\varpi_5$ & 4 \\ \hline
$\Omega_{10}:=\Omega_3\cdot(ad(F_6)\Omega_5)-q^{-1}(ad(F_6)\Omega_3)\cdot\Omega_5$ & $\varpi_5$ & 4 \\ \hline
$\Omega_{11}:=\Omega_4\cdot(ad(F_6)\Omega_5)-q^{-1}(ad(F_6)\Omega_4)\cdot\Omega_5$ & $\varpi_5$ & 4 \\ \hline
$\!\!\!\begin{array}{ll}\Omega_{12}:=&\!\!\!(ad(F_6F_5F_4F_3F_2F_4F_5F_6)\Omega_3)\cdot\Omega_5\\
&-q(ad(F_5F_4F_3F_2F_4F_5F_6)\Omega_3)\cdot(ad(F_6)\Omega_5)\\
&+q^2(ad(F_4F_3F_2F_4F_5F_6)\Omega_3)\cdot(ad(F_5F_6)\Omega_5)\\
&-q^3(ad(F_3F_2F_4F_5F_6)\Omega_3)\cdot(ad(F_4F_5F_6)\Omega_5)\\
&+q^4(ad(F_2F_4F_5F_6)\Omega_3)\cdot(ad(F_3F_4F_5F_6)\Omega_5)\\
&+q^4(ad(F_3F_4F_5F_6)\Omega_3)\cdot(ad(F_2F_4F_5F_6)\Omega_5)\\
&-q^5(ad(F_4F_5F_6)\Omega_3)\cdot(ad(F_3F_2F_4F_5F_6)\Omega_5)\\
&+q^6(ad(F_5F_6)\Omega_3)\cdot(ad(F_4F_3F_2F_4F_5F_6)\Omega_5)\\
&-q^7(ad(F_6)\Omega_3)\cdot(ad(F_5F_4F_3F_2F_4F_5F_6)\Omega_5)\\
&+q^8\Omega_3\cdot(ad(F_6F_5F_4F_3F_2F_4F_5F_6)\Omega_5)\end{array}$ & $0$ & 4 \\ \hline
$\!\!\!\begin{array}{ll}\Omega_{13}:=&\!\!\!\Omega_3\cdot(ad(F_6F_5)\Omega_{11})\\
&-q^{-1}(ad(F_6)\Omega_3)\cdot (ad(F_55)\Omega_{11})\\
&+q^{-2}(ad(F_5F_6)\Omega_3)\cdot\Omega_{11}\end{array}$ & $\varpi_4$ & 6 \\ \hline
\end{tabular}\end{table}

We refer to a nonzero element $x\in A[w]\cap{\mathcal U}_q^+[w] _\lambda$ (or $x\in A[\widehat{w}]\cap{\mathcal U}_q^+[\widehat{w}] _\lambda$) as a highest weight vector of weight $\lambda$. We have the following.

\begin{proposition}\label{HWVs2}
The vector
\begin{equation}\Theta:=Y_{[1234]}Y_{\emptyset}-qY_{[34]}Y_{[12]}+q^2Y_{[24]}Y_{[13]}-q^3Y_{[23]}Y_{[14]}\in{\mathcal U}_q^+[w]
\end{equation}
is a highest weight vector of weight $\varpi_6$.
\end{proposition}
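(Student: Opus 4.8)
The plan is to verify directly that $\Theta$ is annihilated by $\mathrm{ad}(E_i)$ for all $i\in{\bf I}^\prime=\{2,3,4,5,6\}$ and that it is homogeneous of weight $\varpi_6$ with respect to the $\mathbb{Z}\{\varpi_i\}_{i\in{\bf I}^\prime}$-grading. The weight computation is the easy bookkeeping part: each $Y_I$ has weight $wt(I)=\varpi_2-\sum_{i\in I}e_i$ as an element of the root lattice, but what matters here is its image in $\mathbb{Z}\{\varpi_i\}_{i\in{\bf I}^\prime}$ under the adjoint $K_i$-action. I would check that $wt([1234])+wt(\emptyset)$, $wt([34])+wt([12])$, $wt([24])+wt([13])$, and $wt([23])+wt([14])$ all lie in the same coset so that the four monomials have a common $\mathrm{ad}(K_i)$-eigenvalue, and that this common degree equals $\varpi_6$; this follows from the explicit realization of the roots in Section \ref{section root system E6} together with the equivalence $(1234,\emptyset)\sim(34,12)\sim(24,13)\sim(23,14)$ recorded in Table \ref{large classes}.

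For the highest-weight condition, the key tool is the formula \eqref{action on X_betas}: for $j\in{\bf I}^\prime$, $\mathrm{ad}(E_j)$ sends a root vector $X_\beta$ to $-qX_{\beta+\alpha_j}$ when $\beta+\alpha_j\in\Phi_w$ and to $0$ otherwise, together with the fact that ${\mathcal U}_q^+[w]$ is a left ${\mathcal U}_q(\mathfrak{g}_{{\bf I}^\prime})$-module algebra (Theorem \ref{action is defined}), so $\mathrm{ad}(E_j)(ab)=(\mathrm{ad}(E_j)a)\,(\mathrm{ad}(K_j^{-1}).b)+a\,(\mathrm{ad}(E_j)b)$ using the comultiplication \eqref{comultiplication formula}. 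First I would translate the action of $\mathrm{ad}(E_j)$ on the sets $I\in{\mathcal B}$: writing $I\leftrightarrow\varpi_2-\sum_{i\in I}e_i$, one checks that adding a simple root $\alpha_j$ (for $j\in\{3,\dots,6\}$, where $\alpha_j=e_{j-1}-e_{j-2}$) corresponds to the combinatorial operation of replacing $j-2\in I$ by $j-1$ when this yields another element of ${\mathcal B}$, while $\alpha_2=e_1+e_2$ corresponds to deleting $\{1,2\}$ from $I$. Then applying $\mathrm{ad}(E_j)$ to $\Theta$ term by term via the twisted Leibniz rule produces a signed sum of products $Y_K Y_L$, and I would check that for each $j$ the contributions cancel in pairs; the powers of $q$ in the definition of $\Theta$ (the alternating $1,-q,q^2,-q^3$) and the $\mathrm{ad}(K_j^{-1})$-eigenvalue factors are precisely calibrated to make this happen. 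One must also use commutation relations among the $Y$'s (the straightening rule, Theorem \ref{action is defined}'s predecessors, i.e. the defining relations of ${\mathcal U}_q^+[w]$) to put everything into the PBW form before comparing coefficients.

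The main obstacle I anticipate is the case $j=4$: the root $\alpha_4=e_3-e_2$ can act nontrivially on several of the sets appearing in $\Theta$ (it exchanges $2\leftrightarrow3$ in an index set), so $\mathrm{ad}(E_4)$ produces the most terms, and verifying the cancellation will require carefully tracking which products $Y_KY_L$ are already in normal form and which need one application of a commutation relation. By contrast $j=6$ (which involves $5\leftrightarrow4$ and touches only $[1234]$, $[34]$, $[24]$, $[23]$ via $4\mapsto5$) and $j=2$ (which can only delete $\{1,2\}$, so it kills $Y_{[1234]}$'s neighbor structure in a controlled way) should be comparatively quick. A clean way to organize all five cases uniformly is to observe that $\Theta$ is, up to a scalar and the $\mathrm{ad}(K)$-twist, exactly the image under the module-algebra multiplication of a lowest-weight vector in a suitable tensor product $V_{\varpi_2}\otimes V_{\varpi_2}$ projected onto the $V_{\varpi_6}$-isotypic component — indeed $\Lambda^2$ of the half-spin/vector-type module of ${\mathcal U}_q(\mathfrak{so}_{10})$ contains $V_{\varpi_6}$ — so that the highest-weight property is inherited from representation theory; if that identification can be made precise it replaces the five explicit cancellations by a single structural argument, and I would pursue that route if the brute-force check for $j=4$ becomes unwieldy.
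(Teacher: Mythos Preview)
The paper states this proposition without proof; it is implicitly left as a direct verification using the explicit adjoint action formulas of \eqref{action on X_betas} and the module-algebra structure from Theorem~\ref{action is defined}. Your proposal is exactly that verification, so your approach matches the paper's.

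One small slip worth flagging: your combinatorial translation of $\mathrm{ad}(E_j)$ is reversed. Since $wt(I)=\varpi_2-\sum_{i\in I}e_i$ and $\alpha_j=e_{j-1}-e_{j-2}$ for $3\le j\le 6$, adding $\alpha_j$ to $wt(I)$ requires $j-1\in I$, $j-2\notin I$, and replaces $j-1$ by $j-2$ (not the other way around). In particular, for $j=6$ none of the eight sets appearing in $\Theta$ contain $5$, so $\mathrm{ad}(E_6)$ already kills every $Y_I$ individually and no cancellation is needed; similarly $\mathrm{ad}(E_2)$ deletes $\{1,2\}$, and you can check that the only set among $\{[1234],\emptyset,[34],[12],[24],[13],[23],[14]\}$ containing $\{1,2\}$ yields a term that pairs with another after the twisted Leibniz rule. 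This reversal does not affect the validity of your plan---you would catch it immediately upon computing---but it does mean the cases $j=2,6$ are even easier than you suggest, and the nontrivial cancellations live at $j=3,4,5$. The alternative representation-theoretic route you sketch (projecting onto the $V_{\varpi_6}$ summand of $V_{\varpi_2}\otimes V_{\varpi_2}$) is sound and is essentially the content of the remark following Theorem~6.3 on braided symmetric algebras.
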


As a ${\mathcal U}_q(\mathfrak{g}_{{\bf I^\prime}})$-module, ${\mathcal U}_q^+[w]$ decomposes into a direct sum of finite-dimensional irreducible submodules. In Theorem \ref{Uqw decomposition} below, we give an explicit description of this decomposition, but first we need the following lemma.

\begin{lemma}\label{combinatorial identity}
For all $d\geq 0$,
\begin{equation}
\sum_{\tiny\begin{array}{c} m,n\geq 0\\ m+2n=d\end{array}} \frac{m+3}{105} {m+5 \choose 5}{n+4 \choose 4}{n+m+7 \choose 4} =
{15+d \choose 15}
\end{equation}
\end{lemma}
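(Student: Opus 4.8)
The plan is to prove the identity
\[
\sum_{\substack{m,n\geq 0\\ m+2n=d}} \frac{m+3}{105}\binom{m+5}{5}\binom{n+4}{4}\binom{n+m+7}{4} = \binom{15+d}{15}
\]
by a generating function argument: since the right-hand side is the coefficient of $t^d$ in $(1-t)^{-16}$, it suffices to show that the left-hand side, summed against $t^d$ over all $d\geq 0$, also equals $(1-t)^{-16}$. Writing $d=m+2n$, this means I need to verify
\[
\sum_{m,n\geq 0} \frac{m+3}{105}\binom{m+5}{5}\binom{n+4}{4}\binom{n+m+7}{4}\, t^{m+2n} = \frac{1}{(1-t)^{16}}.
\]

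The natural route is to recognize the summand as the dimension of a weight space. By Theorem~\ref{Uqw decomposition} (whose proof presumably uses this lemma, but the lemma itself I can check independently) and the Weyl dimension formula for $\mathfrak{so}_{10}$, the dimension of the irreducible module of highest weight $m\varpi_2+n\varpi_6$ should be exactly $\frac{m+3}{105}\binom{m+5}{5}\binom{n+4}{4}\binom{n+m+7}{4}$; meanwhile $Y_\emptyset$ has $\mathbb{Z}_{\geq 0}$-degree $1$ and $\Theta$ has degree $2$, so $Y_\emptyset^m\Theta^n$ sits in degree $m+2n=d$, and the degree-$d$ component of ${\mathcal U}_q^+[w]$ has dimension $\binom{15+d}{15}$ because ${\mathcal U}_q^+[w]$ is a $16$-variable quantum polynomial ring (it is the quantized abelian nilradical $rad(\mathfrak{p})$ of dimension $16$, with PBW basis of monomials in the $16$ generators $Y_I$). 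So the identity is exactly the statement that dimensions match degree by degree in the decomposition of Theorem~\ref{Uqw decomposition}. However, to avoid circularity I will instead give a direct combinatorial/generating-function proof.

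For the direct proof, first I would use the standard identities $\binom{m+5}{5}=\binom{m+5}{5}$ and the fact that $\frac{m+3}{105}\binom{m+5}{5}\binom{n+m+7}{4}$ can be rewritten, after clearing the constant $105=5!\cdot\tfrac{7!}{4!\cdot 3!}\big/\!\cdots$ — more precisely I would expand all binomials as polynomials and recognize
\[
\frac{m+3}{105}\binom{m+5}{5}\binom{n+m+7}{4}\binom{n+4}{4}
\]
as a product of a polynomial in $m$ (of degree $9$) times a polynomial in $n$ (of degree $4$) plus cross terms; then sum the series $\sum_m (\cdots) t^m$ and $\sum_n (\cdots) t^{2n}$ separately using $\sum_{k\geq 0}\binom{k+r}{r}t^k=(1-t)^{-r-1}$ and its derivatives. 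After collecting, the double sum factors (because the summand, while not literally a product, is a sum of finitely many products $p_i(m)q_i(n)$), giving a rational function in $t$ which I would simplify to $(1-t)^{-16}$. Concretely: write $m+m+3 = $ combinations to absorb the linear factor, obtaining a clean closed form; alternatively use the Chu--Vandermonde-type identity $\sum_{m+2n=d}\binom{n+4}{4}\binom{m+n+7}{4}\binom{m+3}{?}$ directly. The cleanest implementation: substitute $\frac{m+3}{105}\binom{m+5}{5}=\frac{1}{30}\binom{m+5}{5}+\frac{1}{70}\binom{m+5}{4}\cdot\frac{?}{?}$... in any case reduce to a sum of terms of the shape $\binom{m+a}{a}\binom{m+n+b}{b}\binom{n+c}{c}t^{m+2n}$, each of which has a known generating function.

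The main obstacle is the bookkeeping: the factor $\frac{m+3}{105}$ and the three binomials do not separate cleanly, so one must either (a) expand into a manageable sum of products $p(m)q(n)$ and sum two geometric-type series, keeping careful track of the rational-function arithmetic, or (b) find the right single hypergeometric identity (a terminating ${}_3F_2$ or ${}_4F_3$ evaluation) that collapses the sum over $n$ for fixed $d$. I expect route (a) to work mechanically once the polynomial $\frac{m+3}{105}\binom{m+5}{5}\binom{m+n+7}{4}\binom{n+4}{4}$ is written in a basis adapted to the two summation variables; the risk is purely computational, and verifying the final rational-function identity $(\text{sum}) = (1-t)^{-16}$ can be done by comparing Taylor coefficients up to degree $15$ (since both sides are, after clearing denominators, polynomial identities of bounded degree in disguise) or by a direct algebraic simplification.
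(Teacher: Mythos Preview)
Your generating-function route is sound in principle but you never carry it out: the expansion of $\tfrac{m+3}{105}\binom{m+5}{5}$ trails off into question marks, and the promised reduction to sums of the shape $\binom{m+a}{a}\binom{m+n+b}{b}\binom{n+c}{c}t^{m+2n}$ is never executed. More seriously, your fallback (``compare Taylor coefficients up to degree $15$'') is circular as stated: checking finitely many coefficients of the generating-function identity only suffices once you know the left side is rational with denominator dividing $(1-t)^{16}$ and bounded numerator, which is essentially the thing to be proved.

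The paper bypasses all of this with a one-line observation you nearly stumbled onto at the end: treat $d$ as a formal variable and argue directly that the \emph{original} left-hand side is a polynomial in $d$ of degree $15$. Indeed $p(m,n):=\tfrac{m+3}{105}\binom{m+5}{5}\binom{n+4}{4}\binom{m+n+7}{4}$ has total degree $14$; substituting $m=d-2n$ gives a polynomial of total degree $14$ in $d,n$; summing over $0\le n\le d/2$ (Faulhaber) raises the degree by one. Since the right-hand side $\binom{15+d}{15}$ is also a polynomial of degree $15$ in $d$, it suffices to verify the identity at any $16$ integer values of $d$, which is a finite check. This is both shorter and avoids the rational-function bookkeeping that you flagged as the main obstacle; your generating-function machinery is unnecessary here.
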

\begin{proof}
Viewing $d$ as a formal variable allows us to view each side of the above identity as a polynomial in $\mathbb{Q}[d]$. Observe that the right-hand side ${15 +d \choose 15}$ has degree $15$. The left-hand side also has degree $15$. This follows from the fact that $p(m,n):=\frac{m+3}{105}{m+5 \choose 5}{n+4 \choose 4}{n+m+7 \choose 4}$, as a polynomial in $m$ and $n$, has total degree $14$. Substituting $m=d-2n$ everywhere in the expression $p(m,n)$ yields a polynomial in $d$ and $n$ of total degree $14$. Finally, summing $n$ over the interval $0\leq n \leq d/2$ gives a polynomial in $d$ of degree $15$. Therefore it is sufficient to verify the identity holds for $16$ values of $d$.
\end{proof}

The following theorem highlights the role $\Theta$ plays in the adjoint action.

\begin{theorem}\label{Uqw decomposition}
As a ${\mathcal U}_q(\mathfrak{g}_{{\bf I^\prime}})$-module, ${\mathcal U}_q^+[w]$ decomposes into a direct sum of submodules as follows:
\begin{equation}
{\mathcal U}_q^+[w] = \bigoplus_{m,n\geq 0} \text{ad}({\mathcal U}_q(\mathfrak{g}_{{\bf I^\prime}})).(Y_{\emptyset}^m\Theta^n).
\end{equation}
Moreover, for every $m,n\geq 0$, $\text{ad}({\mathcal U}_q(\mathfrak{g}_{{\bf I^\prime}})).(Y_{\emptyset}^m\Theta^n)$ is a finite-dimensional irreducible ${\mathcal U}_q(\mathfrak{g}_{{\bf I^\prime}})$-module of highest weight $m\varpi_2+n\varpi_6$.
\end{theorem}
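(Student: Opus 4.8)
The plan is to establish the decomposition in two halves: first show that the right-hand side is contained in ${\mathcal U}_q^+[w]$ as a direct sum (i.e. that the submodules $\mathrm{ad}({\mathcal U}_q(\mathfrak{g}_{{\bf I}^\prime})).(Y_\emptyset^m\Theta^n)$ are irreducible of the stated highest weights and sum directly), and then count graded dimensions to force equality. For the first half, note that $Y_\emptyset$ is a highest weight vector of weight $\varpi_2$ (stated before Theorem~\ref{Uqw decomposition}) and $\Theta$ is a highest weight vector of weight $\varpi_6$ by Proposition~\ref{HWVs2}. I would first check that $Y_\emptyset$ and $\Theta$ commute up to a power of $q$ — this should follow from the Levendorskii–Soibelman straightening rule together with the explicit defining relations of ${\mathcal U}_q^+[w]$, since $Y_\emptyset$ is the highest root vector and $\Theta$ sits in a controlled degree — so that $Y_\emptyset^m\Theta^n$ is (up to scalar) a well-defined highest weight vector of weight $m\varpi_2+n\varpi_6$. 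The submodule it generates is then a highest weight module for ${\mathcal U}_q(\mathfrak{g}_{{\bf I}^\prime})$; to see it is the irreducible integrable module $V(m\varpi_2+n\varpi_6)$, I would verify the integrability/local finiteness of the adjoint action on ${\mathcal U}_q^+[w]$ (which follows from Theorem~\ref{action is defined} and the explicit formulas \eqref{action on X_betas}: each $Y_I$ lies in a finite-dimensional $\mathrm{ad}$-submodule) and then note that an integrable highest weight module over ${\mathcal U}_q(\mathfrak{g}_{{\bf I}^\prime})$ generated by a highest weight vector is irreducible.

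The directness of the sum is a weight argument: the summand $\mathrm{ad}({\mathcal U}_q(\mathfrak{g}_{{\bf I}^\prime})).(Y_\emptyset^m\Theta^n)$ lives entirely in the ${\mathbb Z}_{\ge 0}$-grading degree $m+2n$ (since $Y_\emptyset$ has degree $1$ and $\Theta$ has degree $2$ with respect to that grading). For fixed total degree $d$, the finitely many pairs $(m,n)$ with $m+2n=d$ give submodules with distinct highest weights $m\varpi_2+n\varpi_6$ — distinct because the map $(m,n)\mapsto m\varpi_2+n\varpi_6$ is injective — and distinct-highest-weight integrable irreducibles whose sum is a submodule of the semisimple ${\mathcal U}_q(\mathfrak{g}_{{\bf I}^\prime})$-module ${\mathcal U}_q^+[w]$ must sum directly. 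Here I should first argue that ${\mathcal U}_q^+[w]$ is a direct sum of finite-dimensional irreducibles at all — i.e. semisimplicity of the category of integrable ${\mathcal U}_q(\mathfrak{g}_{{\bf I}^\prime})$-modules appearing here — which again rests on local finiteness of the $\mathrm{ad}$-action established via \eqref{action on X_betas}.

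To finish, I would compare graded dimensions in each ${\mathbb Z}_{\ge 0}$-degree $d$. The left side: $\dim \big({\mathcal U}_q^+[w]\big)_d$ equals the number of PBW monomials $\prod_{I\in{\mathcal B}} Y_I^{n_I}$ with $\sum n_I = d$, i.e. the number of ways to write $d$ as an ordered sum over the $16$ root vectors — this is $\binom{15+d}{15}$, the coefficient in $1/(1-t)^{16}$. The right side: in degree $d$ it is $\bigoplus_{m+2n=d} V(m\varpi_2+n\varpi_6)$, so its dimension is $\sum_{m+2n=d}\dim V(m\varpi_2+n\varpi_6)$. Using the Weyl dimension formula for $\mathfrak{so}_{10}\cong\mathfrak{g}_{{\bf I}^\prime}$ (type $D_5$) one computes $\dim V(m\varpi_2+n\varpi_6) = \tfrac{m+3}{105}\binom{m+5}{5}\binom{n+4}{4}\binom{n+m+7}{4}$; substituting this into the sum and invoking Lemma~\ref{combinatorial identity} gives exactly $\binom{15+d}{15}$. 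Since the right side is a direct sum of submodules of ${\mathcal U}_q^+[w]$ with matching dimension in every degree, the inclusion is an equality.

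The main obstacle I expect is the Weyl-dimension computation identifying $\dim V(m\varpi_2+n\varpi_6)$ with the polynomial $p(m,n)=\tfrac{m+3}{105}\binom{m+5}{5}\binom{n+4}{4}\binom{n+m+7}{4}$ appearing in Lemma~\ref{combinatorial identity}: one must set up the $D_5$ root system, the fundamental weights $\varpi_2$ and $\varpi_6$ (note $\varpi_6$ is a half-spin weight), evaluate $\prod_{\beta\in\Phi_+}\langle \rho + m\varpi_2+n\varpi_6,\beta^\vee\rangle/\langle\rho,\beta^\vee\rangle$, and massage the resulting product of $20$ linear factors into the claimed product of binomials — the combinatorial identity in Lemma~\ref{combinatorial identity} is then exactly what's needed to close the dimension count, but matching it to the Weyl formula is the delicate bookkeeping step. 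A secondary point requiring care is verifying that $Y_\emptyset$ and $\Theta$ really do $q$-commute (so that $Y_\emptyset^m\Theta^n$ spans a one-dimensional highest weight space), rather than producing lower-order correction terms under straightening; this should be handled by a direct check using the explicit relations of ${\mathcal U}_q^+[w]$ together with the fact that $Y_\emptyset$ corresponds to the highest root.
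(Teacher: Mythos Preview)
Your proposal is correct and follows essentially the same approach as the paper: identify $Y_\emptyset^m\Theta^n$ as a highest weight vector of weight $m\varpi_2+n\varpi_6$, compute the dimension of the resulting irreducible via the Weyl dimension formula, and then invoke Lemma~\ref{combinatorial identity} to match the graded dimension $\binom{15+d}{15}$ of ${\mathcal U}_q^+[w]$ in each degree. You supply more detail on integrability, irreducibility, and directness than the paper's terse proof does, but the architecture is identical; note also that the paper records (just after the theorem) that $Y_\emptyset$ and $\Theta$ actually commute, not merely $q$-commute.
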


\begin{proof}
Since $Y_{\emptyset}$ and $\Theta$ are highest weight vectors, it follows that $Y_{\emptyset}^m\Theta^n$ is also a highest weight vector for every $m,n\geq 0$.  The vector $Y_{\emptyset}^m\Theta^n$ has weight $m\varpi_2+n\varpi_6$. The Weyl dimension formula implies
\begin{align*}
\text{dim}_k\left(\text{ad}({\mathcal U}_q(\mathfrak{g}_{{\bf I^\prime}}).(Y_{\emptyset}^m\Theta^n)\right) &= \frac{\prod_{\alpha\in(\Phi_{{\bf I^\prime}})_+}\langle m\varpi_2+n\varpi_6+\rho,\alpha\rangle}{\prod_{\alpha\in(\Phi_{{\bf I^\prime}})_+}\langle\rho,\alpha\rangle}\\
&=\frac{m+3}{105}{m+5 \choose 5}{n+4 \choose 4}{n+m+7 \choose 4},
\end{align*}
where $\rho = \frac{1}{2}\sum_{\alpha_\in(\Phi_{{\bf I^\prime}})_+}\alpha$. Observe that ${\mathcal U}_q^+[w]$ has a $\mathbb{Z}_{\geq 0}$-grading given by $\text{deg}(Y_I)=1$ for all $I\in{\mathcal B}$, and the adjoint action of ${\mathcal U}_q(\mathfrak{g}_{{\bf I^\prime}})$ preserves this grading. The dimension (over $k$) of the degree $d$ part of ${\mathcal U}_q^+[w]$ is ${15+d \choose 15}$, the same as an ordinary polynomial ring in $16$ commuting variables.  Thus, Lemma \ref{combinatorial identity} implies that counting the  dimension on each homogeneous component $({\mathcal U}_q^+[w])_d$ ($d=0,1,...$) gives the desired result.
\end{proof}

The highest weight vectors $Y_\emptyset\in{\mathcal U}_q^+[w]$ and $\Theta\in{\mathcal U}_q^+[w]$ commute. Therefore $A[w]$ is isomorphic to a polynomial ring in two commuting variables. We make the following conjecture concerning the algebra $A[\widehat{w}]$.

\begin{conjecture}\label{conj1}
$ $
\begin{enumerate}[(i)]
\item The  vectors $\Omega_1,...,\Omega_{13}$ listed in Table \ref{HWVs table} are highest weight vectors. The set of monomials $\{\Omega_1^{r_1}\cdots\Omega_{13}^{r_{13}}\}$ with $r_5r_9=0$ is a $k$-basis of $A[\widehat{w}]$.
\item As a ${\mathcal U}_q(\mathfrak{g}_{{\bf I^\prime}})$-module, ${\mathcal U}_q^+[\widehat{w}]$ decomposes into a direct sum of finite-dimensional irreducible submodules as follows:
\[ \mathcal{U}_q^+[\widehat{w}] = \bigoplus_{{\bf r}} ad(\mathcal{U}_q(\mathfrak{g}_{{\bf I^\prime}})).(\Omega_1^{r_1}\cdots\Omega_{13}^{r_{13}}),\]
where the sum runs over all $r_1,...,r_{13}\in\mathbb{Z}_{\geq 0}$ such that $r_5r_9=0$.
\end{enumerate}
\end{conjecture}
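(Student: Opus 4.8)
The plan is to follow the template of the proof of Theorem~\ref{Uqw decomposition}, with the two commuting highest weight vectors $Y_\emptyset,\Theta$ there replaced by the thirteen generators of Table~\ref{HWVs table}. Write $V(\lambda)$ for the irreducible ${\mathcal U}_q(\mathfrak{g}_{{\bf I^\prime}})$-module of highest weight $\lambda$, let $\lambda_i$ and $d_i$ be the $\mathbb{Z}\{\varpi_j\}_{j\in{\bf I^\prime}}$-weight and $\mathbb{Z}_{\geq0}$-degree of $\Omega_i$ recorded in Table~\ref{HWVs table}, and for a weight $\lambda$ and $d\in\mathbb{Z}_{\geq0}$ set $A_{\lambda,d}:=A[\widehat{w}]\cap({\mathcal U}_q^+[\widehat{w}])_\lambda\cap({\mathcal U}_q^+[\widehat{w}])_d$, so $A[\widehat{w}]=\bigoplus_{\lambda,d}A_{\lambda,d}$. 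Call ${\bf r}=(r_1,\dots,r_{13})\in\mathbb{Z}_{\geq0}^{13}$ \emph{admissible} if $r_5r_9=0$. First, part~(ii) of Conjecture~\ref{conj1} is a formal consequence of part~(i): each $({\mathcal U}_q^+[\widehat{w}])_d$ is finite dimensional, hence semisimple as a ${\mathcal U}_q(\mathfrak{g}_{{\bf I^\prime}})$-module (as in Theorem~\ref{Uqw decomposition}), and the multiplicity of $V(\lambda)$ in it equals $\dim_k A_{\lambda,d}$; granting (i), $A_{\lambda,d}$ has as a basis the admissible monomials $\Omega_1^{r_1}\cdots\Omega_{13}^{r_{13}}$ with $\sum_i r_i\lambda_i=\lambda$ and $\sum_i r_id_i=d$, each such monomial is a highest weight vector generating a copy of $V(\lambda)$, and since the admissible monomials span $A[\widehat{w}]$ and ${\mathcal U}_q^+[\widehat{w}]=ad({\mathcal U}_q(\mathfrak{g}_{{\bf I^\prime}})).A[\widehat{w}]$, a dimension count shows the sum of these submodules over all admissible ${\bf r}$ is direct and exhausts ${\mathcal U}_q^+[\widehat{w}]$. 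Thus everything reduces to showing that the admissible monomials form a $k$-basis of $A[\widehat{w}]$.

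Step~1 is to verify that $\Omega_1,\dots,\Omega_{13}$ are highest weight vectors, i.e.\ $ad(E_j)\Omega_i=0$ for all $j\in{\bf I^\prime}$. This is a finite computation. The vectors $\Omega_1=Z_\emptyset$ and $\Omega_2=Z_{\emptyset+\delta}$ are treated exactly as $\Theta$ in Proposition~\ref{HWVs2}, and each later $\Omega_i$ is an alternating ``Clebsch--Gordan'' sum of products $\bigl(ad(F_{j_1}\cdots F_{j_r})\Omega_a\bigr)\cdot\bigl(ad(F_{k_1}\cdots F_{k_s})\Omega_b\bigr)$ with $a,b<i$; using that ${\mathcal U}_q^+[\widehat{w}]$ is a ${\mathcal U}_q(\mathfrak{g}_{{\bf I^\prime}})$-module algebra, the twisted Leibniz rule for $ad(E_j)$, and the action of the $E_j,F_j$ on root vectors from Theorem~\ref{action is defined}, one checks by induction on $i$ that the cross-terms in $ad(E_j)\Omega_i$ telescope. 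The weights $\lambda_i$ and degrees $d_i$ are read off the gradings, so $\Omega_1^{r_1}\cdots\Omega_{13}^{r_{13}}$ has weight $\sum_i r_i\lambda_i$ and degree $\sum_i r_id_i$, and by the Weyl dimension formula for type $D_5$ the module $ad({\mathcal U}_q(\mathfrak{g}_{{\bf I^\prime}})).(\Omega_1^{r_1}\cdots\Omega_{13}^{r_{13}})$ is irreducible of highest weight $\sum_i r_i\lambda_i$ with explicitly known dimension.

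Step~2, the crux, is linear independence of the admissible monomials inside ${\mathcal U}_q^+[\widehat{w}]$. Pass to the associated graded for a suitable filtration — one exists because ${\mathcal U}_q^+[\widehat{w}]$ is an iterated Ore extension — chosen so that the associated graded is a quantum affine space (hence a domain) on the $32$ quantum root vectors and each $\Omega_i$ has a single PBW monomial, with exponent vector ${\bf e}_i\in\mathbb{Z}_{\geq0}^{32}$, as its leading term. Then the leading term of $\Omega_1^{r_1}\cdots\Omega_{13}^{r_{13}}$ is the PBW monomial with exponent $\sum_i r_i{\bf e}_i$, so linear independence follows once one shows that ${\bf r}\mapsto\sum_i r_i{\bf e}_i$ is injective on the admissible cone $\{r_5r_9=0\}$. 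The constraint $r_5r_9=0$ is forced by a single quadratic straightening relation living in the bidegree of $\Omega_5\Omega_9$ (weight $\varpi_5+\varpi_6$, degree $6$) — a quantum Plücker/Garnir identity expressing $\Omega_5\Omega_9$ as a $k$-combination of products $\Omega_i\Omega_j$ with $\{i,j\}\cap\{5,9\}$ a proper subset — which manifests on leading terms as the lattice coincidence ${\bf e}_5+{\bf e}_9=\sum_i c_i{\bf e}_i$ with the $c_i$ supported away from $\{5,9\}$. Isolating this relation and proving it is the \emph{only} coincidence among the points $\sum_i r_i{\bf e}_i$ over admissible ${\bf r}$ is the step requiring genuine care.

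Step~3 is a combinatorial identity generalizing Lemma~\ref{combinatorial identity}, namely
\[
\sum_{\substack{{\bf r}\text{ admissible}\\ \sum_i r_id_i=d}}\dim_k V\!\Bigl(\textstyle\sum_i r_i\lambda_i\Bigr)\;=\;\binom{31+d}{31}\qquad(d\geq0),
\]
whose right-hand side is the Hilbert function of the PBW algebra ${\mathcal U}_q^+[\widehat{w}]$ on its $32$ quantum root vectors; after substituting $d$ for the degree variable, both sides are polynomials in $d$, so, as in Lemma~\ref{combinatorial identity}, it suffices to bound their degree and check finitely many values, or to compare generating functions. Assembling the pieces: Step~2 gives $\dim_k A_{\lambda,d}\ge\#\{{\bf r}\text{ admissible}:\sum_i r_i\lambda_i=\lambda,\ \sum_i r_id_i=d\}$, so $\dim_k({\mathcal U}_q^+[\widehat{w}])_d=\sum_\lambda(\dim_k A_{\lambda,d})(\dim_k V(\lambda))$ is at least the left side of the displayed identity; Step~3 upgrades this to equality in every bidegree, whence the admissible monomials span each $A_{\lambda,d}$ and, being linearly independent, form a $k$-basis of $A[\widehat{w}]$ — which is part~(i), and then part~(ii) follows as in the first paragraph. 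I expect Step~2 — isolating the quantum Plücker relation and proving that $r_5r_9=0$ is the complete list of straightening collapses in $A[\widehat{w}]$ — to be the genuine obstacle; Steps~1 and~3 are bounded, if lengthy, computations.
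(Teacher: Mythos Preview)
The statement you are trying to prove is Conjecture~\ref{conj1}, and the paper does \emph{not} prove it --- it is left open. There is therefore no proof in the paper to compare your proposal against. What the paper does say, in the remarks immediately following the conjecture, is that the constraint $r_5r_9=0$ arises from the explicit relation $\Omega_5\Omega_9=-q^{-6}\Omega_3\Omega_{11}+q^{-2}\Omega_4\Omega_{10}$, that $A[\widehat{w}]$ is conjecturally presented by $\binom{13}{2}$ commutation relations together with this single straightening relation, and that part~(ii) should follow from a dimension count analogous to Theorem~\ref{Uqw decomposition}. Your plan is exactly this outline, fleshed out; so in spirit it matches the authors' intended approach, but neither you nor they have completed it.

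A few substantive gaps in your sketch are worth flagging. In Step~2, the claim that each $\Omega_i$ has a single PBW monomial as leading term under a suitable filtration is not obvious for the higher $\Omega_i$, which are built from iterated adjoint actions, and you give no argument for the injectivity of ${\bf r}\mapsto\sum_i r_i{\bf e}_i$ on the admissible cone beyond asserting that the one known lattice coincidence is the only one. This is precisely the content of the conjecture. In Step~3, your proposed method of verifying the identity by checking finitely many values of $d$ requires that the left side be a polynomial in $d$; but with degree vector $(1,1,2,2,2,2,3,3,4,4,4,4,6)$ and the non-linear constraint $r_5r_9=0$, the left side is a priori only a quasi-polynomial of period dividing $12$, so the argument of Lemma~\ref{combinatorial identity} does not transfer directly --- you would need either to establish polynomiality first or to work with generating functions. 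In short, your outline is the right shape and agrees with what the paper suggests, but the honest obstacles you identify in Step~2 are exactly why the authors recorded this as a conjecture rather than a theorem.
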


We remark that the condition $r_5r_9=0$ in Conjecture \ref{conj1} comes from a linear dependence relation among ordered monomials. We conjecture that there are $13\choose 2$ +1 defining relations for the algebra $A[\widehat{w}]$: $13\choose 2$ commutation relations implying ordered monomials span $A[\widehat{w}]$ together with the linear dependence relation $\Omega_5\Omega_9=-q^{-6}\Omega_3\Omega_{11}+q^{-2}\Omega_4\Omega_{10}$. Some evidence for the validity of part $(ii)$ of Conjecture \ref{conj1} can be obtained by a dimension counting argument similar to the proof of Theorem \ref{Uqw decomposition}.

The highest weight vectors $\Theta$, $\Omega_3$, $\Omega_4$, $\Omega_5$ will play an important role in the main results of Section \ref{section_mainresults}.  Before we present the main results it will be necessary to recall the details on universal $R$-matrices and the universal bialgebra construction of Faddeev, Reshetikhin, and Takhtajan.

\section{Universal $R$-Matrices and the Universal FRT-Bialgebra}

In this section, we will construct the universal bialgebra of Faddeev, Reshetikhin, and Takhtajan \cite{FRT88} associated to the braiding on the quantum half-spin representation of ${\mathcal U}_q(\mathfrak{g}_{{\bf I^\prime}})$. We begin with the reduced expression
\[ w_0^{{\bf I^\prime}} = s_6s_5s_4s_3s_6s_5s_4s_6s_5s_6s_2s_4s_5s_6s_3s_4s_5s_2s_4s_3  \]
for the longest element of $W_{{\bf I^\prime}}$. Using Lusztig's action of the braid group on ${\mathcal U}_q(\mathfrak{g}_{{\bf I^\prime}})$ we obtain root vectors (which depend on our choice of reduced expression for $w_0^{{\bf I^\prime}}$) for the algebra ${\mathcal U}_q(\mathfrak{g}_{{\bf I^\prime}})$.  See, for instance, \cite[I.6.7]{BG}, \cite[\textsection 9.1.B]{CP}, or \cite[Ch. 37]{L}. We use the version of the braid group action given by the formulas in \cite[\textsection 2.3]{Y}. For brevity, label the root vector of degree $-e_i+e_j$ ($i\neq j$) obtained from this construction by $E_{ij}$, the root vector of degree $e_i+e_j$ ($i<j$) by $E_{ij}^\prime$, and the root vector of degree $-e_i-e_j$ ($i>j$) by $E_{ij}^\prime$.  The quantized enveloping algebra ${\mathcal U}_q({\mathfrak{g}_{{\bf I^\prime}}})$ is therefore generated by $E_{ij},E_{ij}^\prime$ ($i\neq j$) together with $K_i^{\pm 1}$ ($i\in{\bf I^\prime}$).

Let $V$ be a vector space with basis $\{v_1,...,v_5\}$, and define the quantum exterior algebra
\[\Lambda_q : =  T(V)/\langle v_\ell^2,v_iv_j+qv_jv_i:1\leq j<i\leq 5\text{ and }1\leq \ell\leq 5\rangle,\]
where $T(V)$ is the tensor algebra of $V$. Thus, $\{u_I: I=\{i_1<i_2<\cdots < i_\ell\}\subseteq\{1,...,5\}\}$ is a basis of $\Lambda_q$. Let ${\mathcal S}$ denote the subalgebra of $\Lambda_q$ generated by $\{u_I: I\in{\mathcal B}\}$.

\begin{definition/proposition}\label{halfspin_defn} There is a $k$-algebra homomorphism $\rho:{\mathcal U}_q(\mathfrak{g}_{{\bf I^\prime}})\to\text{End}_k({\mathcal S})$ (called the quantum half-spin representation) defined as follows:
\begin{align*}
&\rho(E_{ij})(u_Iv_j) = (-q)^{i-j-\text{sign}(i-j)}u_Iv_i,   &   &(i\neq j\text{ and }j\notin I),\\
&\rho(E_{ij})(u_I) = 0,                                                        &   &(i\neq j\text{ and }j\notin I),\\
&\rho(E_{ij}^\prime)(u_Iv_iv_j) = (-q)^{c(i,j,I)}u_I,        &   &(i<j\text{ and } i,j\notin I), \\
&\rho(E_{ij}^\prime)(u_I) = (-q)^{c(i,j,I)}u_Iv_jv_i,        &   &(i>j), \\
&\rho(E_{ij}^\prime)(u_I) =0,                                            &   &(i<j\text{ and } \{i,j\}\not\subseteq I),\\
&\rho(K_i)(u_I) = q^{\left<\alpha_i,wt(I)\right>}u_I, & &(i\in {\bf I^\prime}),
\end{align*}
where $c(i,j,I)=sign(i-j)(i+j-3-2|I|)$.
\end{definition/proposition}

The quantum half-spin representation is irreducible with highest weight $\varpi_2$. The vector $u_{\emptyset}$ is a highest weight vector (c.f.  Figure \ref{poset B}). Observe also that the set of weights for the representation ${\mathcal S}$ is exactly $\Phi_w$. In fact, the vector subspace of ${\mathcal U}_q^+[w]$ spanned by the root vectors $Y_I$ ($I\in{\mathcal B}$) is stable under the adjoint action of ${\mathcal U}_q(\mathfrak{g}_{{\bf I^\prime}})$ and we have the following

\begin{figure}
\caption{The Poset ${\mathcal B}$ with the action of the negative root vectors $F_i$ ($i\in{\bf I^\prime}$) indicated by the arrows. The positive root vectors $E_i$ ($i\in{\bf I^\prime}$) act in the opposite direction.}
\label{poset B}
\[\xymatrix{
&&&&[23]\ar@{<-}[r]^3\ar[d]^5&[13]\ar[d]^5&[12]\ar[l]_4&\emptyset\ar[l]_2\\
&&[1234]\ar@{<-}[r]^2\ar[d]^6&[34]\ar@{<-}[r]^4\ar[d]^6&[24]\ar@{<-}[r]^3\ar[d]^6&[14]\ar[d]^6\\
&&[1235]\ar@{<-}[r]^2\ar[d]^5&[35]\ar@{<-}[r]^4\ar[d]^5&[25]\ar@{<-}[r]^3&[15]\\
[2345]\ar@{<-}[r]^3&[1345]\ar@{<-}[r]^4&[1245]\ar@{<-}[r]^2&[45] &&}\]
\end{figure}
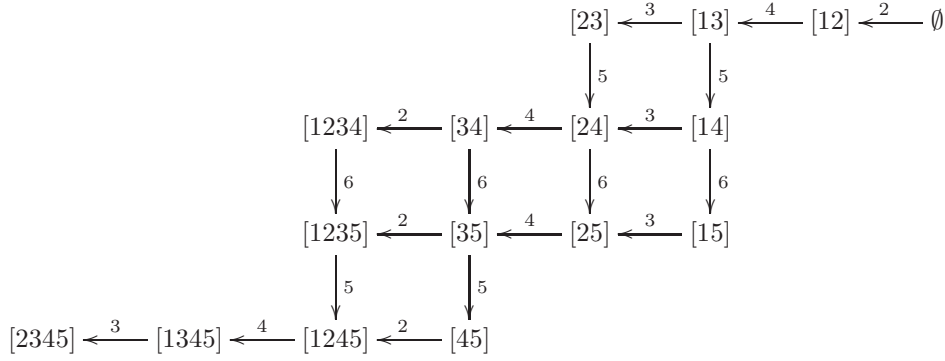

\begin{theorem} There is an isomorphism $\displaystyle{\varphi: \bigoplus_{I\in{\mathcal B}}kY_I\to {\mathcal S}}$ of ${\mathcal U}_q(\mathfrak{g}_{{\bf I^\prime}})$-modules that sends $Y_\emptyset$ to the highest weight vector $u_{\emptyset}$. Under this isomorphism, $Y_I$ maps to $(-q)^{ht(\emptyset)-ht(I)}u_I$, where $ht:{\mathcal B}\to\mathbb{N}$ is a height function on the graded poset ${\mathcal B}$.
\end{theorem}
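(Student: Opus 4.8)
Proof proposal.

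The plan is to pin down $\varphi$ by combining a soft representation-theoretic argument with a single explicit computation on the poset ${\mathcal B}$. First I would observe that $\bigoplus_{I\in{\mathcal B}}kY_I$ is the ${\mathcal U}_q(\mathfrak{g}_{{\bf I^\prime}})$-submodule of ${\mathcal U}_q^+[w]$ mentioned just before the statement (Theorem \ref{action is defined}), that $Y_\emptyset$ is a highest weight vector of weight $\varpi_2$ for this action --- since $wt(\emptyset)=\varpi_2=\theta$ is the highest root, $\theta+\alpha_j\notin\Phi$ for $j\in{\bf I^\prime}$, so $\operatorname{ad}(E_j)Y_\emptyset=0$ by \eqref{action on X_betas} --- and that the module is cyclic on $Y_\emptyset$: by \eqref{action on X_betas} one reaches $Y_I$ from $Y_\emptyset$ by applying the operators $\operatorname{ad}(F_j)$ ($j\in{\bf I^\prime}$) along a saturated chain from $\emptyset$ to $I$ in ${\mathcal B}$, each step legitimate because consecutive elements of ${\mathcal B}$ differ in weight by a simple root of $\mathfrak{g}_{{\bf I^\prime}}$ (Figure \ref{poset B}). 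Since $\dim_k\bigoplus_{I}kY_I=|{\mathcal B}|=16=\dim_k{\mathcal S}$ and ${\mathcal S}$ is irreducible of highest weight $\varpi_2$, a cyclic highest weight module of highest weight $\varpi_2$ and dimension $16$ is forced to be isomorphic to ${\mathcal S}$; after rescaling, there is a unique isomorphism $\varphi\colon\bigoplus_{I}kY_I\to{\mathcal S}$ with $\varphi(Y_\emptyset)=u_\emptyset$, uniqueness coming from cyclicity.

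Next I would reduce the explicit formula to one coefficient. All weight spaces of ${\mathcal S}$, hence of $\bigoplus_I kY_I$, are one-dimensional and $\varphi$ preserves weights, so $\varphi(Y_I)=c_I\,u_I$ for scalars $c_I\in k^\times$ with $c_\emptyset=1$. Fix a covering relation $I\gtrdot J$ in ${\mathcal B}$ with edge label $n\in{\bf I^\prime}$, so that $wt(J)=wt(I)-\alpha_n$. By \eqref{action on X_betas}, $\operatorname{ad}(F_n)Y_I=-q^{-1}Y_J$; applying the module map $\varphi$ yields $-q^{-1}c_J\,u_J=c_I\,(F_n\cdot u_I)$. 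The crucial claim is that $F_n\cdot u_I=u_J$, i.e.\ the coefficient is exactly $1$; granting it, $c_J=-q\,c_I$. Since $\emptyset$ is the maximum of the graded poset ${\mathcal B}$ and every $I$ is joined to $\emptyset$ by a saturated chain with $ht(\emptyset)-ht(I)$ edges, induction on $ht(\emptyset)-ht(I)$ then gives $c_I=(-q)^{ht(\emptyset)-ht(I)}$, which is the assertion of the theorem; path-independence of the recursion is automatic, being forced by the already-established existence of $\varphi$. (One could instead bypass the first paragraph: \emph{define} $\varphi$ by $Y_I\mapsto(-q)^{ht(\emptyset)-ht(I)}u_I$, note that it intertwines each $K_j^{\pm1}$ by the matching of weights, and check that it intertwines each $E_j,F_j$ --- the $F_n$-compatibility being exactly the coefficient-$1$ claim and the $E_n$-compatibility reducing to it via $\operatorname{ad}(E_n)Y_J=-qY_I$.)

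The coefficient-$1$ claim is the computational heart and the step I expect to be the main obstacle. One identifies the Chevalley generators of ${\mathcal U}_q(\mathfrak{g}_{{\bf I^\prime}})$ among its root vectors: for $n\in\{3,\dots,6\}$, $F_n$ is the root vector of degree $-\alpha_n=e_{n-2}-e_{n-1}$, namely $E_{n-1,n-2}$, while $F_2$ is the root vector of degree $-\alpha_2=-e_1-e_2$, namely $E_{21}^\prime$; one then reads off the action on the monomial basis $\{u_I\}$ of ${\mathcal S}$ from Definition/Proposition \ref{halfspin_defn}. For $n\in\{3,\dots,6\}$ the operator $F_n$ relabels $n-2\mapsto n-1$ in $I$: the exponent $(-q)^{(n-1)-(n-2)-\operatorname{sign}(1)}=(-q)^{0}$ is trivial, and the two $\Lambda_q$-reorderings involved --- first pulling $v_{n-2}$ out to the end of $u_I$, then pulling $v_{n-1}$ back into sorted position --- contribute mutually inverse powers $(-q^{-1})^{t}$ and $(-q)^{t}$ (with $t$ the number of letters of $I$ exceeding $n-2$), so the net coefficient is $1$. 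For $n=2$ the operator $F_2$ adjoins $\{1,2\}$ to $I$, and here $(-q)^{c(2,1,I)}=(-q)^{-2|I|}=q^{-2|I|}$ exactly cancels the reordering power $q^{2|I|}$ incurred by moving $v_1v_2$ past the $|I|$ (all larger) letters of $u_I$, again giving coefficient $1$. In this way all twenty edges of ${\mathcal B}$ shown in Figure \ref{poset B} are covered by two uniform computations --- sixteen by the relabeling argument, four by the $F_2$ argument --- and the actual labor is the careful bookkeeping of the $\Lambda_q$-signs, which stays manageable precisely because they always occur in cancelling pairs.
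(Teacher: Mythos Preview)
The paper states this theorem without proof; it is presented as an immediate consequence of the adjoint-action formulas in equation \eqref{action on X_betas} together with the explicit half-spin formulas in Definition/Proposition \ref{halfspin_defn}, and the verification is left implicit. Your proposal supplies precisely that verification: an abstract isomorphism via highest-weight theory, followed by an edge-by-edge determination of the scalar $c_I$ along the poset $\mathcal B$. The recursion $c_J=-q\,c_I$ for a covering $I\gtrdot J$ is exactly the right mechanism, and your bookkeeping of the $\Lambda_q$-reorderings is correct.

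One point deserves a remark. In your third paragraph you identify $F_n$ with the root vector $E_{n-1,n-2}$ (for $n\in\{3,\dots,6\}$) and $F_2$ with $E_{21}'$. These root vectors are, by the paper's construction, obtained from Lusztig's braid group action along a \emph{specific} reduced expression for $w_0^{\mathbf{I}'}$, so a priori they agree with the Chevalley generators only up to scalar. That they agree on the nose is a standard property of Lusztig's symmetries (namely $T_w E_i=E_j$ when $w(\alpha_i)=\alpha_j$ is simple, and the analogous statement on the negative side), but you should either cite this or, as you suggest parenthetically, bypass it by \emph{defining} $\varphi(Y_I)=(-q)^{ht(\emptyset)-ht(I)}u_I$ and checking directly that it intertwines $E_j,F_j,K_j^{\pm1}$. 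In that direct route the only input needed is $\rho(F_n)u_I=u_J$ for each edge, which your two uniform computations already establish once the identification of $F_n$ is granted; so the circularity is only apparent and the cleanest write-up is probably the direct one.
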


We now recall the details on constructing the $R$-matrix for ${\mathcal S}$. For every $i,j$ with $1\leq i<j\leq 5$, define $\varphi_{ij}:= \text{exp}_q((1-q^{-2})E_{ij}\otimes E_{ji})$ and $\varphi_{ij}^\prime = \text{exp}_q((1-q^{-2})E_{ij}^\prime\otimes E_{ji}^\prime)$,
where
\[ \text{exp}_q(x) = \sum_{n=0}^\infty\frac{q^{n(n+1)/2}}{[n]_q!}x^n,  \hspace{.6in} [n]_q=\frac{q^n-q^{-n}}{q-q^{-1}},  \hspace{.6in} [n]_q! = [n]_q[n-1]_q\cdots [1]_q,  \]
and let
\begin{equation*}
{\mathcal R}=(\varphi_{45}\varphi_{35}\varphi_{25}\varphi_{15})(\varphi_{34}\varphi_{24}\varphi_{14})(\varphi_{23}\varphi_{13})\varphi_{12}(\varphi_{45}^\prime\varphi_{35}^\prime\varphi_{25}^\prime\varphi_{15}^\prime)(\varphi_{34}^\prime\varphi_{24}^\prime\varphi_{14}^\prime)(\varphi_{23}^\prime\varphi_{13}^\prime)\varphi_{12}^\prime.
\end{equation*}

Since $X^2$ ($X=E_{ij},E_{ij}^\prime$: $i\neq j$) acts trivially on ${\mathcal S}$, only finitely many terms in $(\rho\otimes\rho)({\mathcal R})$ act nontrivially on ${\mathcal S}\otimes{\mathcal S}$. Let $B:{\mathcal S}\otimes{\mathcal S}\to{\mathcal S}\otimes{\mathcal S}$ denote the linear map given by
\begin{equation*}
B(u_I\otimes u_J) = q^{\left<wt(I),wt(J)\right>}u_I\otimes u_J.
\end{equation*}
The universal $R$-matrix associated to ${\mathcal S}\otimes{\mathcal S}$ is
\begin{equation*}
\widehat{R}:=\tau\circ B\circ \left((\rho\otimes\rho)({\mathcal R})\right)\in End_k({\mathcal S}\otimes{\mathcal S}),
\end{equation*}
where $\tau$ is the flip map $\tau(u\otimes v)=v\otimes u$.  One of the most important properties of universal $R$-matrices is given in the following theorem (see, e.g., \cite{CP, KS97}).

\begin{theorem} The universal $R$-matrix $\widehat{R}:{\mathcal S}\otimes{\mathcal S}\to{\mathcal S}\otimes{\mathcal S}$ is an automorphism of ${\mathcal U}_q(\mathfrak{g}_{{\bf I^\prime}})$-modules. Moreover, $\widehat{R}$ satisfies the braid-version of the quantum Yang-Baxter equation
\[ \widehat{R}_{12}\widehat{R}_{23}\widehat{R}_{12} =\widehat{R}_{23}\widehat{R}_{12}\widehat{R}_{23},\]
where $\widehat{R}_{ij}:{\mathcal S}^{\otimes 3}\to{\mathcal S}^{\otimes 3}$ acts via $\widehat{R}$ on the $i$-th and $j$-th tensor components and the identity on the remaining component.
\end{theorem}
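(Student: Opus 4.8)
The plan is to recognize $\widehat R$ as the flip composed with the action on ${\mathcal S}\otimes{\mathcal S}$ of the universal $R$-matrix of ${\mathcal U}_q(\mathfrak{g}_{{\bf I^\prime}})$, and then to deduce both assertions from the standard axioms of a quasitriangular Hopf algebra. First I would recall that the ``unipotent'' part of the universal $R$-matrix of ${\mathcal U}_q(\mathfrak{g}_{{\bf I^\prime}})$ --- Lusztig's quasi-$R$-matrix --- admits the Levendorskii--Soibelman product expansion $\prod_{\beta}\exp_{q_\beta}\!\big((q_\beta-q_\beta^{-1})\,E_\beta\otimes F_\beta\big)$, the product being taken over the positive roots $\beta$ of $\mathfrak{g}_{{\bf I^\prime}}$ in the convex order determined by a reduced expression of $w_0^{{\bf I^\prime}}$, with $E_\beta,F_\beta$ the associated PBW root vectors. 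The reduced expression $w_0^{{\bf I^\prime}}=s_6s_5s_4s_3s_6s_5s_4s_6s_5s_6s_2s_4s_5s_6s_3s_4s_5s_2s_4s_3$ was chosen precisely so that the PBW root vectors produced by Lusztig's braid action (the formulas in \cite[\textsection 2.3]{Y}) are, up to scalars, the $E_{ij}$ and $E_{ij}^\prime$; hence ${\mathcal R}$ as defined in the text is this quasi-$R$-matrix, and the full universal $R$-matrix of ${\mathcal U}_q(\mathfrak{g}_{{\bf I^\prime}})$ is ${\mathcal R}^{\mathrm{full}}:=\overline B\cdot{\mathcal R}$, where $\overline B$ is the Cartan factor acting on weight vectors of weights $\mu,\nu$ by $q^{\langle\mu,\nu\rangle}$. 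Since $\Phi_w$ is exactly the set of weights of ${\mathcal S}$ and $wt$ records the weight, the image of $\overline B$ under $\rho\otimes\rho$ is precisely the operator $B$. As $E_{ij}^2$ and $(E_{ij}^\prime)^2$ act as $0$ on ${\mathcal S}$, all the exponentials truncate, so $(\rho\otimes\rho)({\mathcal R})$, and therefore $\widehat R=\tau\circ B\circ(\rho\otimes\rho)({\mathcal R})$, is a well-defined endomorphism of ${\mathcal S}\otimes{\mathcal S}$; it is invertible because each $\varphi_{ij},\varphi_{ij}^\prime$ is (its inverse being the analogous $\exp_{q^{-1}}$-type series, which again truncates on ${\mathcal S}$) and $B,\tau$ are invertible.

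For the module-automorphism statement I would invoke the intertwining identity ${\mathcal R}^{\mathrm{full}}\,\Delta(x) = \Delta^{\mathrm{op}}(x)\,{\mathcal R}^{\mathrm{full}}$, valid for all $x\in{\mathcal U}_q(\mathfrak{g}_{{\bf I^\prime}})$. Applying $\rho\otimes\rho$ and pre-composing with $\tau$, and using that $\tau\circ(\rho\otimes\rho)(\Delta^{\mathrm{op}}(x)) = (\rho\otimes\rho)(\Delta(x))\circ\tau$, one obtains $\widehat R\circ(\rho\otimes\rho)(\Delta(x)) = (\rho\otimes\rho)(\Delta(x))\circ\widehat R$ for all $x$; that is, $\widehat R$ commutes with the diagonal ${\mathcal U}_q(\mathfrak{g}_{{\bf I^\prime}})$-action on ${\mathcal S}\otimes{\mathcal S}$. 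Combined with the invertibility noted above, this shows $\widehat R$ is an automorphism of ${\mathcal U}_q(\mathfrak{g}_{{\bf I^\prime}})$-modules.

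For the braid Yang--Baxter equation I would use the two hexagon axioms $(\Delta\otimes\mathrm{id})({\mathcal R}^{\mathrm{full}}) = {\mathcal R}^{\mathrm{full}}_{13}{\mathcal R}^{\mathrm{full}}_{23}$ and $(\mathrm{id}\otimes\Delta)({\mathcal R}^{\mathrm{full}}) = {\mathcal R}^{\mathrm{full}}_{13}{\mathcal R}^{\mathrm{full}}_{12}$ together with the intertwining identity; the standard manipulation then yields the unflipped Yang--Baxter equation ${\mathcal R}^{\mathrm{full}}_{12}{\mathcal R}^{\mathrm{full}}_{13}{\mathcal R}^{\mathrm{full}}_{23} = {\mathcal R}^{\mathrm{full}}_{23}{\mathcal R}^{\mathrm{full}}_{13}{\mathcal R}^{\mathrm{full}}_{12}$ on ${\mathcal S}^{\otimes 3}$ (legitimate here since all exponentials truncate, so only finitely many terms act nontrivially on each leg). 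Writing $\widehat R_{i,i+1} = \tau_{i,i+1}\circ(\rho^{\otimes 3})({\mathcal R}^{\mathrm{full}}_{i,i+1})$ and conjugating by the flips $\tau_{12},\tau_{23}$ converts this into the braid form $\widehat R_{12}\widehat R_{23}\widehat R_{12} = \widehat R_{23}\widehat R_{12}\widehat R_{23}$.

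I expect the only non-routine step to be the verification in the first paragraph that the explicit finite product $(\varphi_{45}\varphi_{35}\varphi_{25}\varphi_{15})(\varphi_{34}\varphi_{24}\varphi_{14})(\varphi_{23}\varphi_{13})\varphi_{12}(\varphi_{45}^\prime\varphi_{35}^\prime\varphi_{25}^\prime\varphi_{15}^\prime)(\varphi_{34}^\prime\varphi_{24}^\prime\varphi_{14}^\prime)(\varphi_{23}^\prime\varphi_{13}^\prime)\varphi_{12}^\prime$ genuinely is the quasi-$R$-matrix attached to our reduced expression of $w_0^{{\bf I^\prime}}$: one must check that the displayed ordering of the factors $\varphi_{ij},\varphi_{ij}^\prime$ is the convex order on the positive roots of $\mathfrak{g}_{{\bf I^\prime}}$ coming from that reduced expression, that the $E_{ij},E_{ij}^\prime$ coincide with the corresponding PBW root vectors, and that the normalization $(1-q^{-2})$ (rather than $q_\beta-q_\beta^{-1}$; here $\mathfrak{so}_{10}$ is simply-laced, so $q_\beta=q$ throughout) together with the signs and powers of $-q$ match the height-function normalization of the module isomorphism $\varphi\colon\bigoplus_{I\in{\mathcal B}}kY_I\to{\mathcal S}$ and are absorbed into $B$. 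Once this bookkeeping is in place, everything else follows from the general theory of quasitriangular Hopf algebras, as in \cite{CP,KS97}.
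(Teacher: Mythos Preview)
Your proposal is correct and aligns with the paper's treatment: the paper does not give a proof of this theorem at all, but simply states it as a standard fact with the citation ``(see, e.g., \cite{CP, KS97}).'' Your sketch is precisely the argument one extracts from those references---recognizing $\widehat R$ as the flip composed with the image of the universal $R$-matrix of the quasitriangular Hopf algebra ${\mathcal U}_q(\mathfrak{g}_{{\bf I^\prime}})$, then invoking the intertwining and hexagon axioms---so you are supplying the details the paper deliberately omits rather than taking a different route.
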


\noindent\emph{Remark:}
In \cite{BZ}, Berenstein and Zwicknagl define a family of quadratic algebras called braided symmetric algebras. The quantum Schubert cell algebra ${\mathcal U}_q^+[w]$ falls into this family. In \cite[Thm. 5.4]{Z}, Zwicknagl proves that the quotient of the tensor algebra $T({\mathcal S})$ by the ideal generated by the $-1$-eigenspace of $\widehat{R}$ (which is ${\mathcal U}_q(\mathfrak{g}_{{\bf I^\prime}}).(u_{[12]}\otimes u_{\emptyset}-qu_{\emptyset}\otimes u_{[12]})$ in our case) is isomorphic to ${\mathcal U}_q^+[w]$. Moreover, $Y_I\mapsto [u_I]$ (for $I\in{\mathcal B}$) defines an algebra isomorphism from ${\mathcal U}_q^+[w]$ onto $T(\mathcal S)/\langle{\mathcal U}_q(\mathfrak{g}_{{\bf I^\prime}}).(u_{[12]}\otimes u_{\emptyset}-qu_{\emptyset}\otimes u_{[12]})\rangle$.

We wish to construct a bialgebra ${\mathcal A}$ so that ${\mathcal S}$ is a right ${\mathcal A}$-comodule and $\widehat{R}:{\mathcal S}\otimes{\mathcal S}\to{\mathcal S}\otimes {\mathcal S}$ is an automorphism of right ${\mathcal A}$-comodules. Among all bialgebras satisfying these conditions, we want ${\mathcal A}$ to satisfy a universal property with respect to this condition. More precisely, we have the following definition and proposition, due to Faddeev, Reshetikhin, and Takhtajan (for details see, e.g., \cite[\textsection  9.1-9.2]{KS97}).

\begin{definition}[The FRT-Bialgebra ${\mathcal A}$]
If
\begin{equation}
\widehat{R}(u_I\otimes u_J)= \sum_{K,L\in{\mathcal B}} R_{IJ}^{KL}u_L\otimes u_K,
\end{equation}
where $R_{IJ}^{KL}\in k$ and $I,J,K,L\in{\mathcal B}$, then
the \emph{FRT-bialgebra} ${\mathcal A}$ is generated by $\{X_{IJ}:I,J\in{\mathcal B}\}$ and has defining relations
\begin{equation}
\label{FRT-definition}\sum_{K,L\in{\mathcal B}} R_{KL}^{TS}X_{KI}X_{LJ}=\sum_{K,L\in{\mathcal B}}R_{IJ}^{KL}X_{SL}X_{TK}
\end{equation}
for all $S,T,I,J\in{\mathcal B}$. The comultiplication and counit are defined by $\Delta_{\mathcal A}(X_{IJ})= \sum_{K\in{\mathcal B}} X_{IK}\otimes X_{KJ}$ and $\epsilon_{\mathcal A}(X_{IJ})=\delta_{IJ}$.
\end{definition}

The following can be found in \cite[Section 9.1.1, Proposition 2]{KS97}.

\begin{proposition} The linear map $\rho_{{\mathcal A}}:{\mathcal S}\to{\mathcal S}\otimes{\mathcal A}$ defined by $\rho_{\mathcal A}(u_I)=\sum_{J\in{\mathcal B}}u_J\otimes X_{JI}$ gives ${\mathcal S}$ the structure of a right ${\mathcal A}$-comodule so that $\widehat{R}:{\mathcal S}\otimes{\mathcal S}\to{\mathcal S}\otimes{\mathcal S}$ is a homomorphism of right ${\mathcal A}$-comodules. If ${\mathcal A}^\prime$ is another bialgebra such that ${\mathcal S}$ is a right ${\mathcal A}^\prime$-comodule (with structure map $\rho_{{\mathcal A}^\prime}:{\mathcal S}\to{\mathcal S}\otimes {\mathcal A}^\prime$) and $\widehat{R}$ a homomorphism of right ${\mathcal A}^\prime$-comodules, then there is a unique homomorphism of bialgebras $\psi:{\mathcal A}\to{\mathcal A}^\prime$ such that $(id\otimes\psi)\circ\rho_{\mathcal A} = \rho_{{\mathcal A}^\prime}$.
\end{proposition}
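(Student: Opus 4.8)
The plan is to check the three assertions — that $\rho_{\mathcal A}$ is a coaction, that $\widehat R$ is a morphism of $\mathcal A$-comodules, and that $\mathcal A$ is universal for this property — essentially by unwinding definitions, since each reduces to a bookkeeping identity among the generators $X_{IJ}$.

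First I would verify the coaction axioms. Applying $\mathrm{id}\otimes\Delta_{\mathcal A}$ to $\rho_{\mathcal A}(u_I)=\sum_J u_J\otimes X_{JI}$ and inserting $\Delta_{\mathcal A}(X_{JI})=\sum_K X_{JK}\otimes X_{KI}$ produces $\sum_{J,K}u_J\otimes X_{JK}\otimes X_{KI}$, which is manifestly $(\rho_{\mathcal A}\otimes\mathrm{id})\circ\rho_{\mathcal A}(u_I)$; similarly $(\mathrm{id}\otimes\epsilon_{\mathcal A})\circ\rho_{\mathcal A}(u_I)=\sum_J u_J\delta_{JI}=u_I$. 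So coassociativity and counitality hold with no relations needed, and $\mathcal S$ is a right $\mathcal A$-comodule. The tensor square then carries the standard diagonal comodule structure $\rho^{(2)}(u_I\otimes u_J)=\sum_{K,L}u_K\otimes u_L\otimes X_{KI}X_{LJ}$.

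Next, to see that $\widehat R$ is a comodule morphism I would expand $\rho^{(2)}\circ\widehat R$ and $(\widehat R\otimes\mathrm{id})\circ\rho^{(2)}$ on a basis vector $u_I\otimes u_J$, using $\widehat R(u_I\otimes u_J)=\sum_{K,L}R_{IJ}^{KL}u_L\otimes u_K$, and compare the coefficient of a fixed basis vector $u_S\otimes u_T$ of $\mathcal S\otimes\mathcal S$ in the $\mathcal S\otimes\mathcal S$-leg. The two sides give $\sum_{K,L}R_{IJ}^{KL}X_{SL}X_{TK}$ and $\sum_{K,L}R_{KL}^{TS}X_{KI}X_{LJ}$ respectively, so equality for all $S,T,I,J$ is \emph{exactly} the defining relation \eqref{FRT-definition}. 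Hence $\widehat R$ intertwines the comodule structures precisely because those relations hold in $\mathcal A$ by construction. The only care needed here is to keep the index conventions of \eqref{FRT-definition} (the transposition $R_{IJ}^{KL}\leftrightarrow R_{KL}^{TS}$ and the flip $\tau$ built into $\widehat R$) aligned with the tensor-square coaction — this is the one place a sign or index slip could creep in.

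Finally, for the universal property I would argue as follows. Given a bialgebra $\mathcal A'$ with right coaction $\rho_{\mathcal A'}$, write $\rho_{\mathcal A'}(u_I)=\sum_J u_J\otimes X'_{JI}$; since $\{u_J\}$ is a basis this uniquely defines elements $X'_{JI}\in\mathcal A'$, and coassociativity and counitality of $\rho_{\mathcal A'}$ force $\Delta_{\mathcal A'}(X'_{JI})=\sum_K X'_{JK}\otimes X'_{KI}$ and $\epsilon_{\mathcal A'}(X'_{JI})=\delta_{JI}$. Repeating the Step~2 computation inside $\mathcal A'$ shows that the hypothesis ``$\widehat R$ is an $\mathcal A'$-comodule map'' is equivalent to the $X'_{IJ}$ satisfying the relations \eqref{FRT-definition}; hence $X_{IJ}\mapsto X'_{IJ}$ extends to an algebra homomorphism $\psi:\mathcal A\to\mathcal A'$ by the universal property of the presentation of $\mathcal A$, and the formulas just displayed for $\Delta_{\mathcal A'},\epsilon_{\mathcal A'}$ make $\psi$ a bialgebra homomorphism. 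The identity $(\mathrm{id}\otimes\psi)\circ\rho_{\mathcal A}=\rho_{\mathcal A'}$ reads $\sum_J u_J\otimes\psi(X_{JI})=\sum_J u_J\otimes X'_{JI}$, which both forces $\psi(X_{JI})=X'_{JI}$ on generators (giving uniqueness) and confirms the stated compatibility. I do not anticipate a real obstacle: the substance is the Step~2 index matching with \eqref{FRT-definition}, after which Step~3 is the same computation read backwards.
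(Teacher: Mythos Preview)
Your argument is correct and is the standard direct verification; there are no gaps. The paper does not actually prove this proposition but simply cites it from \cite[Section 9.1.1, Proposition 2]{KS97}, so strictly speaking there is nothing to compare your approach against. Your Step~2 index computation matches \eqref{FRT-definition} exactly as written, which is the only place one could go wrong, and your Step~3 correctly uses the presentation of $\mathcal A$ by generators and relations to produce $\psi$.
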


The partial ordering $\preceq$ on ${\mathcal B}\times{\mathcal B}$ defined in Section~\ref{section quantum schubert cells} is useful for determining when certain coefficients of $\widehat{R}$ are nonzero. Observe first that $\widehat{R}$ is homogeneous with respect to the $Q$-grading on ${\mathcal S}\otimes{\mathcal S}$. Thus, if $R_{IJ}^{KL}$ is nonzero, then $wt(I)+wt(J)=wt(K)+wt(L)$. Secondly, each $\varphi_{ij}$ (and $\varphi_{ij}^\prime$) acts on a pure tensor $u_I\otimes u_J\in{\mathcal S}\otimes{\mathcal S}$ by either sending it to itself or to something of the form $u_I\otimes u_J+q^{-1}\hat{q}u_K\otimes u_L$, with $I<K$. Thus, we have the following proposition.
\begin{proposition}
If $R_{IJ}^{KL}\in k$ is nonzero, then  $(I,J)\preceq (K,L)$.
\end{proposition}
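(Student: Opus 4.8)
The plan is to track the support of $\widehat{R}$ through the two sources of combinatorial structure already assembled: the $Q$-grading and the lexicographic ``raising'' behavior of each $\varphi_{ij}$. First I would recall that $\widehat{R} = \tau\circ B\circ (\rho\otimes\rho)(\mathcal R)$, where $B$ is diagonal (hence does not move the tensor indices at all) and $\tau$ is the flip (which merely swaps the roles of the two factors, turning a coefficient of $u_L\otimes u_K$ into the stated $R_{IJ}^{KL}$). So the content of the proposition is really a statement about the matrix of $(\rho\otimes\rho)(\mathcal R)$ in the basis $\{u_I\otimes u_J\}$: if applying $(\rho\otimes\rho)(\mathcal R)$ to $u_I\otimes u_J$ produces a nonzero coefficient on $u_K\otimes u_L$, then $(I,J)\preceq(L,K)$ — and after the flip this reads $(I,J)\preceq(K,L)$ in the statement's convention. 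I would set up this bookkeeping carefully at the outset so the flip does not cause sign/index confusion later.

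Next I would establish the two ingredients separately. For the grading: $\widehat{R}$ is an automorphism of $\mathcal U_q(\mathfrak g_{\bf I'})$-modules (stated earlier), in particular it commutes with the $K_i$-action, so it is homogeneous for the $Q\times Q$-grading on $\mathcal S\otimes\mathcal S$; hence $R_{IJ}^{KL}\neq 0$ forces $wt(I)+wt(J)=wt(K)+wt(L)$, i.e. $(I,J)\sim(K,L)$. This is exactly the first of the two clauses in the definition of $\preceq$. For the ordering clause: I would argue that each factor $\varphi_{ij}$ (and $\varphi_{ij}'$) in the product $\mathcal R$, when applied via $\rho\otimes\rho$ to a pure tensor $u_I\otimes u_J$, either fixes it or sends it to $u_I\otimes u_J$ plus scalar multiples of tensors $u_K\otimes u_L$ with $I <_{} K$ in the poset $\mathcal B$ — this is precisely the claim made in the paragraph preceding the proposition, and it follows from the explicit formulas in Definition/Proposition~\ref{halfspin_defn}: $E_{ij}$ and $E_{ij}'$ act (up to scalars and up to annihilating) by the arrows of Figure~\ref{poset B}, which strictly increase height, and $\varphi_{ij}=\mathrm{exp}_q((1-q^{-2})E_{ij}\otimes E_{ji})$ applies $E_{ij}$ to the first factor (raising its height) while applying $E_{ji}$ to the second (lowering its height), the total weight being preserved. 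Composing finitely many such factors, the first tensor index can only move up (or stay), so any $u_K\otimes u_L$ appearing in $(\rho\otimes\rho)(\mathcal R)(u_I\otimes u_J)$ with nonzero coefficient has $I\le K$; combined with the weight constraint this gives $(I,J)\preceq(K,L)$ in the pre-flip convention, and one more application of the weight constraint handles the flip.

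The main obstacle I anticipate is not conceptual but a matter of being honest about the direction of the inequality after the flip $\tau$: one must check that ``$I\le K$ before the flip'' really is equivalent to the clause ``$I\le K$'' in the statement's $R_{IJ}^{KL}u_L\otimes u_K$ convention, and that the weight-matching relation $\sim$ is symmetric enough that no ordering is lost in the transposition. A secondary point requiring care is that $\varphi_{ij}$ acts by raising the first factor's height, whereas for $\varphi_{ij}'$ (built from $E_{ij}'$) one must confirm from the formulas $\rho(E_{ij}')(u_I\,v_iv_j) = (-q)^{c(i,j,I)}u_I$ etc. that the relevant arrows still point ``upward'' in $\mathcal B$ — i.e. that $E_{ij}'$ with $i<j$ raises height and $E_{ij}'$ with $i>j$ lowers it, consistently with the pairing in $\varphi_{ij}'$. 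Once these sign/direction checks are in place, the proof is a short composition argument: run the product $\mathcal R$ factor by factor, note each step preserves weight and weakly raises the first index, and conclude. I would keep the exposition to a few lines, citing Figure~\ref{poset B} and Definition/Proposition~\ref{halfspin_defn} for the per-factor behavior rather than re-deriving it.
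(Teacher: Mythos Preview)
Your proposal is correct and follows essentially the same approach as the paper: the paper's argument (given in the paragraph immediately preceding the proposition rather than in a separate proof environment) is precisely the two-step argument you describe, namely $Q$-homogeneity of $\widehat{R}$ to get $(I,J)\sim(K,L)$, followed by the observation that each factor $\varphi_{ij}$ and $\varphi_{ij}'$ acts on a pure tensor $u_I\otimes u_J$ as identity plus a term with strictly larger first index. Your additional bookkeeping about the flip $\tau$ and the direction checks for $\varphi_{ij}'$ are sensible precautions but do not depart from the paper's reasoning.
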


The formula giving the coefficient $R_{IJ}^{KL}\in k$ of the $R$-matrix depends on the size of the equivalence class of $(I,J)$ as well as the difference in heights of $(I,J)$ and $(K,L)$ with respect to the partial ordering $\preceq$ on ${\mathcal B}\times{\mathcal B}$. The following theorem can be verified by checking each case listed below.

\begin{theorem}\label{R-matrix coefficients}
If  $(I,J)\preceq (K,L)$, then
\begin{equation}
R_{IJ}^{KL} = \begin{cases} q^{\left<wt(I),wt(J)\right>},             &  (K,L)=(I,J),\\
\hat{q}(q-(-q)^{ht(I,J)-ht(K,L)+1}), & (K,L)=(J,I)\text{ and } |[(I,J)]|=8, \\
\hat{q}q, & (K,L)=(J,I)\text{ and } |[(I,J)]|=2, \\
\hat{q}(-q)^{ht(I,J)-ht(K,L)+1}, & otherwise. \end{cases}
\end{equation}
\end{theorem}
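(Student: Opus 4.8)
The plan is to read off the matrix of $\widehat{R}$ in the basis $\{u_I\otimes u_J\}_{I,J\in{\mathcal B}}$ directly from the factorization $\widehat{R}=\tau\circ B\circ(\rho\otimes\rho)({\mathcal R})$, and to organize the computation by the size of the equivalence class $[(I,J)]$. The first step is a normalization reduction. Since $\widehat{R}$ commutes with the $K_i$'s it preserves the $Q$-grading on ${\mathcal S}\otimes{\mathcal S}$; the weight space containing $u_I\otimes u_J$ is spanned by the vectors $u_K\otimes u_L$ with $(K,L)$ ranging over the equivalence class of $(I,J)$, and on that space $B$ is the single scalar $q^{\langle wt(I),wt(J)\rangle}$ (uniform over the class, because $\langle wt(K),wt(L)\rangle$ depends only on $K\cup L$ and $K\cap L$). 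As $\tau$ is the flip, writing $(\rho\otimes\rho)({\mathcal R})(u_I\otimes u_J)=\sum_{K,L}c_{KL}\,u_K\otimes u_L$ gives $R_{IJ}^{KL}=q^{\langle wt(I),wt(J)\rangle}\,c_{KL}$, so it suffices to compute the $c_{KL}$.

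For this, expand each factor of ${\mathcal R}$, noting that $\rho(X)^2=0$ for every root vector $X=E_{ij},E_{ij}^\prime$ ($i\neq j$) and that each $E_{ij}\otimes E_{ji}$, $E_{ij}^\prime\otimes E_{ji}^\prime$ is homogeneous of $Q$-degree $0$; hence every $\varphi_{ij}$ (resp. $\varphi_{ij}^\prime$) acts on a fixed basis vector $u_I\otimes u_J$ either as the identity or by adding a single new term $\pm q^{-1}\hat{q}\,u_K\otimes u_L$ with $(I,J)\prec(K,L)$, the sign and the target being determined by Definition/Proposition \ref{halfspin_defn} together with the straightening relations of $\Lambda_q$. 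Thus $c_{KL}$ is a finite sum over ``jump paths'': sequences of such elementary moves whose underlying factors appear in ${\mathcal R}$ in the order in which they are applied, a path of length $k$ contributing a monomial $\pm\hat{q}^{k}q^{\ast}$. The relevant combinatorics is that of the poset ${\mathcal B}\times{\mathcal B}$ from Section \ref{section quantum schubert cells}: a size-$1$ class supports no jumps, a size-$2$ class supports exactly one, and a size-$8$ class is a height-$7$ chain with a single diamond inserted at height $4$ (this is Table \ref{large classes}). One convenient remark is that, because $\hat q^{k}=\hat q\,(q-q^{-1})^{k-1}$, a sum of contributions of all lengths collapses to $\hat q$ times a Laurent polynomial in $q$, consistent with the shape of the claimed answer.

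The three cases of the theorem then match the three class sizes. Size $1$: only $(K,L)=(I,J)$ occurs and $R_{IJ}^{IJ}=q^{\langle wt(I),wt(J)\rangle}=q^2$. Size $2$: either $(K,L)=(I,J)$ (the identity path, giving $q^{\langle wt(I),wt(J)\rangle}=q$) or $(K,L)=(J,I)$ (the unique jump; a short check that the attached sign and $q$-power cancel yields $\hat q q$). Size $8$: one sums over all jump paths from $(I,J)$ up to $(K,L)$, which I would do by induction on $ht(K,L)-ht(I,J)$, splitting off the lowest jump and tracking how its sign and $q$-power combine with the inductively known contribution of the rest, the diamond at height $4$ producing the only branching. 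The outcome is that for $(K,L)$ strictly between $(I,J)$ and $(J,I)$ the sum telescopes to $\hat q(-q)^{ht(I,J)-ht(K,L)+1}$, while for $(K,L)=(J,I)$ an additional family of length-two paths through the ``long'' root vectors $E_{ij}^\prime$ (whose factors sit at the two ends of ${\mathcal R}$, thanks to the chosen reduced word for $w_0^{{\bf I^\prime}}$) supplies the extra $\hat q q$ term.

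The main obstacle is precisely this last verification: the honest accounting of signs, of powers of $q$ coming from the exponents in Definition/Proposition \ref{halfspin_defn} and from reordering in $\Lambda_q$, and of which jump paths are actually realized given the fixed order of factors in ${\mathcal R}$, followed by the proof that the alternating sums telescope. Two things make it manageable. First, by the uniformity of Lusztig's root vectors and of the relations of $\Lambda_q$, the computation inside a size-$8$ class depends only on its poset shape and on which $\varphi$-factors realize its covers, so it is enough to carry it out for one representative of each class size (the remaining cases of Table \ref{large classes} being mechanical). Second, there is a strong independent check: ${\mathcal S}$ being the $\varpi_2$ half-spin module of ${\mathcal U}_q(\mathfrak{so}_{10})$, one has ${\mathcal S}\otimes{\mathcal S}\cong V_{2\varpi_2}\oplus V_{\varpi_4}\oplus V_{\varpi_6}$ (dimensions $126$, $120$, $10$), on which $\widehat{R}$ acts by the scalars $q^2$, $-1$, $q^{-6}$; hence the matrix produced by the case analysis must, restricted to each equivalence-class weight space, be diagonalizable with spectrum contained in $\{q^2,-1,q^{-6}\}$, which both catches arithmetic slips and, in the size-$8$ case, reduces the number of entries that must be computed from scratch.
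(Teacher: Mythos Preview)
Your proposal is correct and takes essentially the same approach as the paper, which offers no proof beyond the single sentence ``The following theorem can be verified by checking each case listed below.'' Your write-up supplies exactly the organizing structure that such a verification needs---the reduction via the $Q$-grading and the uniform action of $B$ on a class, the ``jump path'' expansion of $(\rho\otimes\rho)({\mathcal R})$, and the split by class size---and the spectral sanity check against the decomposition ${\mathcal S}\otimes{\mathcal S}\cong V_{2\varpi_2}\oplus V_{\varpi_4}\oplus V_{\varpi_6}$ is a genuinely useful addition that the paper does not mention.

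One caution on your uniformity remark: the claim that the size-$8$ computation ``depends only on its poset shape and on which $\varphi$-factors realize its covers'' is exactly what Theorem~\ref{R-matrix coefficients} asserts, so invoking it to reduce to a single representative is circular unless you first argue it independently. Concretely, the ten size-$8$ classes in Table~\ref{large classes} involve different subsets of the twenty factors $\varphi_{ij},\varphi_{ij}^\prime$ sitting in different positions in the product ${\mathcal R}$, and the $(-q)$-exponents in Definition/Proposition~\ref{halfspin_defn} are not manifestly uniform across classes. Either give a short symmetry argument (e.g.\ a Weyl group or Dynkin automorphism argument that conjugates one class to another and commutes with the construction of ${\mathcal R}$), or simply do what the paper instructs and run the jump-path bookkeeping for each of the ten rows of Table~\ref{large classes}; both are mechanical.
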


\section{Main Results}\label{section_mainresults}

The following theorem gives the defining relations among any ``row" of generators of the FRT-bialgebra.

\begin{theorem}\label{FRT relations1}For every $S\in{\mathcal B}$, the subalgebra ${\mathcal A}_S\subseteq{\mathcal A}$ generated by $\{X_{SI}:I\in{\mathcal B}\}$ has the following defining relations:
\begin{enumerate}[(i)]
\item For every $I\not\geq J$,
\begin{equation}
X_{SI}X_{SJ} =q^{\left<wt(I),wt(J)\right>}X_{SJ}X_{SI}+\hat{q}\!\!\!\!\!\!\!\sum_{\tiny\begin{array}{c}(I,J)\prec (L,M),\\ M\leq_{lexico.}L\end{array}}\!\!\!\!\!\!\!(-q)^{ht(L,M)-ht(I,J)-1}X_{SL}X_{SM}.
\end{equation}
\item For every $1\leq i\leq 10$,
\begin{equation}
\label{10extra}X_{S,a_{i1}}X_{S,A_{i1}}-qX_{S,a_{i2}}X_{S,A_{i2}}+q^2X_{S,a_{i3}}X_{S,A_{i3}}-q^3X_{S,a_{i4}}X_{S,A_{i4}}=0,
\end{equation}
where  $(a_{ij},A_{ij})\in{\mathcal B}\times{\mathcal B}$ is in $i$-th row and $j$-th column of Table \ref{large classes}.
\end{enumerate}
\end{theorem}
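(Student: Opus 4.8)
The plan is to derive the relations in ${\mathcal A}_S$ directly from the FRT defining relations (\ref{FRT-definition}) by making a careful choice of the free indices $S,T,I,J$ and then invoking Theorem \ref{R-matrix coefficients} to evaluate the structure constants $R_{IJ}^{KL}$. The key observation is that the FRT relations (\ref{FRT-definition}) are indexed by a ``row pair'' $(S,T)$ together with a ``column pair'' $(I,J)$, and setting $T=S$ collapses the double sum to a single sum over the equivalence class $[(I,J)]\subseteq{\mathcal B}\times{\mathcal B}$, since $R_{KL}^{TS}=R_{KL}^{SS}$ is nonzero only when $(K,L)$ lies in the class of $(S,S)$ — but $(S,S)$ is its own equivalence class (the singleton classes are exactly those of the form $\{(I,I)\}$), so the left side of (\ref{FRT-definition}) reduces to a single term $R_{SS}^{SS}X_{SI}X_{SJ}=q^{\langle wt(S),wt(S)\rangle}X_{SI}X_{SJ}$, up to the reordering built into the index convention. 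Wait — more carefully: with $T=S$, the left-hand sum $\sum_{K,L}R_{KL}^{SS}X_{KI}X_{LJ}$ has $R_{KL}^{SS}\neq 0$ only for $(K,L)\preceq(S,S)$ with $(K,L)\sim(S,S)$, forcing $K=L=S$; so the left side is $q^{\langle wt(S),wt(S)\rangle}X_{SI}X_{SJ}$. The right side $\sum_{K,L}R_{IJ}^{KL}X_{SL}X_{SK}$ runs over all $(K,L)\succeq(I,J)$ in the class $[(I,J)]$, and each coefficient is read off from Theorem \ref{R-matrix coefficients}.

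The proof then splits according to $|[(I,J)]|$. First I would treat the generic case: when $I\not\geq J$ and $(I,J)$ is not in a size-$8$ class, i.e. $|[(I,J)]|\in\{1,2\}$. For $|[(I,J)]|=1$ the relation is vacuous ($I=J$). For $|[(I,J)]|=2$, the class is $\{(I,J),(J,I)\}$ with, say, $I<J$; the right side of (\ref{FRT-definition}) becomes $R_{IJ}^{IJ}X_{SJ}X_{SI}+R_{IJ}^{JI}X_{SI}X_{SJ} = q^{\langle wt(I),wt(J)\rangle}X_{SJ}X_{SI}+\hat q\,q\,X_{SI}X_{SJ}$; equating with the left side and noting $\langle wt(S),wt(S)\rangle=2$ so that $q^2 X_{SI}X_{SJ}-\hat q q X_{SI}X_{SJ}=q^2X_{SI}X_{SJ}-(q^2-1)X_{SI}X_{SJ}=X_{SI}X_{SJ}$... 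I need to check the normalization of $\hat q$, but after this bookkeeping one recovers exactly relation (i) with an empty sum $\sum_{(I,J)\prec(L,M)}$ (the condition $M\leq_{lexico.}L$ being automatic since the only term $(L,M)=(J,I)$ with $I<J$ fails $M\leq_{lexico.}L$ only if... here I'd cross-check against Theorem for ${\mathcal U}_q^+[w]$ to confirm the signs and the lexicographic restriction match). Next, for a size-$8$ class with $I\not\geq J$, the same computation now has seven terms on the right (all $(L,M)$ in the class with $(I,J)\prec(L,M)$), each with coefficient $\hat q(-q)^{ht(I,J)-ht(L,M)+1}$ except the ``transpose'' term $(L,M)$ with $wt$-pairing... — but the restriction $M\leq_{lexico.}L$ in relation (i) deletes precisely the height-$4$ ``mirror'' entries, which is where the two height-$4$ columns in Table \ref{large classes} come in, and this is exactly accounted for by the piecewise formula in Theorem \ref{R-matrix coefficients}. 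Finally, relation (ii): this comes from choosing $(I,J)$ in a size-$8$ class with $I>J$ — equivalently, taking the class-sum relation ``from the top'', where now the left side of (\ref{FRT-definition}) involves $X_{SI}X_{SJ}$ with $(I,J)$ maximal and the right side telescopes; the alternating-sign, four-term identity (\ref{10extra}) is the ``extra'' relation, one for each of the $10$ rows of Table \ref{large classes}, and I expect it to drop out as the $ht(I,J)-ht(K,L)=0$ consistency condition forced when $(I,J)$ is simultaneously near the top and the bottom of two overlapping height slots — concretely, by picking $(I,J)=(a_{i1},A_{i1})$ or the appropriate height-$4$ entry and comparing the two sides.

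The main obstacle I anticipate is \emph{not} any single case but the bookkeeping of signs, powers of $q$, and the height functions across the size-$8$ classes — in particular making the lexicographic condition $M\leq_{lexico.}L$ in relation (i) mesh exactly with the two height-$4$ columns of Table \ref{large classes}, and showing that relation (ii) is genuinely \emph{independent} of the relations (i) rather than a consequence of them (otherwise it should not be listed). For independence I would argue on the associated graded / $q\to 1$ level, or by a direct PBW/Gröbner-basis dimension count: relations (i) alone give an algebra with the Hilbert series of a polynomial ring in $16$ variables, whereas ${\mathcal A}_S$ — being a homomorphic image of ${\mathcal U}_q^+[w]$ modulo the ideal generated by the $10$-dimensional module ${\mathcal U}_q(\mathfrak{so}_{10}).\Theta$ (Theorem stated in the introduction) — is strictly smaller, and the ten relations (\ref{10extra}) are exactly (a spanning set of) that module's worth of quadratic relations, since $\Theta$ has weight $\varpi_6$ and its orbit is $10$-dimensional with the four-term alternating shape visible in the definition of $\Theta$. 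Thus the cleanest route is: (1) verify relations (i) and (ii) hold in ${\mathcal A}_S$ by the index-specialization above; (2) show they are defining by checking that the quotient of the free algebra by (i)+(ii) already surjects onto ${\mathcal A}_S$ and has the right size, using that ${\mathcal A}_S\cong{\mathcal U}_q^+[w]/\langle{\mathcal U}_q(\mathfrak{so}_{10}).\Theta\rangle$ together with the Levendorskii--Soibelman straightening rule to put everything in PBW order.
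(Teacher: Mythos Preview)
Your plan is the paper's: specialize the FRT relations (\ref{FRT-definition}) to $T=S$, note that the left side then collapses to $q^2X_{SI}X_{SJ}$, and split according to the size of the equivalence class $[(I,J)]$. The size-$1$ and size-$2$ cases go as you describe. Where your outline drifts from the paper is the size-$8$ case. Rather than separating into $I\not\geq J$ versus $I>J$ and trying to extract relation~(ii) ``from the top'' by some telescoping (your description here is muddled---at height~$7$ the right side of the specialized relation has a single term, and $(a_{i1},A_{i1})$ sits at height~$1$, not at the top), the paper collects all eight specialized FRT equations for a fixed size-$8$ class into a single matrix equation $(A-\operatorname{SkewDiag}(q^2)){\bf X}^t=0$, where $A$ is the explicit $8\times 8$ upper-triangular matrix of $R$-coefficients read off from Theorem~\ref{R-matrix coefficients}. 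This matrix has rank~$5$; row-reducing produces exactly five independent relations, namely the four commutation relations of type~(i) at heights $1,2,3,4$ (the two height-$4$ pairs $(a_4,A_4)$ and $(A_4,a_4)$ both satisfy $I\not\geq J$ but yield the \emph{same} relation, which is what makes your lexicographic worry dissolve) together with the single alternating-sum relation~(ii).

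The rank computation also disposes of your independence concern: it shows directly that (ii) is not in the span of the type-(i) relations among the eight quadratic monomials, so no Hilbert-series or Gr\"obner argument is needed. More to the point, your proposed route---invoking the isomorphism ${\mathcal A}_S\cong{\mathcal U}_q^+[w]/\langle{\mathcal U}_q(\mathfrak{so}_{10}).\Theta\rangle$---is circular: that isomorphism is the Corollary immediately following this theorem and is proved \emph{from} it. The paper's argument that (i)+(ii) are \emph{defining} rests instead on the observation (which you also make) that the only FRT relations involving row-$S$ variables alone are those with $T=S$, and the rank-$5$ reduction shows these to be equivalent to (i)+(ii).
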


\begin{proof}
In the defining relations of Eqn. \ref{FRT-definition}, the only instance where a product of variables, $X_{SI}X_{SJ}$, from the same ``row" appears (with nonzero coefficients) are in the equations
\[q^2X_{SI}X_{SJ}=\sum_{L,M\in{\mathcal B}}R_{IJ}^{ML}X_{SL}X_{SM}.\] The types of relations involving a pair of variables, $X_{SI}$ and $X_{SJ}$, depends on the size of the equivalence class of $(I,J)$, which could be $1$, $2$, or $8$.  We consider these three cases separately. First of all, if $|[(I,J)]|=1$ then $I=J$ and the only equation obtained involving the product $X_{SI}X_{SJ}$ ($=X_{SJ}^2$) is $q^2X_{SJ}^2=q^2X_{SJ}^2$, which is superfluous. On the other hand, if $I$ and $J$ are chosen so that $|[(I,J)]|=2$, then $I$ and $J$ are comparable. Thus, without loss of generality suppose $I<J$. In this case, the only relations involving the variables $X_{SI}$ and $X_{SJ}$ are $q^2X_{SI}X_{SJ}=qX_{SJ}X_{SI}+q\hat{q}X_{SI}X_{SJ}$ and $q^2X_{SJ}X_{SI}=qX_{SI}X_{SJ}$. However, these two equations are equivalent to the single identity $X_{SI}X_{SJ}=qX_{SJ}X_{SI}$. Finally, if $I$ and $J$ are chosen so that $|[(I,J)]|=8$, then there are potentially up to eight defining relations involving the variables $X_{SI}$ and $X_{SJ}$. Let $\{(a_1,A_1),...,(a_4,A_4),(A_1,a_1),...,(A_4,a_4)\}$ denote of equivalence class of $(I,J)$, where $(a_i,A_i)$ ($i=1,...,4$) has height $i$. Thus, $(I,J)$ is among one of the eight elements listed. There are eight defining relations involving the variables $X_{S,a_1},..,X_{S,a_4},X_{S,A_1},...,X_{S,A_4}$, which can be summarized by the matrix equation $(A-SkewDiag(q^2)){\bf X}^t=0$, where $SkewDiag(q^2)$ is the $8\times 8$ matrix with entry $q^2$ everywhere along the skew-diagonal and $0$'s elsewhere,
\[
{\bf X}:=(X_{S,A_1}X_{S,a_1},...,X_{S,A_4}X_{S,a_4},X_{S,a_4}X_{S,A_4},...,X_{S,a_1}X_{S,A_1}),
\]
and
\begin{equation*}
A:=\left(\begin{array}{cccccccc} 1& \hat{q}& -\hat{q}q^{-1}& \hat{q}q^{-2}& \hat{q}q^{-2}& -\hat{q}q^{-3}& \hat{q}q^{-4}&   \hat{q}(q-q^{-5})   \\
0& 1& \hat{q}& -\hat{q}q^{-1}& -\hat{q}q^{-1}& \hat{q}q^{-2}&   \hat{q}(q-q^{-3}) & \hat{q}q^{-4} \\
0& 0& 1& \hat{q}& \hat{q}&   \hat{q}^2 & \hat{q}q^{-2}& -\hat{q}q^{-3}\\
0& 0& 0& 1& 0& 1& -\hat{q}q^{-1}& \hat{q}q^{-2}\\
0& 0& 0& 0& 1& \hat{q}& -\hat{q}q^{-1}&\hat{q}q^{-2}\\
0& 0& 0& 0&0& 1& \hat{q}& -\hat{q}q^{-1}\\
0& 0& 0& 0& 0& 0& 1& \hat{q}\\
0& 0& 0& 0& 0& 0& 0& 1
\end{array}\right)\!\!\! \begin{array}{c}\\ \\ \\ \\ \\ \\ \\ \\ \cdot\end{array}
\end{equation*}
The entries in $A$ are obtained by reading off the appropriate coefficients of $\widehat{R}$, which are given in Thm. \ref{R-matrix coefficients}. The matrix $A-SkewDiag(q^2)$ has rank five. Hence, the system of eight equations given by $(A-SkewDiag(q^2)){\bf X}^t=0$ will reduce to an equivalent system of five equations. In our case, the five equations are
\begin{align*}
&X_{S,a_1}X_{S,A_1}=X_{S,A_1}X_{S,a_1} +\hat{q}(X_{S,a_2}X_{S,A_2}-qX_{S,a_3}X_{S,A_3}+q^2X_{S,a_4}X_{S,A_4}),\\
&X_{S,a_2}X_{S,A_2}=X_{S,A_2}X_{S,a_2}+\hat{q}(X_{S,a_3}X_{S,A_3}-qX_{S,a_4}X_{S,A_4}),\\
&X_{S,a_3}X_{S,A_3}=X_{S,A_3}X_{S,a_3}+\hat{q}X_{S,a_4}X_{S,A_4},\\
&X_{S,a_4}X_{S,A_4}=X_{S,A_4}X_{S,a_4},\\
&X_{S,a_1}X_{S,A_1}-qX_{S,a_2}X_{S,A_2}+q^2X_{S,a_3}X_{S,A_3}-q^3X_{S,a_4}X_{S,A_4}=0.
\end{align*}
The relations arising from the three cases considered account for all commutation relations and can be succinctly written as they are in the statement of this theorem.
\end{proof}

Our first main result is the following corollary of Theorem \ref{FRT relations1}. Each ``row" of ${\mathcal A}$ is isomorphic to a quotient of ${\mathcal U}_q^+[w]$.

\begin{corollary}
For every $S\in{\mathcal B}$, there is a surjective algebra homomorphism
\begin{align*}
\psi_S:  {\mathcal U}_q^+[w]    &\longrightarrow {\mathcal A}_S \\
 Y_I  &\longmapsto X_{SI} ,
\end{align*}
with kernel $\left<ad({\mathcal U}_q(\mathfrak{g}_{{\bf I^\prime}}))\Theta\right>$.
\end{corollary}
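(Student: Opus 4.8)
\medskip
\noindent\emph{Proof idea.} The plan is to compare presentations. The subalgebra ${\mathcal A}_S$ has an explicit presentation by Theorem~\ref{FRT relations1}: generators $\{X_{SI}:I\in{\mathcal B}\}$ modulo the relations~(i) together with the ten relations~\eqref{10extra}. The relations~(i) are \emph{verbatim} the defining relations of ${\mathcal U}_q^+[w]$ with each $Y_I$ replaced by $X_{SI}$, so $Y_I\mapsto X_{SI}$ extends to an algebra homomorphism $\psi_S\colon{\mathcal U}_q^+[w]\to{\mathcal A}_S$, which is surjective because $\{X_{SI}:I\in{\mathcal B}\}$ generates ${\mathcal A}_S$ by definition. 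Since the relations~(i) are exactly the defining relations of the source, it follows that $\ker\psi_S$ is the two-sided ideal generated by the preimages of the relations~\eqref{10extra}, i.e.\ by the degree-two elements
\[
\Theta_i:=Y_{a_{i1}}Y_{A_{i1}}-qY_{a_{i2}}Y_{A_{i2}}+q^2Y_{a_{i3}}Y_{A_{i3}}-q^3Y_{a_{i4}}Y_{A_{i4}}\qquad(i=1,\dots,10),
\]
where $(a_{ij},A_{ij})$ is the entry of Table~\ref{large classes} in row $i$, column $j$. Reading off the first row of Table~\ref{large classes} gives $\Theta_1=\Theta$, so $\ker\psi_S=\langle\Theta_1,\dots,\Theta_{10}\rangle$.

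It remains to prove the identity $\langle\Theta_1,\dots,\Theta_{10}\rangle=\langle ad({\mathcal U}_q(\mathfrak{g}_{{\bf I^\prime}})).\Theta\rangle$. Set $M:=ad({\mathcal U}_q(\mathfrak{g}_{{\bf I^\prime}})).\Theta$. By Proposition~\ref{HWVs2} and Theorem~\ref{Uqw decomposition} (the case $m=0$, $n=1$), $M$ is the $10$-dimensional irreducible ${\mathcal U}_q(\mathfrak{g}_{{\bf I^\prime}})$-module of highest weight $\varpi_6$ (the vector representation of $\mathfrak{so}_{10}$), sitting inside the degree-two component $\bigl({\mathcal U}_q^+[w]\bigr)_2$. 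Hence it suffices to show that $\{\Theta_1,\dots,\Theta_{10}\}$ is a $k$-basis of $M$, for then $\langle\Theta_1,\dots,\Theta_{10}\rangle=\langle M\rangle=\langle ad({\mathcal U}_q(\mathfrak{g}_{{\bf I^\prime}})).\Theta\rangle$. For membership, I would apply to $\Theta=\Theta_1$ a product $ad(F_{j_1})\cdots ad(F_{j_r})$ ($j_\ell\in{\bf I^\prime}$) along a saturated chain in the weight poset of the vector representation running from $\varpi_6$ down to the weight of $\Theta_i$; using the explicit action in Equation~\eqref{action on X_betas} of each $ad(F_j)$ on the root vectors $Y_I$, the module-algebra property of Theorem~\ref{action is defined}, and the coproduct $\Delta(F_j)=1\otimes F_j+F_j\otimes K_j$, one computes this iterated application term by term and checks that the output is a nonzero scalar multiple of $\Theta_i$. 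Nonvanishing is automatic: $M$ is irreducible and every weight of the vector representation occurs with multiplicity one, so each lowering step is an isomorphism on the relevant weight line; in particular $\Theta_i\in M$ and $\Theta_i\neq 0$. Finally, $\Theta_i$ is homogeneous of degree $wt(a_{ij})+wt(A_{ij})$ for the $\mathbb{Z}\{\varpi_i\}_{i\in{\bf I^\prime}}$-grading (a well-defined degree, since all pairs in the $i$-th equivalence class share the same weight sum), the ten equivalence classes of cardinality $8$ have pairwise distinct weight sums, and these sums differ by elements of $\mathrm{span}(e_1,\dots,e_5)$, which maps injectively onto the $\mathfrak{g}_{{\bf I^\prime}}$-weight lattice; hence the $\Theta_i$ have distinct $\mathfrak{g}_{{\bf I^\prime}}$-weights, so they are linearly independent, and there are exactly $\dim_k M=10$ of them. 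This gives the basis claim and completes the proof.

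The crux---and the only genuine computation---is the matching in the previous paragraph: showing that the iterated lowering operator applied to $\Theta$ reproduces, up to a nonzero scalar, the specific signed alternating sum $\Theta_i$ attached to the $i$-th row of Table~\ref{large classes}, \emph{including all the powers of $q$}. This is a bookkeeping argument that weaves together the clean formulas~\eqref{action on X_betas}, the $q$-twisted Leibniz rule coming from $\Delta(F_j)=1\otimes F_j+F_j\otimes K_j$, and the height function on the poset ${\mathcal B}\times{\mathcal B}$. The one delicate spot is the fork in the Dynkin diagram of $\mathfrak{g}_{{\bf I^\prime}}$ (nodes $2$ and $3$ both adjacent to node $4$), which is responsible for the two height-$4$ columns of Table~\ref{large classes} and is the only place where a weight of the vector representation is reached along two distinct chains; there a little extra care is needed to see that the contributions assemble into the asserted form of $\Theta_i$.
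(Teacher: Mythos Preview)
Your argument is correct and follows essentially the same route as the paper's proof: both compare the presentation of ${\mathcal A}_S$ from Theorem~\ref{FRT relations1} with that of ${\mathcal U}_q^+[w]$ to obtain the surjection, and both identify the kernel as the ideal generated by the ten degree-two elements coming from Eqn.~\eqref{10extra}, recognizing these as spanning the $10$-dimensional module $ad({\mathcal U}_q(\mathfrak{g}_{{\bf I^\prime}})).\Theta$. The paper's proof asserts this last identification in a single sentence, whereas you spell out the supporting details (distinct weights, irreducibility, the lowering computation); your remark linking the fork to the two height-$4$ columns of Table~\ref{large classes} is a slight conflation---those columns concern the internal structure of each equivalence class rather than the weight poset of the vector representation---but this does not affect the argument.
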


\begin{proof} The defining relations of ${\mathcal U}_q^+[w]$ and ${\mathcal A}_S$ show that $\psi_S$ defines a surjective homomorphism. The ten extra defining relations of ${\mathcal A}_S$ given in Eqn. \ref{10extra} imply that the ideal of ${\mathcal U}_q^+[w]$ generated by the ten-dimensional vector space $ad({\mathcal U}_q(\mathfrak{g}_{{\bf I^\prime}}))\Theta$ is the kernel of $\psi_S$.
\end{proof}

The following theorem gives the defining relations among certain pairs of rows of generators of ${\mathcal A}$.

\begin{theorem}\label{FRT relations2}For every $S,T\in{\mathcal B}$ such that $|(S\cup T)\backslash (S\cap T)|=2$ and $S<T$, the subalgebra ${\mathcal A}_{(S,T)}\subseteq{\mathcal A}$ generated by $\{X_{SI},X_{TI}:I\in{\mathcal B}\}$ has the following defining relations.
\begin{enumerate}[(i)]
\item Commutation relations among variables in the same row: For every $I,J\in{\mathcal B}$ such that $I\not\geq J$,
\begin{align}
&\label{row1 relations}X_{SI}X_{SJ} =q^{\left<wt(I),wt(J)\right>}X_{SJ}X_{SI}+\hat{q}\!\!\!\!\!\!\!\sum_{\tiny\begin{array}{c}(I,J)\prec (L,M),\\ M\leq_{lexico.}L\end{array}}\!\!\!\!\!\!\!(-q)^{ht(L,M)-ht(I,J)-1}X_{SL}X_{SM},\\
&\label{row2 realtions}X_{TI}X_{TJ} =q^{\left<wt(I),wt(J)\right>}X_{TJ}X_{TI}+\hat{q}\!\!\!\!\!\!\!\sum_{\tiny\begin{array}{c}(I,J)\prec (L,M),\\ M\leq_{lexico.}L\end{array}}\!\!\!\!\!\!\!(-q)^{ht(L,M)-ht(I,J)-1}X_{TL}X_{TM}.
\end{align}
\item Commutation relations among variables in different rows:  For every $I,J\in{\mathcal B}$,
\begin{equation}\label{tworows relations}
X_{SI}X_{TJ} =q^{\langle wt(I),wt(J)\rangle-1}X_{TJ}X_{SI}+\hat{q}\left(\sum_{(I,J)\prec (L,M)}\!\!\!\!\!\!\!(-q)^{ht(L,M)-ht(I,J)-1}X_{SL}X_{TM}+q^{-1}\epsilon(I,J)X_{SJ}X_{TI}\right),
\end{equation}
where $\epsilon(I,J)$ is defined in Eqn. \ref{epsilondefn}.
\item Other relations: For every $1\leq i\leq 10$,
\begin{align}
&\label{other1 relations}X_{S,a_{i1}}X_{S,A_{i1}}-qX_{S,a_{i2}}X_{S,A_{i2}}+q^2X_{S,a_{i3}}X_{S,A_{i3}}-q^3X_{S,a_{i4}}X_{S,A_{i4}}=0,\\
&\label{other2 relations}X_{T,a_{i1}}X_{T,A_{i1}}-qX_{T,a_{i2}}X_{T,A_{i2}}+q^2X_{T,a_{i3}}X_{T,A_{i3}}-q^3X_{T,a_{i4}}X_{T,A_{i4}}=0,\\
&\label{other3 relations}\sum_{1\leq j\leq 8}(-q)^{ht(a_{ij},A_{ij})}X_{S,a_{ij}}X_{T,A_{ij}}=0,
\end{align}
where  $(a_{ij},A_{ij})\in{\mathcal B}\times{\mathcal B}$ is in $i$-th row and $j$-th column of Table \ref{large classes}.
\end{enumerate}
\end{theorem}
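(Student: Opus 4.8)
The plan is to mimic the proof of Theorem~\ref{FRT relations1}, now keeping track of which of the two rows each generator belongs to. The first step is to isolate, among the defining relations~\eqref{FRT-definition} of $\mathcal A$, exactly those involving only the generators $X_{SI},X_{TI}$ ($I\in\mathcal B$). Since $\widehat R$ is homogeneous for the $Q$-grading, the relation indexed by a dummy pair $(s,t)$ involves only generators whose first index lies in $\{k:(k,l)\preceq(t,s)\text{ for some }l\}\cup\{s,t\}$; when $s,t\in\{S,T\}$ the hypothesis $|(S\cup T)\backslash(S\cap T)|=2$ forces $|[(t,s)]|\le 2$, so this index set is $\{S,T\}$. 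Hence the relevant FRT relations are precisely those with $(s,t)\in\{(S,S),(T,T),(S,T),(T,S)\}$. The pairs $(S,S)$ and $(T,T)$ reproduce verbatim the relations of Theorem~\ref{FRT relations1} for the rows $S$ and $T$, i.e.\ Eqns.~\eqref{row1 relations},~\eqref{row2 realtions},~\eqref{other1 relations},~\eqref{other2 relations}, so the remaining work is to analyze the pairs $(S,T)$ and $(T,S)$. (As in Theorem~\ref{FRT relations1} and its corollary, that the resulting list is a \emph{complete} set of defining relations is seen afterward by comparison with the presentation of $(\mathcal U_q^+[\widehat w])^\prime$, which is the content of the theorem constructing $\psi_{(S,T)}$.)

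For $(s,t)=(S,T)$, reading off the coefficients of Theorem~\ref{R-matrix coefficients} the left-hand side collapses to the two terms with $(k,l)\in\{(S,T),(T,S)\}$, with coefficients $\hat{q}q$ and $q$, so the relation reads $\hat{q}q\,X_{Si}X_{Tj}+q\,X_{Ti}X_{Sj}=\sum_{(i,j)\preceq(k,l)}R_{ij}^{kl}X_{Sl}X_{Tk}$; for $(s,t)=(T,S)$ the left-hand side collapses to $q\,X_{Si}X_{Tj}=\sum_{(i,j)\preceq(k,l)}R_{ij}^{kl}X_{Tl}X_{Sk}$. Now split on $|[(i,j)]|\in\{1,2,8\}$ exactly as in Theorem~\ref{FRT relations1}. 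For $|[(i,j)]|=1$ (so $i=j$) and $|[(i,j)]|=2$ (so $i,j$ comparable) each of the two displayed families contributes only one or two nontrivial instances; substituting the explicit coefficients from Theorem~\ref{R-matrix coefficients}, simplifying with $\hat{q}=q-q^{-1}$, and using that a class of size $\le 2$ leaves no room for the sum $\sum_{(i,j)\prec(L,M)}$ nor for a nonzero $\epsilon(i,j)$ outside the single configuration it records, one checks directly that these reduce to the corresponding instances of~\eqref{tworows relations}.

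The remaining case is $|[(i,j)]|=8$. Fix the $i$-th row $\{(a_1,A_1),\dots,(a_4,A_4),(A_4,a_4),\dots,(A_1,a_1)\}$ of Table~\ref{large classes}, ordered by height $1,\dots,8$. Letting $(i,j)$ range over these eight pairs and $(s,t)$ over $\{(S,T),(T,S)\}$ produces sixteen linear relations among the sixteen degree-two monomials $X_{S,\cdot}X_{T,\cdot}$ and $X_{T,\cdot}X_{S,\cdot}$ whose index pair lies in the class; the coefficient matrix is assembled from the four lines of Theorem~\ref{R-matrix coefficients}, the size-$8$ lines supplying the entries $\hat{q}(q-(-q)^{ht(i,j)-ht(k,l)+1})$ and $\hat{q}(-q)^{ht(i,j)-ht(k,l)+1}$. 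A rank computation entirely parallel to the reduction of $A-SkewDiag(q^2)$ in Theorem~\ref{FRT relations1}, but for this larger matrix with entries in $k$, shows the system has rank nine, and that a convenient row reduction is precisely the eight straightening relations~\eqref{tworows relations} attached to the pairs $(i,j)$ of the class together with the single linear dependence $\sum_{1\le j\le 8}(-q)^{ht(a_{ij},A_{ij})}X_{S,a_{ij}}X_{T,A_{ij}}=0$, i.e.\ Eqn.~\eqref{other3 relations}. Combining the outputs of the three cases with the $(S,S)$ and $(T,T)$ families gives exactly the relations (i)--(iii). The main obstacle is this last rank/row-reduction bookkeeping: it is the one point at which all coefficients of $\widehat R$ from Theorem~\ref{R-matrix coefficients} must be substituted and a moderately large matrix reduced, though it remains a finite, mechanical check modeled on the one already carried out in Theorem~\ref{FRT relations1}.
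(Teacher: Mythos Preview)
Your proposal is correct and follows essentially the same approach as the paper's proof: both defer the same-row relations to Theorem~\ref{FRT relations1}, isolate the two families of mixed-row FRT equations $\hat{q}qX_{SI}X_{TJ}+qX_{TI}X_{SJ}=\sum R_{IJ}^{KL}X_{SL}X_{TK}$ and $qX_{SI}X_{TJ}=\sum R_{IJ}^{KL}X_{TL}X_{SK}$, split on the size of the equivalence class $[(I,J)]$, and for the size-$8$ classes assemble the sixteen resulting equations into a matrix of rank nine whose reduced form is the eight instances of~\eqref{tworows relations} together with~\eqref{other3 relations}. Your explicit justification, via homogeneity and the hypothesis $|(S\cup T)\backslash(S\cap T)|=2$, that no other FRT relations involve only the generators of $\mathcal A_{(S,T)}$ is a point the paper leaves implicit.
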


\begin{proof} Theorem \ref{FRT relations1} gives the defining relations among any row of generators. Hence, it remains to verify the relations involving a product of a variable in the $S$-th row with a variable in the $T$-th row. The only instances such a product occurs with nonzero coefficient are in the equations
\begin{equation}\label{eqn1}q\hat{q}X_{SI}X_{TJ}+qX_{TI}X_{SJ}=\sum_{K,L} R_{IJ}^{KL}X_{SL}X_{TK} \end{equation}
and
\begin{equation}\label{eqn2} qX_{SI}X_{TJ}=\sum_{K,L}R_{IJ}^{KL}X_{TL}X_{SK} \end{equation}
for $I,J\in{\mathcal B}$. These equations simplify differently based on the size of the symmetric difference $(I\cup J)\backslash (I\cap J)$, which could be $0$, $2$, or $4$. We will consider these three cases separately.

First, if $|(I\cup J)\backslash (I\cap J)|=0$, then $I=J$, and the above equations simplify to $q\hat{q}X_{SI}X_{TI}+qX_{TI}X_{SI}=q^2X_{SI}X_{TI}$ and $qX_{SI}X_{TI}=q^2X_{TI}X_{SI}$. However, one of these equations is superfluous. In fact they reduce to the single relation $X_{SI}X_{TI}=qX_{TI}X_{SI}$.

If $|(I\cup J)\backslash (I\cap J)|=2$, then $I$ and $J$ are comparable. Without loss of generality suppose $I<J$. Thus,  Eqns. \ref{eqn1}  and \ref{eqn2} simplify to $q\hat{q}X_{SI}X_{TJ}+qX_{TI}X_{SJ}=qX_{SJ}X_{TI}+q\hat{q}X_{SI}X_{TJ}$ and $qX_{SI}X_{TJ}=qX_{TJ}X_{SI}+q\hat{q}X_{TI}X_{SJ}$ respectively. These in turn reduce to $X_{SJ}X_{TI}=X_{TI}X_{SJ}$ and $X_{SI}X_{TJ}=X_{TJ}X_{SI}+\hat{q}X_{SJ}X_{TI}$.

Finally, if $|(I\cup J)\backslash (I\cap J)|=4$, then $|[(I,J)]|=8$. Let $\{(a_1,A_1),...,(a_4,A_4),(A_1,a_1),...,(A_4,a_4)\}$ denote of equivalence class of $(I,J)$, where $(a_i,A_i)$ ($i=1,...,4$) has height $i$.  There are 16 defining relations involving of product of a variable from the $S$-th row, $X_{S,a_1},..,X_{S,a_4},X_{S,A_1},...,X_{S,A_4}$ with a variable from the $T$-th row, $X_{T,a_1},..,X_{T,a_4},X_{T,A_1},...,X_{T,A_4}$.  Half of these 16 defining relations come from Eqn. \ref{eqn1} and the other half come from Eqn. \ref{eqn2}. These 16 equations are summarized by the matrix equation
\[ \left(\begin{array}{cc} A-SkewDiag(q\hat{q}) & SkewDiag(-q) \\ SkewDiag(-q) & A \end{array}\right) ({\bf X_{ST}})^t =0, \]
where $SkewDiag(x)$  is the $8\times 8$ matrix with entry $x$ everywhere along the skew-diagonal and $0$'s elsewhere,
\[\resizebox{1\hsize}{!}{$ {\bf X_{ST}}:=(X_{S,A_1}X_{T,a_1},...,X_{S,A_4}X_{T,a_4},X_{S,A_4}X_{T,a_4},...,X_{S,A_1}X_{T,a_1},X_{T,A_1}X_{S,a_1},...,X_{T,A_4}X_{S,a_4},X_{T,a_4}X_{S,A_4},...,X_{T,a_1}X_{S,A_1}),$} \]
and $A$ is the same matrix in the proof of Theorem \ref{FRT relations1}. The matrix $\left(\begin{array}{cc} A-SkewDiag(q\hat{q}) & SkewDiag(-q) \\ SkewDiag(-q) & A \end{array}\right)$ has rank $9$. Therefore the system of 16 equations reduces an equivalent system of 9 equations. In our case, they reduce to the following:
\begin{enumerate}
\item $X_{T,a_1}X_{S,A_1} = qX_{S,A_1}X_{T,a_1}-qf_{ST}(A_1,a_1)$,
\item $X_{T,a_2}X_{S,A_2} = qX_{S,A_2}X_{T,a_2}-qf_{ST}(A_2,a_2)$,
\item $X_{T,a_3}X_{S,A_3} = qX_{S,A_3}X_{T,a_3}-qf_{ST}(A_3,a_3)$,
\item $X_{T,a_4}X_{S,A_4} = qX_{S,A_4}X_{T,a_4}-qf_{ST}(A_4,a_4)$,
\item $X_{T,A_4}X_{S,a_4} = qX_{S,a_4}X_{T,A_4}-qf_{ST}(a_4,A_4)$,
\item $X_{T,A_3}X_{S,a_3} = qX_{S,a_3}X_{T,A_3}-qf_{ST}(a_3,A_3)-\hat{q}X_{S,A_3}X_{T,a_3}$,
\item $X_{T,A_2}X_{S,a_2} = qX_{S,a_2}X_{T,A_2}-qf_{ST}(a_2,A_2)-\hat{q}X_{S,A_2}X_{T,a_2}$,
\item $X_{T,A_1}X_{S,a_1} = qX_{S,a_1}X_{T,A_1}-qf_{ST}(a_1,A_1)-\hat{q}X_{S,A_1}X_{T,a_1}$,
\item $\sum_{(x,y)\sim (I,J)} (-q)^{ht(x,y)}(x,y)=0$,
\end{enumerate}
where $f_{ST}(x,y):=\hat{q}\sum_{(x,y)\prec (L,M)}(-q)^{ht(L,M)-ht(x,y)-1}X_{SL}X_{TM}$.

All of the relations arising from the three cases just considered can be succinctly written as the relations stated in the theorem.
\end{proof}

The algebras ${\mathcal U}_q^+[\widehat{w}]$ and ${\mathcal A}_{(S,T)}$ apparently have very similar defining relations. We remark that the function on generators given by $Z_I\mapsto X_{SI}$, $Z_{I+\delta}\mapsto X_{TI}$, for $I\in{\mathcal B}$, does not lift to an algebra homomorphism ${\mathcal U}_q^+[\widehat{w}]\to{\mathcal A}_{(S,T)}$. However, if we replaced $X_{TJ}X_{SI}$ with $qX_{TJ}X_{SI}$ in Eqn. \ref{tworows relations}, this would indeed define an algebra homomorphism. In other words, \emph{twisting} the multiplication in ${\mathcal A}_{(S,T)}$ (or ${\mathcal U}_q^+[\widehat{w}]$) will yield a homomorphism.

We will use the $\mathbb{Z}\{\alpha_i\}_{i\in{\bf I_0}}$-grading on  ${\mathcal U}_q^+[\widehat{w}]$ to construct an algebra $({\mathcal U}_q^+[\widehat{w}])^\prime$ related to ${\mathcal U}_q^+[\widehat{w}]$, but with a slightly different multiplicative structure. Let $c:\mathbb{Z}\{\alpha_i\}_{i\in{\bf I_0}}\times\mathbb{Z}\{\alpha_i\}_{i\in{\bf I_0}}\to k$ be the bicharacter defined by
\begin{align}
\label{the-cocycle}c(\alpha_i,\alpha_j)=\begin{cases} q, & (i,j)=(0,1),\\ 1, &\text{otherwise}.\end{cases}
\end{align}
As a set $({\mathcal U}_q^+[\widehat{w}])^\prime=\{u^\prime:u\in{\mathcal U}_q^+[\widehat{w}]\}$.
We require the map $\zeta:{\mathcal U}_q^+[\widehat{w}]\to({\mathcal U}_q^+[\widehat{w}])^\prime$ defined by $u\mapsto u^\prime$ (for $u\in{\mathcal U}_q^+[\widehat{w}]$) to be an isomorphism of $k$-modules. This map induces a $\mathbb{Z}\{\alpha_i\}_{i\in{\bf I_0}}$-grading on $({\mathcal U}_q^+[\widehat{w}])^\prime$. Multiplication of homogeneous elements in $({\mathcal U}_q^+[\widehat{w}])^\prime$ is given by
\[  x^\prime y^\prime = c(\text{deg}(x),\text{deg}(y))(xy)^\prime.  \]

Our second main result is the following theorem which gives a description of certain pairs of rows of ${\mathcal A}$. Recall $\Omega_1,\dots,\Omega_{13}$ from Table~\ref{HWVs table}.

\begin{theorem}\label{surjective homomorphism} For every $S,T\in{\mathcal B}$ with $|(S\cup T)\backslash (S\cap T)|=2$ and $S<T$, there is a surjective algebra homomorphism $\psi_{(S,T)}:({\mathcal U}_q^+[\widehat{w}])^\prime\to{\mathcal A}_{(S,T)}$ given by
\begin{align*}
&(Z_I)^\prime\longmapsto X_{SI}, \\
&(Z_{I+\delta})^\prime\longmapsto X_{TI}.
\end{align*}
The kernel of $\psi_{(S,T)}$ is the ideal generated by three $10$-dimensional vector spaces:  $\left(ad({\mathcal U}_q(\mathfrak{g}_{{\bf I^\prime}}))\Omega_j\right)^\prime$ for $j=3,4,5$.
\end{theorem}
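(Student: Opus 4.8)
The plan is to build on the two preceding structural theorems (Theorems~\ref{FRT relations1} and~\ref{FRT relations2}) in exactly the way the first main corollary was obtained from Theorem~\ref{FRT relations1}. First I would verify that $\psi_{(S,T)}$ is a well-defined algebra homomorphism. This amounts to checking that the images $X_{SI}$, $X_{TI}$ satisfy the defining relations of $({\mathcal U}_q^+[\widehat{w}])^\prime$ listed in the two equations of the theorem describing ${\mathcal U}_q^+[\widehat{w}]$, after the cocycle twist by $c$ is applied. The relations \ref{Uqw_hat relns1} among $Z_{I+n\delta}$ for fixed $n$ are unaffected by the twist (since $c$ is trivial on any pair of degrees lying in a single copy of $\Phi_w$ or $\Phi_w+\delta$), and they map precisely onto the intra-row relations \ref{row1 relations} and \ref{row2 realtions} of Theorem~\ref{FRT relations2}; this is the same verification that proved the first corollary, applied twice. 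For the mixed relation $Z_IZ_{J+\delta}=\cdots$, the twisting cocycle multiplies the leading term $q^{\langle wt(I),wt(J)\rangle}Z_{J+\delta}Z_I$ by the factor $c(\deg Z_I,\deg Z_{J+\delta})$, which by \eqref{the-cocycle} and the description $\deg Z_{I+\delta}=wt(I)+\delta$, $\alpha_0=-\theta+\delta$, produces exactly the extra power $q^{-1}$ appearing in front of $X_{TJ}X_{SI}$ in Eqn.~\ref{tworows relations}; the remaining summands and the $\epsilon(I,J)$-term match verbatim. So the defining relations of $({\mathcal U}_q^+[\widehat{w}])^\prime$ are carried into the relations of Theorem~\ref{FRT relations2}(i)--(ii), and surjectivity is immediate since the $X_{SI}$ and $X_{TI}$ generate ${\mathcal A}_{(S,T)}$.

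Next I would identify the kernel. By Theorem~\ref{FRT relations2}, ${\mathcal A}_{(S,T)}$ is presented by the images of the $({\mathcal U}_q^+[\widehat{w}])^\prime$-relations together with the ``other relations'' \ref{other1 relations}, \ref{other2 relations}, \ref{other3 relations}. Pulling these back along $\psi_{(S,T)}$: relation \ref{other1 relations} is the image of $(\Omega_3)^\prime$ set to zero (recall $\Theta$, hence $\Omega_3$, is the weight-$\varpi_6$ highest weight vector built from $Z_I$'s exactly as in \ref{other1 relations} with $i$ ranging over the rows of Table~\ref{large classes} after acting by $ad({\mathcal U}_q(\mathfrak{g}_{{\bf I^\prime}}))$); relation \ref{other2 relations} is likewise the image of $(\Omega_5)^\prime$; and relation \ref{other3 relations} is the image of $(\Omega_4)^\prime$, since $\Omega_4=\sum_{(I,J)\sim(1234,\emptyset)}(-q)^{ht(I,J)}Z_IZ_{J+\delta}$ has precisely the form $\sum_{1\le j\le 8}(-q)^{ht(a_{ij},A_{ij})}X_{S,a_{ij}}X_{T,A_{ij}}$ under $\psi_{(S,T)}$. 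Since $\Omega_3,\Omega_4,\Omega_5$ are highest weight vectors and $\psi_{(S,T)}$ is a morphism of $ad({\mathcal U}_q(\mathfrak{g}_{{\bf I^\prime}}))$-module algebras (the adjoint action on $({\mathcal U}_q^+[\widehat{w}])^\prime$ corresponds to the comodule structure on the two rows, by Theorem~\ref{action is defined} and the way the $X_{SI}$ transform), the full $10$-dimensional spaces $(ad({\mathcal U}_q(\mathfrak{g}_{{\bf I^\prime}}))\Omega_j)^\prime$ are killed, and conversely any relation of ${\mathcal A}_{(S,T)}$ beyond those of $({\mathcal U}_q^+[\widehat{w}])^\prime$ is a consequence of \ref{other1 relations}--\ref{other3 relations}, i.e. lies in the ideal they generate. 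Hence $\ker\psi_{(S,T)}=\langle(ad\,{\mathcal U}_q(\mathfrak{g}_{{\bf I^\prime}})\,\Omega_3)^\prime,(ad\,{\mathcal U}_q(\mathfrak{g}_{{\bf I^\prime}})\,\Omega_4)^\prime,(ad\,{\mathcal U}_q(\mathfrak{g}_{{\bf I^\prime}})\,\Omega_5)^\prime\rangle$.

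The main obstacle I anticipate is the bookkeeping in the mixed-row case: one must confirm that the rank-$9$ reduction of the $16\times 16$ system in the proof of Theorem~\ref{FRT relations2} produces \emph{exactly} the twisted $({\mathcal U}_q^+[\widehat{w}])^\prime$-relations plus exactly the three ten-dimensional obstruction families, with no extra independent relation and no relation that fails to lie in the stated ideal. Equivalently, one needs that the extra relations of ${\mathcal A}_{(S,T)}$ over ${\mathcal A}_S\ast{\mathcal A}_T$-type data are generated by the single vectors $\Omega_3,\Omega_4,\Omega_5$ under the adjoint action; this is the analogue, in the affine $E_6^{(1)}$ setting, of the role played by the weight-$0$ highest weight vectors $\Omega_1,\Omega_2,\Upsilon$ in the $D_n^{(1)}$ case of \cite{JN}. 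I would handle it by matching, class by class, the equivalence classes of size $8$ in Table~\ref{large classes} against the $ad({\mathcal U}_q(\mathfrak{g}_{{\bf I^\prime}}))$-orbits of $\Omega_3$, $\Omega_4$, $\Omega_5$ (each a copy of the $10$-dimensional vector representation, by the remark following Theorem~\ref{surjective homomorphism}), using Figure~\ref{poset B} to track how the $F_i$'s move through the poset ${\mathcal B}$, and then invoking the already-established presentation of ${\mathcal A}_{(S,T)}$ to conclude the ideal descriptions coincide.
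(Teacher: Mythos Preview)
The paper does not include a proof of this theorem; the statement of Theorem~\ref{surjective homomorphism} is immediately followed by the bibliography. Your proposal is therefore not being compared against an existing argument, but it is precisely the argument the paper sets up: it mirrors the deduction of the corollary from Theorem~\ref{FRT relations1}, now with Theorem~\ref{FRT relations2} supplying the presentation of ${\mathcal A}_{(S,T)}$, and the cocycle $c$ of \eqref{the-cocycle} accounting for the discrepancy between the $Z_IZ_{J+\delta}$ relation and Eqn.~\ref{tworows relations}. The identification of the thirty ``other relations'' \eqref{other1 relations}--\eqref{other3 relations} with the three ten-dimensional orbits $(ad\,{\mathcal U}_q(\mathfrak g_{\mathbf I'})\Omega_j)'$, $j=3,5,4$, is exactly right, and the rank computations you worry about are already carried out in the proof of Theorem~\ref{FRT relations2}, so no further reduction is needed here.

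One small correction in your bookkeeping: the factor you want on the leading term $Z_{J+\delta}Z_I$ is $c(\deg Z_{J+\delta},\deg Z_I)^{-1}$, not $c(\deg Z_I,\deg Z_{J+\delta})$. Since $\deg Z_I=wt(I)$ has $\alpha_0$-coefficient $0$ and $\alpha_1$-coefficient $1$, while $\deg Z_{J+\delta}=wt(J)+\delta$ has $\alpha_0$-coefficient $1$, one gets $c(\deg Z_I,\deg Z_{J+\delta})=1$ but $c(\deg Z_{J+\delta},\deg Z_I)=q$; dividing through, the coefficient of $(Z_{J+\delta})'(Z_I)'$ becomes $q^{\langle wt(I),wt(J)\rangle-1}$, matching \eqref{tworows relations}. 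This same computation shows the terms $Z_LZ_{M+\delta}$ and $Z_JZ_{I+\delta}$ on the right are untwisted relative to the left-hand side, so the rest of the relation transfers verbatim, as you claim.
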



\begin{thebibliography}{20}
\bibitem{AST91}M. Artin, W.F. Schelter, J. Tate, \textit{Quantum deformations of $GL_n$}, Commun. Pure Appl. Math, 44 (1991), 879-895.
\bibitem{B1} J. Beck, \emph{Braid group action and quantum affine algebras}, Comm. Math. Phys., {\bf 165}, (1994), 555-568.
\bibitem{B2} J. Beck, \emph{Convex bases of PBW type for quantum affine algebras}, Comm. Math. Phys., {\bf 165}, (1994), 193-199.
\bibitem{BKL} J. Bell, K. Casteels, and S. Launois, \emph{Primitive ideals in quantum Schubert cells: dimension of the strata}, preprint arXiv:1009.1347.
\bibitem{BZ} A. Berenstein and S. Zwicknagl, \emph{Braided symmetric and exterior algebras}, Trans. Amer. Math. Soc., {\bf 360}, (2008), 3429-3472.
\bibitem{BG} K.A Brown and K.R. Goodearl, \emph{Lectures on Algebraic Quantum Groups}, Advanced Courses in Mathematics CRM Barcelona, Birkh\"auser, Basel, 2002.
\bibitem{CP} V. Chari and A. Pressley, \emph{A Guide to Quantum Groups}, Cambridge University Press, New York, 1994.
\bibitem{DKP} C. De Concini, V. Kac, and C. Procesi, \emph{Some quantum analogues of solvable Lie groups}, In: Geometry and analysis (Bombay, 1992), pp. 41-65, Tata Inst. Fund. Res., Bombay, 1995.
\bibitem{Dr} V.G. Drinfel'd, \emph{Quantum Groups}, in: Proc. ICM (1986), 798-820.
\bibitem{FRT88} L.D. Faddeev, N. Yu Reshetikhin, and L. A. Takhtajan,\textit{Quantization of Lie groups and Lie algebras}, Algebraic Analysis, Vol. I, Academic Press, Boston, MA (1988), 129-139.
\bibitem{GLS} C. Gei\ss, B. Leclerc, and J. Schr\"oer, Cluster structures on quantized coordinate rings, preprint,
arXiv:1104.0531.
\bibitem{GL} K.R. Goodearl and E.S. Letzter, \emph{The Dixmier-Moeglin equivalence in quantum coordinate rings and quantized Weyl algebras}, Trans. Amer. Math. Soc., {\bf 352}, (2000), no. 3, 1381-1403.
\bibitem{GY} K.R. Goodearl and M. Yakimov, \emph{Poisson structures on affine spaces and flag varieties. II.}, Trans. Amer. Math. Soc., {\bf 361}, (2009), 5753-5780.
\bibitem{JN} G. Johnson and C. Nowlin, \emph{The FRT-construction via quantum affine algebras and smash products}, J. of Algebra, {\bf 353}, (2012), 158-173.
\bibitem{KS97} A. Klimyk and K. Schm\"udgen, \emph{Quantum Groups and their Representations}, Springer-Verlag, Berlin, 1997.
\bibitem{L} G. Lusztig, \emph{Introduction to quantum groups}, Progr. Math. 220, Birkh\"auser, 1993.
\bibitem{LS} S.Z. Levendorskii and Yan Soibelmann, \emph{Algebras of functions on compact quantum groups, Schubert cells and quantum tori}, Comm. Math. Phys., {\bf 139}, (1991), 141-170.
\bibitem{Y} M. Yakimov, \emph{Invariant prime ideals in quantizations of nilpotent Lie algebras}, Proc. London Math. Soc. (3) {\bf 101} (2010), no. 2, 454-476.
\bibitem{Y2} M. Yakimov, \emph{Spectra and catenarity of multiparameter quantum Schubert cells}, preprint arXiv:1112.2235.
\bibitem{Y3} M. Yakimov, \emph{Strata of prime ideals of De Concini-Kac-Procesi algebras and Poisson geometry}, In:
New trends in noncommutative algebra, Eds: P. Ara, K. A. Brown, T. H. Lenagan, E. S. Letzter, J.
T. Stafford, and J. J. Zhang, Contem. Math., {\bf 562}, (2012), Amer. Math. Soc., 265-278.
\bibitem{Z} S. Zwicknagl, \emph{R-Matrix Poisson algebras and their deformations}, Adv. Math., {\bf 220}, (2009), 1-58.
\end{thebibliography}
\end{document}